\def\Rs{{\mathbb{R} ^3}}
\def\Rp{{{\rm I\! R}^2}}
\def\N{{\rm I\! N}}
\def\o{{\omega}} 
\def\O{{\Omega}} 
\def\a{{\alpha}}
\def\b{{\beta}}
\def\g{{\gamma}}
\def\G{{\Gamma}}
\def\d{{\delta}}
\def\D{{\Delta}}
\def\q{{\theta}}
\def\r{{\rho}}
\def\s{{\sigma}}
\def\S{{\Sigma}}
\def\t{{\tau}}
\def\R{{\mathbb{R}}}
\newtheorem{thm}{Theorem}[section]
\newtheorem{lm}[thm]{Lemma}
\newtheorem{co}[thm]{Corollary}
\newtheorem{rmk}[thm]{Remark}
\newtheorem{ex}[thm]{Example}
\newcommand{\squeezelist}{\setlength{\itemsep}{0pt}}
\newenvironment{customthm}[1]
  {\innercustomthm}
  {\endinnercustomthm}
\begin{document}

\title{\bf Tailoring for Every Body:\\
Reshaping Convex Polyhedra}

\author{Joseph O'Rourke and Costin V\^\i lcu}

\date{\today}

\maketitle


\begin{abstract}
Given any two convex polyhedra $P$ and $Q$, we prove as one of our main results
that the surface
of $P$ can be reshaped to a homothet of $Q$ by a finite sequence of ``tailoring" steps.
Each tailoring excises a digon surrounding a single vertex and sutures the digon closed.
One phrasing of this result is that, if $Q$ can be ``sculpted" from $P$ by 
a series of slices with planes, then $Q$ can be tailored from $P$.
And there is a sense in which tailoring is finer than sculpting in that $P$
may be tailored to polyhedra that are not achievable by sculpting $P$.
It is an easy corollary 
that, if $S$ is the surface of any convex body,
then any convex polyhedron $P$ may be tailored to approximate a homothet of $S$ as closely as desired.
So $P$ can be ``whittled" to e.g., a sphere $S$.

Another main result achieves the same reshaping, but by excising more complicated shapes we call ``crests," still each enclosing one vertex.
Reversing either digon-tailoring or crest-tailoring leads to proofs that any $Q$ inside $P$ can be enlarged to $P$ by cutting $Q$ and inserting and sealing surface patches.

One surprising corollary of these results is that, for $Q \subset P$, we can
cut-up $Q$ into pieces and paste them non-overlapping onto
an isometric subset of $P$. This can be viewed as a form of ``unfolding" $Q$ onto $P$.

All our proofs are constructive, and lead to polynomial-time algorithms.

\end{abstract}


\section{Introduction}
\label{secIntroduction}

Let $P$ and $Q$ be convex polyhedra, each the convex hull of finitely many points in $\Rs$.
If $Q \subset P$, it is easy to see that $Q$ can be \emph{sculpted} from $P$ by ``slicing $Q$ with planes." 
By this we mean intersecting $Q$ with half-spaces each of whose plane boundary contains a face of $Q$.
If $Q \not\subset P$, we can shrink $Q$ until it fits inside.
So a \emph{homothet} of any given $Q$ can be sculpted from any given $P$, where a homothet is a copy possibly scaled, rotated, and translated.
Main results of this paper (Theorems~\ref{thmMainTailoring}, \ref{thmTailorNoSculpting}, \ref{thmCrestTailoring})
are similar claims but via ``tailorings": a homothet of any given $Q$ can be tailored from any given $P$.

With some abuse of notation, we will use the same symbol $P$ for a polyhedral hull and its boundary.
We define two types of tailoring.
A \emph{digon-tailoring} cuts off a single vertex of $P$ along a digon, and then sutures the digon closed.
A \emph{digon} is a subset of $P$ bounded by two equal-length geodesic segments that share endpoints; see Fig.~\ref{Tetra_3D}.
A \emph{geodesic segment} is a shortest geodesic between its endpoints.
A \emph{crest-tailoring} cuts off a single vertex of $P$ but via a more complicated shape we call a ``crest."
Again the hole is sutured closed.
We defer discussion of crests to Section~\ref{secCrests}.
Meanwhile, we shorten ``digon-tailoring" to simply \emph{tailoring}.
\begin{figure}
\centering
 \includegraphics[width=0.4\textwidth]{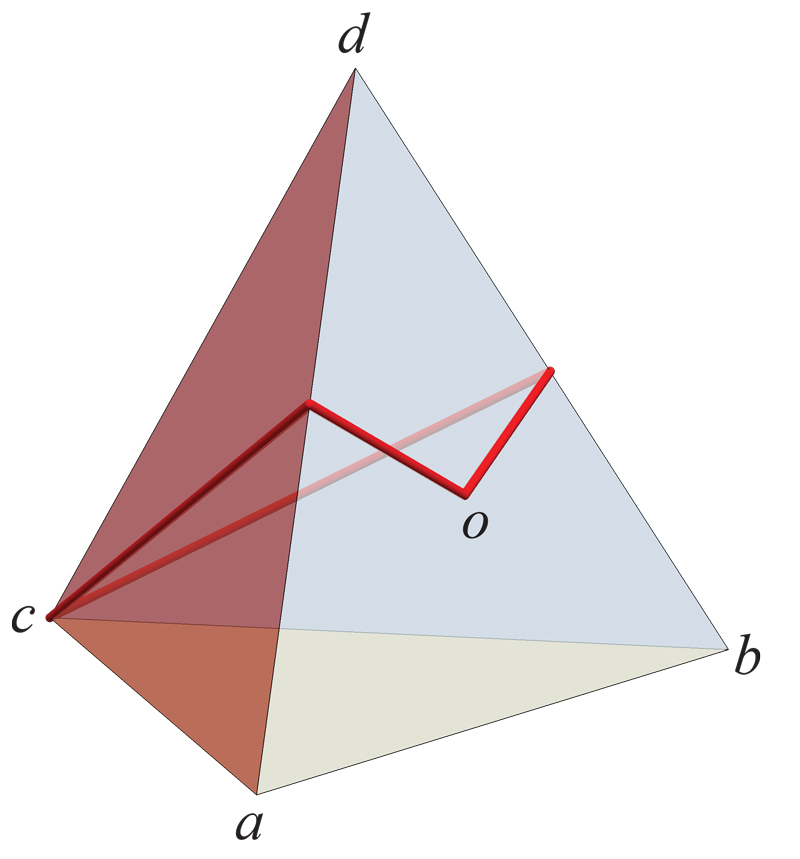}
\caption{A digon $D(c,o)$ on the regular tetrahedron $R=abcd$, surrounding vertex $d$.}
\label{fig1}\label{Tetra_3D}
\end{figure}

Cutting out a digon means excising the portion of the surface between the geodesics, including the vertex they surround.\footnote{
An informal view (due to Anna Lubiw) is that one could pinch the surface flat in a neighborhood of the vertex, and then snip-off the flattened vertex with scissors.}
Once removed, the digon hole is closed by naturally identifying the two geodesics along their lengths.
This identification is often called ``gluing" in the literature, although we also call it ``suturing" or ``sealing." 

Throughout, we make extensive use of Alexandrov's Gluing Theorem~\cite[p.100]{code2},
which guarantees that the surface obtained after a tailoring of $P$
corresponds uniquely to a convex polyhedron $P'$.
A precise statement of this theorem, which we will abbreviate to AGT, is as follows.

\begin{customthm}{AGT}
\label{AGT} 
Let $S$ be a topological sphere obtained by gluing planar polygons (i.e., naturally identifying pairs of sides of the same length) such that 
at most $2\pi$ surface angle is glued at any point.
Then $S$, endowed with the intrinsic metric induced by the distance in $\Rp$, is isometric to a convex polyhedron $P\subset\Rs$, 
possibly degenerated to a doubly covered convex polygon. 
Moreover, $P$ is unique up to rigid motion and reflection in $\Rs$. 
\end{customthm}

Because the sides of the digon are geodesics, gluing them together to seal the hole leaves $2\pi$ angle at all but the digon endpoints.
The endpoints lose surface angle with the excision, and so have strictly less than $2\pi$ angle surrounding them.
So AGT applies and yields a new convex polyhedron.

This shows that tailoring is possible and alters the given $P$ to another convex polyhedron.
How to ``aim" the tailoring to a given target $Q$ is a long story, told in subsequent sections.

AGT is a fundamental tool in the geometry of convex surfaces and, at a theoretical level, our paper helps to elucidate its implications.
While AGT, 
and its particular form of ``vertex merging" (discussed in Section~\ref{secVertexMerging}) has proved useful in several investigations,
the inverse problem we treat here has, to our knowledge, never been considered before as the central object of study.

One remark concerning AGT.
Alexandrov's proof of his celebrated theorem is a difficult existence proof and gives little hint of the structure of the polyhedron guaranteed by the theorem. 
And as-yet there is no effective procedure to construct the three-dimensional shape of the polyhedron guaranteed by his theorem.
There are numerical approximations (see~\cite{JOR07}), but realistically, only small or highly symmetric examples can be reconstructed, e.g., Fig.~\ref{figPentahedron} (ahead).


\subsection{Examples}

Before discussing background context, we present several examples.
Throughout we let $xy$ denote the line segment between points $x$ and $y$, $x,y \in \Rs$.
Also we make extensive use of vertex curvature.
The {\it discrete} (or {\it singular}) {\it curvature} $\o(v)$ at a vertex $v \in P$ is the angle deficit: $2\pi$ minus the sum of the face angles incident to $v$.

\begin{ex}
\label{tetra-deg}
Let $R=abcd$ be a regular tetrahedron, and let $o$ be the center of the face $abd$.
Cut out the digon on $R$ between $c$ and $o$ ``encircling" $d$, and zip it closed.

The unfolding $T$ of $R$ with respect to $c$ is a planar regular triangle $c_a c_b c_d$ with center $o$.
Cutting out that digon from $R$ is equivalent to removing from $T$ the isosceles triangle $o c_a c_b$. See Fig.~\ref{figTetraTailoring}(a).
We zip it closed by identifying the digon-segments $c_a o$ and $c_b o$, and refolding the remainder of $T$ by re-identifying $c_a b$ and $b c_d$,  and $c_b a$ and $a c_d$.
One can easily see that the result is the doubly covered kite $K=aob c_d$, shown in Fig.~\ref{figTetraTailoring}(b).
\end{ex}

\begin{figure}
\centering
 \includegraphics[width=\textwidth]{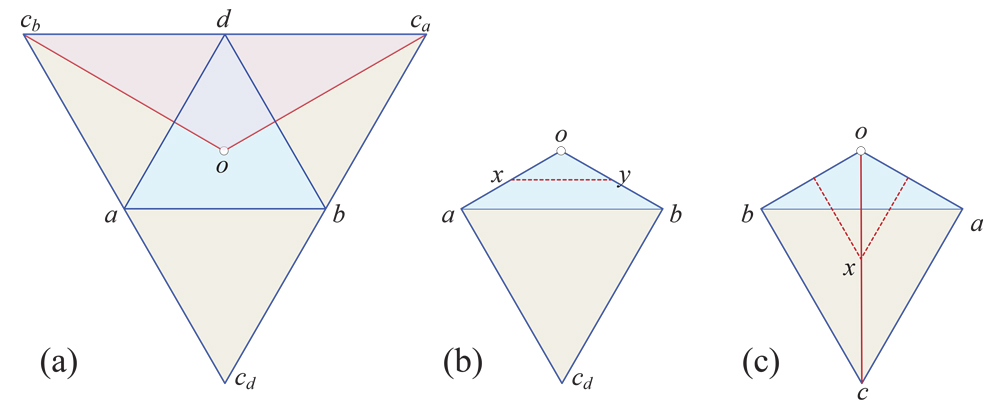}
\caption{Illustrations for Examples~\ref{tetra-deg}--\ref{deg-vol}.}
\label{fig2}\label{figTetraTailoring}
\end{figure}

\begin{ex}
\label{deg-vol}
We further tailor the doubly covered kite $K$ obtained in Example~\ref{tetra-deg}.
Excising a digon encircling $o$, between points $x \in oa$ and $y \in ob$, and zipping closed, 
yields a doubly covered pentagon,  see Fig.~\ref{figTetraTailoring} (b).

On the other hand, excising a digon encircling $o$, between corresponding points $x,y \in oc$ on different sides of $K$ (see Fig.~\ref{figTetraTailoring} (c)), and zipping closed, 
provides a non-degenerate pentahedron, illustrated in Fig.~\ref{figPentahedron}.
\end{ex}

\begin{figure}
\centering
 \includegraphics[width=0.9\textwidth]{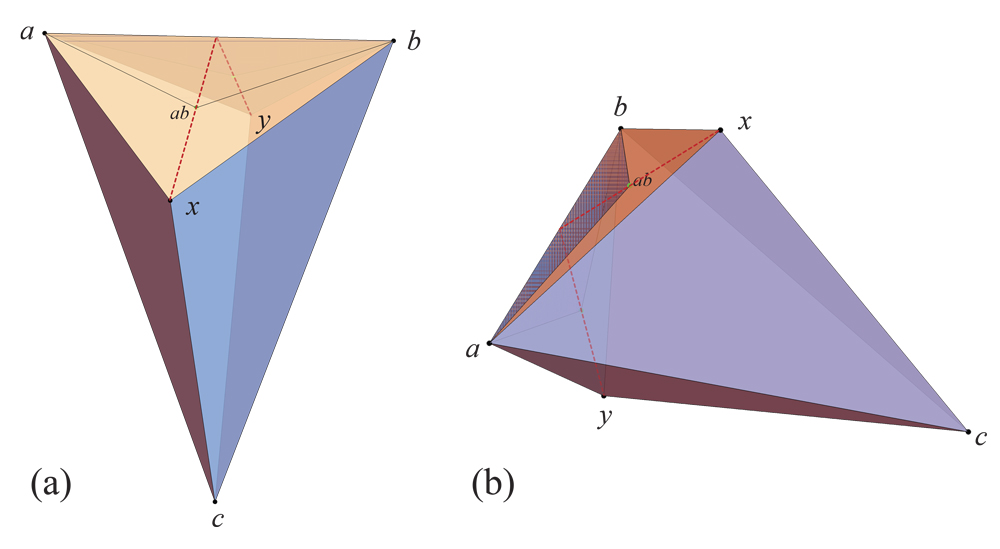}
\caption{The non-degenerate pentahedron obtained in Example \ref{deg-vol}.}
\label{fig3}\label{figPentahedron}
\end{figure}

\begin{ex}
\label{iso_tetra}
A digon may well contain several vertices, but for our digon-tailoring we only consider those with at most one vertex.
The limit case of a digon, containing no vertex, is an edge between two vertices.
(This will play a role in Section~\ref{secTailor_2}.)
In this case, gluing back along the cut would produce the original polyhedron, but we can as well zip closed from another starting point.
For example, cutting along an edge of an isosceles tetrahedron and carefully choosing the gluing provides a doubly covered rectangle.
See Fig.~\ref{figIsoscTetra}.
\end{ex}


\begin{figure}
\centering
 \includegraphics[width=1.0\textwidth]{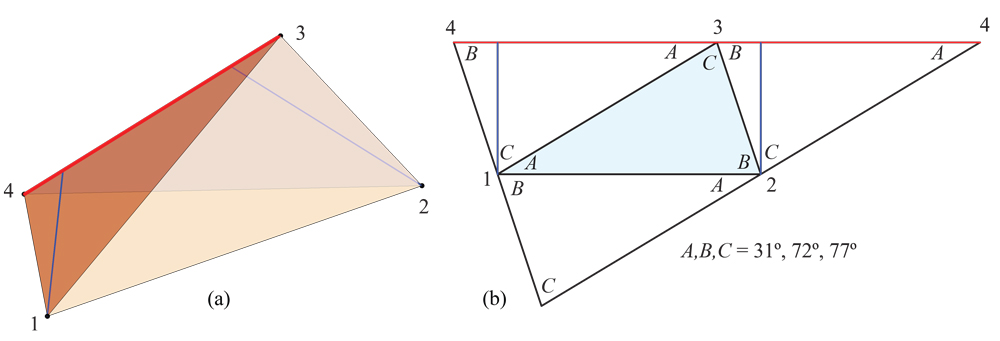}
\caption{(a)~An isosceles tetrahedron: All four faces are congruent.
Cutting along $v_3 v_4$ and regluing the two halves of that
slit differently, creasing at the blue segments,
yields a doubly-covered rectangle, as shown in~(b).}
\label{fig44}\label{figIsoscTetra}
\end{figure}

By Alexandrov's Gluing Theorem (AGT), this limit-case tailoring only works between vertices of curvatures $\geq \pi$.

In view of Fig.~\ref{figTetraTailoring}(c) and Fig.~\ref{figPentahedron},
it is clear that, even though tailoring is area decreasing, it is not necessarily volume decreasing.


\subsection{Vertex Merging}
\label{secVertexMerging}

Digon-tailoring is, in some sense, the opposite of {\it vertex merging}, a technique introduced by A.~D.~Alexandrov~\cite[p.~240]{code2},
and subsequently used later by others, see e.g.~\cite{Zal}, \cite{OV}, \cite{o-vtcp-2020}.
The technique we introduce for enlarging surfaces, in Section~\ref{enlarging}, is a generalization of vertex merging.

Consider two vertices $v_1, v_2$ of $P$ of curvatures $\o_1, \o_2$, with $\o_1 + \o_2 < 2 \pi$, and cut $P$ along a 
geodesic segment $\g$ joining $v_1$ to $v_2$. 
Construct a planar triangle $T = \bar{v}' \bar{v}_1 \bar{v}_2$ of base length $|\bar{v}_1 - \bar{v}_2|=|\g|$
and the base angles equal to $\o_1 /2$ and $\o_2 /2$ respectively. 
Glue two copies of $T$ along the corresponding lateral sides, and further glue the two bases of the copies to the two ``banks" of the cut of $P$ along $\g$. 
By Alexandrov's Gluing Theorem (AGT), the result is a convex polyhedral surface $P'$. 
On $P'$, the points (corresponding to) $v_1$ and $v_2$ are no longer vertices because exactly the angle deficit at each has been
sutured in; they have been replaced by a new vertex $v'$ of curvature $\o' = \o_1 +\o_2$.
See Fig.~\ref{figVertMerge}.
\begin{figure}
\centering
 \includegraphics[width=1.0\textwidth]{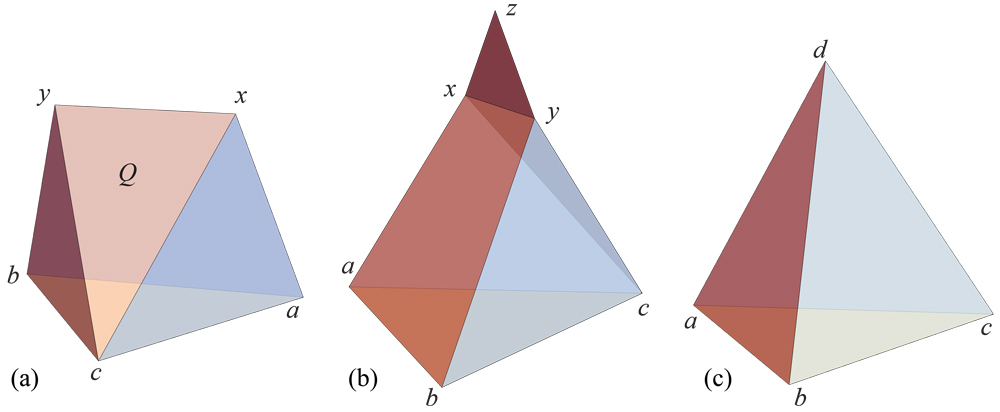}
\caption{(a)~$Q$ is a $5$-vertex polyhedron. Its base $abc$ is an equilateral triangle.
(b)~Vertex merging $x$ and $y$ by gluing two $\triangle$s $xyz$.
(c)~The merging reduces $Q$ to a regular tetrahedron.
}
\label{figPent}\label{figVertMerge}
\end{figure}

Tailoring a vertex $v$ identifies a digon $D=(x,y)$ enclosing $v$, with two geodesics from $x$ to $y$. 
In general, neither $x$ nor $y$ is a vertex before tailoring, but they become vertices after removing $D$, thereby increasing the number of vertices of $P$ by $1$.
If instead both $x$ and $y$ are already vertices, then the number of vertices of $P$ is decreased by $1$ from tailoring.
The challenge answered in our work is to direct tailoring to ``aim" from one polyhedon $P$ to the target $Q$.


\subsection{Summary of Results}

Here we list our
main theorems, each with a succinct (and at this stage, quite approximate) summary of their claims.
\begin{itemize}
\squeezelist
\item Theorem~\ref{thmMainTailoring}: 
$Q$ may be digon-tailored from $P$, tracking a sculpting of $P$ to $Q$.
\item Theorem~\ref{thmTailorNoSculpting}: 
A different proof of a similar
result, that $P$ may be digon-tailored to a homothet of $Q$, but this time without sculpting.
\item Theorem~\ref{thmCrestTailoring}: 
$P$ may be crest-tailored to $Q$.
\item Theorems~\ref{SliceAlgorithm} and~\ref{thmAlgorithmNoSculpt} and \ref{thmCrestTailoring}: 
Tailoring algorithms have time-complexity $O(n^4)$.
\item Theorem~\ref{Enlarge_Algorithm1}: Reversing tailoring yields procedures for enlarging $Q$ inside $P$ to match $P$.
\item Theorem~\ref{reshape}: Tailoring and enlarging allow reshaping convex polyhedra.
\item Theorem~\ref{thmP-unf}: $Q$ may be cut up and ``unfolded" isometrically onto $P$.

\end{itemize}

\noindent
Along the way to our central theorems, we obtain results not directly related to AGT:
\begin{itemize}
\item Theorem~\ref{rigid}: If two convex polyhedra with the same number of vertices match on all but 
the neighborhoods of one vertex, then they are congruent.
\item Theorem~\ref{thmDomePyr}: Every ``g-dome" can be partitioned into a finite sequence of pyramids by planes through its base edges.
\end{itemize}

The above results raise several open problems of various natures, either scattered along the text or presented in the last section of the paper.

Finally, we sketch the logic behind the first result listed above, Theorem~\ref{thmMainTailoring}.
Start with $Q$ inside $P$, and imagine a sequence of slices by planes that sculpt $P$ to $Q$. 
Theorem~\ref{general_slice} shows how to digon-tailor one such slice, which then establishes the claim that we can tailor $P$ to $Q$.
Theorem~\ref{general_slice} is achieved by first slicing off shapes we call ``g-domes," 
and then showing in Theorem~\ref{thmDomePyr} that every g-dome can be reduced to its base by slicing off pyramids, i.e., by vertex truncations.
Lemma~\ref{VertexTruncation} shows that such vertex truncations can be achieved by tailoring.
And the proof of Lemma~\ref{VertexTruncation} relies on the rigidity established by Theorem~\ref{rigid}.
So the path  of logic is:
\begin{align*}
\textrm{plane slice} \;\to\; \textrm{g-domes} \;\to\; \textrm{pyramids}  \;\to\; & \textrm{digon removals} \;.\\
& \uparrow\\
& \textrm{rigidity}
\end{align*}


\section{Preliminaries}
\label{secPreliminaries}

In this section we present some basic properties of cut loci on convex polyhedra, and other geometric tools needed subsequently.
The reader might skim this section and return to it as the tools are deployed.

The \emph{cut locus} $C(x)$ {\it of the point $x$} on a convex polyhedron $P$ is the closure of the set of points to which there are more than one shortest path from $x$.
This concept goes back to Poincar\'e. 
It has been studied algorithmically since~\cite{SS} (there, the cut locus is called the ``ridge tree'').

\begin{lm}
\label{basic}
(i) $C(x)$ has the structure of a finite $1$-dimensional simplicial complex which is a tree.
Its leaves (endpoints) are vertices of $P$, and all vertices of $P$, excepting $x$ (if it is a vertex) are included in $C(x)$. 
All points interior to $C(x)$ of tree-degree $3$ or more are known as \emph{ramification points} of $C(x)$.\footnote{%
In some literature, these points are called ``branch points" or ``junctions" of $C(x)$.}
All vertices of $P$ interior to $C(x)$ are also considered as ramification points.

(ii) Each point $y$ in $C(x)$ is joined to $x$ by as many geodesic segments as the number of connected
components of $C(x) \setminus {y}$.
For ramification points in $C(x)$, this is precisely their degree in the tree.

(iii) The edges of $C(x)$ are geodesic segments on $P$.

(iv) Assume the geodesic segments $\g$ and $\g'$ (possibly $\g = \g'$) from $x$ to $y \in C(x)$ are bounding a domain $D$ of $P$, 
which intersects no other geodesic segment from $x$ to $y$.
Then there is an arc of $C(x)$ at $y$ which intersects $D$ and it bisects the angle of $D$ at $y$.
\end{lm}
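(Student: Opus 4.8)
The plan is to prove the four parts of Lemma~\ref{basic} in order, leaning on the first-variation formula for arc length and on the standard fact that on a convex polyhedron the exponential map from a point $x$ is a local isometry away from the cut locus.

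\textbf{Part (i).} First I would recall that $C(x)$ is, by definition, the closure of the set of points joined to $x$ by $\ge 2$ geodesic segments. Since $P$ is a polyhedral (hence piecewise-flat) surface with finitely many cone points, the function ``distance from $x$'' is piecewise-analytic with finitely many ``sheets'' of the exponential map, so $C(x)$ is a finite union of analytic arcs; a transversality/genericity argument shows these arcs meet in finitely many points, giving the structure of a finite $1$-dimensional simplicial complex. That it is a tree follows because $P\setminus C(x)$ is the image under the (injective) exponential map of a star-shaped planar region, hence is an open disk, and cutting a sphere along $C(x)$ yielding a disk forces $C(x)$ to be simply connected, i.e., a tree (no cycles, connected). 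For the leaves: a point $y$ of tree-degree $1$ in $C(x)$ has a unique geodesic segment to $x$ on each side locally; I would argue via the first-variation formula that if $y$ were a smooth (non-vertex) point, the single cut path could be extended slightly, contradicting $y$ being an endpoint — so every leaf must carry positive curvature, i.e., be a vertex of $P$. Conversely, every vertex $v\ne x$ has $\o(v)>0$, so the geodesics from $x$ cannot pass through $v$ and must stop there; hence $v\in C(x)$. The classification of ramification points is then a definition.

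\textbf{Part (ii).} The key tool is that the number of shortest paths from $x$ to $y$ equals the number of distinct ``sheets'' of the domain $P\setminus C(x)$ abutting $y$, which in turn is the number of connected components into which removing $y$ disconnects a small disk around $y$ intersected with $C(x)$ — and since $C(x)$ is a tree, this is exactly the number of components of $C(x)\setminus\{y\}$, which for an interior ramification point is its tree-degree. I would make this precise by developing a neighborhood of $y$ into the plane: the preimages of $y$ under the exponential map are the endpoints of the shortest paths, and consecutive preimages (in angular order around $x$'s image) are separated exactly by the arcs of $C(x)$.

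\textbf{Part (iii).} Edges of $C(x)$ are locally the equidistant set between two branches of the exponential map; developing the two relevant flat sheets, an edge of $C(x)$ is the locus of points equidistant from two fixed planar images of $x$, which is a straight segment in each flat piece, hence a geodesic on $P$. One must also check such an edge does not pass through a vertex in its relative interior, which holds because vertices are ramification points by (i).

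\textbf{Part (iv).} This is the substantive part and the main obstacle. Given the two segments $\g,\g'$ from $x$ bounding a domain $D$ meeting no other segment from $x$ to $y$, I would first argue that $y\in C(x)$ (two distinct segments reach it) and that, just inside $D$ near $y$, points are still reached by a unique shortest path from $x$ — so the arc of $C(x)$ leaving $y$ into $D$ is the equidistant set of the two developments of $\g$ and $\g'$. Developing $D$ isometrically into the plane, $\g$ and $\g'$ become two straight segments of equal length from the single planar image $\tilde x$ to $\tilde y$; the set of points equidistant (along paths in $D$) from $\tilde x$ via the two sides is precisely the perpendicular bisector of the planar angle at $\tilde y$ between the two images — i.e., the angle bisector of $D$ at $y$. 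I would need to justify that the relevant cut-locus arc coincides with this bisector for a genuine initial segment (not merely that the bisector direction is tangent), using that $D$ contains no third segment from $x$ to $y$, so no other sheet interferes near $y$. The delicate point to handle carefully is the possibility $\g=\g'$ (a single segment bounding $D$ on both sides, which happens when $y$ is ``wrapped around''), where the development still produces two copies of the segment making a nonzero angle at $\tilde y$ because of accumulated curvature inside $D$, and the bisector claim goes through verbatim.
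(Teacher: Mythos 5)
The paper does not actually prove Lemma~\ref{basic}: its ``proof'' is a one-line citation, declaring (i), (ii), (iv) well known and deferring (iii) to Lemma~2.4 of~\cite{aaos-supa-97}. Your sketch therefore fills in material the authors intentionally left implicit, and your overall strategy --- injectivity of the exponential map off $C(x)$, the disk $P\setminus C(x)$ forcing tree structure, curvature at cone points forcing them into $C(x)$, local equidistant-set descriptions, and first variation --- is the standard classical argument and is essentially sound for (i)--(iii).

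There is, however, a genuine slip in (iv) as written. You assert that developing $D$ isometrically into the plane turns $\g$ and $\g'$ into two straight segments of equal length ``from the single planar image $\tilde x$ to $\tilde y$.'' That configuration is impossible in the plane: between two fixed points there is only one straight segment, so a single $\tilde x$, a single $\tilde y$, and two \emph{distinct} straight segments between them cannot coexist. Indeed, if $D$ contained no cone point it would be flat and $\g=\g'$ would be forced, so any nondegenerate $D$ carries curvature and any isometric development requires a slit. The correct picture slits $D$ along segments from its interior vertices to $x$, so the development has one image $\tilde y$ but two images $\tilde x_1,\tilde x_2$ of $x$; the arc of $C(x)$ entering $D$ at $y$ is then the perpendicular bisector of $\tilde x_1\tilde x_2$, which (since $|\tilde x_1\tilde y|=|\tilde x_2\tilde y|$) passes through $\tilde y$ along the bisector of $\angle \tilde x_1\tilde y\tilde x_2$, i.e.\ the angle of $D$ at $y$. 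Alternatively, skip the global development: by first variation, displacing $z$ from $y$ into $D$ in a direction making angles $\alpha$ and $\beta$ with $\g$ and $\g'$ changes the two distance functions at rates $-\cos\alpha$ and $-\cos\beta$, so remaining equidistant forces $\alpha=\beta$. Your last sentence about the case $\g=\g'$ already invokes the needed duplication (``two copies of the segment''); the same duplication is required in the general case, and should be stated there as well.
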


\begin{proof} 
The statements (i)-(ii) and (iv) are well known.
The statement (iii) is Lemma 2.4 in \cite{aaos-supa-97}.
\end{proof}

The following is Lemma~4 in~\cite{INV}.

\begin{lm}
\label{path}
If $C(x)$ is a path, the polyhedron is a doubly-covered (flat) convex polygon, with $x$ on the rim.
\end{lm}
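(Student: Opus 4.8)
The plan is to exploit the structure of the cut locus established in Lemma~\ref{basic} together with the Gauss--Bonnet / angle-deficit bookkeeping. First I would observe that if $C(x)$ is a path (a tree that is a simple arc), then it has exactly two leaves. By Lemma~\ref{basic}(i), every leaf of $C(x)$ is a vertex of $P$, and every vertex of $P$ other than $x$ lies on $C(x)$; moreover the only points of $C(x)$ that are permitted to have tree-degree $3$ or more, or to be vertices of $P$ in the interior of $C(x)$, are the ramification points. Since a path has no interior point of degree $\geq 3$, the interior of $C(x)$ contains no ramification points, hence no vertices of $P$ at all. Therefore $P$ has at most three vertices: the two endpoints of the path, and possibly $x$ itself. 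The endpoints are genuine vertices (positive curvature), so $P$ has either two vertices (if $x$ is not a vertex, or $x$ coincides with an endpoint) or three vertices (the two endpoints plus $x$).

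Next I would pin down the curvatures. By Lemma~\ref{basic}(ii), a point $y \in C(x)$ is joined to $x$ by as many geodesic segments as the number of connected components of $C(x)\setminus\{y\}$; for an interior point of the path this number is $2$, and for an endpoint it is $1$. So each of the two endpoint-vertices $a,b$ is reached from $x$ by a \emph{unique} shortest path. Cutting $P$ open along $C(x)$ unfolds it isometrically into a planar polygon (the source unfolding, or ``star unfolding'' relative to $C(x)$): the complement $P\setminus C(x)$ is a topological disk, star-shaped with respect to $x$, and it develops in the plane onto a region bounded by the two developed images of the path $C(x)$. Because $C(x)$ is a single arc from $a$ to $b$ through no other vertex, its two banks develop to two congruent polygonal arcs sharing the endpoint image of $x$ and meeting again at the images of $a$ and $b$; the resulting planar region is precisely a polygon that is symmetric across the line through (the image of) $x$, and re-identifying the two banks glues this polygon to its mirror image along a common boundary path. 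That is exactly the description of a doubly covered flat convex polygon, with $x$ lying on the boundary rim (it is the point where the unfolding is ``opened'').

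I would then check the total-curvature constraint to confirm convexity and the ``flat'' (degenerate) conclusion: by Gauss--Bonnet the curvatures sum to $4\pi$. If $x$ is not a vertex, then $\omega(a)+\omega(b)=4\pi$ with each $\leq 2\pi$, forcing $\omega(a)=\omega(b)=2\pi$, i.e.\ $a,b$ are the two ``endpoints'' of a doubly covered polygon and $x$ is an interior rim point with $\omega(x)=0$. If $x$ is a vertex, the three curvatures sum to $4\pi$; AGT still guarantees the glued surface is convex, and the unfolding argument shows it is the doubly covered polygon, now with $x$ a corner of the rim. The main obstacle I anticipate is making the unfolding/re-identification step fully rigorous — i.e.\ verifying that developing $P\setminus C(x)$ into the plane really does yield a convex polygon and that the gluing of its two banks produces a \emph{flat} (doubly covered) object rather than something three-dimensional. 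This is where one must invoke the precise bijection between cut locus and a simple geodesic tree, use that the two banks have equal length at every point (Lemma~\ref{basic}(iii), the edges are geodesic segments), and appeal to AGT together with the curvature count above to rule out a non-degenerate polyhedron. Everything else is routine combinatorics of trees and the angle bookkeeping already packaged in Lemma~\ref{basic}.
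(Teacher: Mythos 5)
The paper gives no proof of this lemma --- it cites it verbatim as Lemma~4 of \cite{INV} --- so there is nothing in the text to compare route-for-route against. Judged on its own terms, your proposal has a genuine gap at its very first deduction. You argue: a path has no interior point of tree-degree $\geq 3$, hence the interior of $C(x)$ contains no ramification points, hence no vertices of $P$, hence $P$ has at most three vertices. This misuses the clause of Lemma~\ref{basic}(i) that you quote: ``all vertices of $P$ interior to $C(x)$ are also considered as ramification points.'' That clause makes degree-$2$ interior points of $C(x)$ that are vertices of $P$ ramification points nonetheless, so ``path'' (= no degree-$\geq 3$ points) does \emph{not} imply ``no interior vertices of $P$.'' Indeed the lemma's conclusion is a doubly-covered convex \emph{polygon}, which has arbitrarily many vertices, all but (at most) one of them strung in order along the arc $C(x)$. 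The paper's own application of this lemma in the proof of Theorem~\ref{main-} (see Fig.~\ref{fig5}(b)) exhibits precisely this: after pruning, $C(x)$ is a path $\rho$ through several degree-$2$ vertices of $P$, and the result is a doubly-covered $7$-gon.

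Because of this, your bound ``at most three vertices,'' the Gauss--Bonnet bookkeeping $\omega(a)+\omega(b)=4\pi$, and the claimed mirror symmetry of the source unfolding across the line through the image of $x$, are all false in the generic case the lemma must cover. A correct proof has to treat a path through arbitrarily many degree-$2$ vertices, using Lemma~\ref{basic}(iii)--(iv) (geodesic edges, angle-bisection at every interior point) to show that the two banks of $C(x)$ develop to mutually reflected chains, and that re-gluing this reflective pair forces the metric (and hence, by AGT, the polyhedron) to be a flat doubly-covered polygon with $x$ on its rim. That is substantially more than the routine angle bookkeeping your sketch anticipates.
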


Next we introduce a general method for unfolding any convex polyhedron $P$ to a simple (non-overlapping) polygon in the plane. 
We use this subsequently largely because of its connection to the cut locus.

To form the {\it star unfolding} of a $P$ with respect to $x$, one cuts $P$ along the geodesic segments (supposed unique) from $x$ to every vertex of $P$.
The idea goes back to Alexandrov \cite{code2}; the  non-overlapping of the unfolding was established in \cite{ao}, where the next result was also proved.
See Fig. \ref{StarUnfCube}.

\begin{lm}
\label{*unfCL}
Let $S_P=S_P(x)$ denote the star unfolding of $P$ with respect to $x \in P$.
Then the image of $C(x)$ in $S_P$ is the restriction to $S_P$ of the Voronoi diagram of the images of $x$.
\end{lm}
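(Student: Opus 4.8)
The plan is to use the fact that the star unfolding is an isometry off the cuts, and to translate ``admitting two shortest paths to $x$'' into ``being equidistant from two nearest source images.'' Write $\xi_i$ for the geodesic segment from $x$ to the $i$-th vertex of $P$, set $\Xi=\bigcup_i\xi_i$ (a tree through $x$ containing all vertices of $P$), and let $u\colon P\setminus\Xi\to\inn S_P$ be the development map. Since $P\setminus\Xi$ is a disk that is flat off the (boundary) vertices, $u$ is a local isometry; by~\cite{ao} it is injective and $S_P$ is a simple polygon, whose boundary carries the images $x_1,\dots,x_k$ of $x$ interleaved with the vertex images. I would first prove the set equality on $\inn S_P$ and then pass to closures.

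First I would show that for $p\in P\setminus(\Xi\cup C(x))$ the unique shortest path $\s$ from $p$ to $x$ meets no $\xi_i$ in its relative interior: if $\s$ crossed $\xi_i$ transversally at an interior point, exchanging the subarcs of $\s$ and $\xi_i$ beyond that crossing would produce two equal-length paths emanating from $x$ with a corner, which can be strictly shortened --- impossible. Hence $u(\s)$ is a straight segment in $S_P$ from $u(p)$ to exactly one source image $x_j$, with $|u(p)-x_j|=d_P(p,x)$. Conversely, for any source image $x_\ell$ with $\overline{u(p)\,x_\ell}\subset S_P$, that segment develops to a path from $p$ to $x$, so $|u(p)-x_\ell|\ge d_P(p,x)$.

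I expect the only genuine obstacle to be the remaining point: that a source image $x_\ell$ whose segment to $u(p)$ \emph{leaves} $S_P$ cannot be strictly Euclidean-closer to $u(p)$ than $d_P(p,x)$ --- equivalently, that the nearest-site assignment of $\{x_\ell\}$ restricted to $S_P$ actually computes geodesic distance to $x$. This is precisely the structural property of the star unfolding established in~\cite{ao}, and I would cite it. Granting it, $d_P(p,x)=\min_\ell|u(p)-x_\ell|$, and the indices $\ell$ attaining the minimum are exactly the source images reached by unfolded shortest paths from $p$; moreover (same cited fact) each such segment lies inside $S_P$.

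Finally, a point $p\in P\setminus\Xi$ lies in $C(x)$ iff it admits at least two shortest paths to $x$ (Lemma~\ref{basic}), which by the above holds iff $u(p)$ is equidistant from two distinct source images both realizing $\min_\ell|u(p)-x_\ell|$, i.e.\ iff $u(p)$ lies on the Voronoi diagram of $x_1,\dots,x_k$ intersected with $S_P$; the ``if'' direction uses that two nearest source images have their connecting segments inside $S_P$, hence give two genuine shortest paths. Thus $u\big(C(x)\setminus\Xi\big)$ equals $\mathrm{Vor}(x_1,\dots,x_k)\cap\inn S_P$. Since $C(x)$ is the closure of the multiple-shortest-path set, its leaves are vertices of $P$ (Lemma~\ref{basic}), and the cuts $\xi_i$ map to boundary edges of $S_P$, taking closures on both sides gives the stated identification of the image of $C(x)$ in $S_P$ with the restriction to $S_P$ of the Voronoi diagram of the images of $x$.
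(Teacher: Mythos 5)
The paper itself does not prove Lemma~\ref{*unfCL}: it is stated immediately after a sentence citing Aronov and O'Rourke~\cite{ao} (``the non-overlapping of the unfolding was established in \cite{ao}, where the next result was also proved''), so there is no in-paper argument to compare against. Your sketch is a reasonable outline of how the argument goes, and it correctly isolates the genuine difficulty: one must know that for $p\in P$ the Euclidean distance from $u(p)$ to its nearest source image inside $S_P$ actually computes the geodesic distance $d_P(p,x)$, and that the realizing segments stay inside the simple polygon $S_P$. The routine parts of your write-up (shortest paths to $x$ not crossing the cuts $\xi_i$ transversally, the bijection between shortest paths and realizing segments off the cuts, and the closure argument) are sound. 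Be aware, though, that what you are citing from~\cite{ao} is not a small auxiliary fact: the nonoverlap theorem together with that distance-realization property is essentially the entire technical content backing the Voronoi characterization, so your ``proof'' is really a reduction of the lemma to the same source the paper cites. That is consistent with the paper's treatment, but you should be explicit that the heavy lifting is being imported rather than re-derived; as written it could read as if only a minor structural fact is being invoked.
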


\begin{figure}
\centering
 \includegraphics[width=1.0\textwidth]{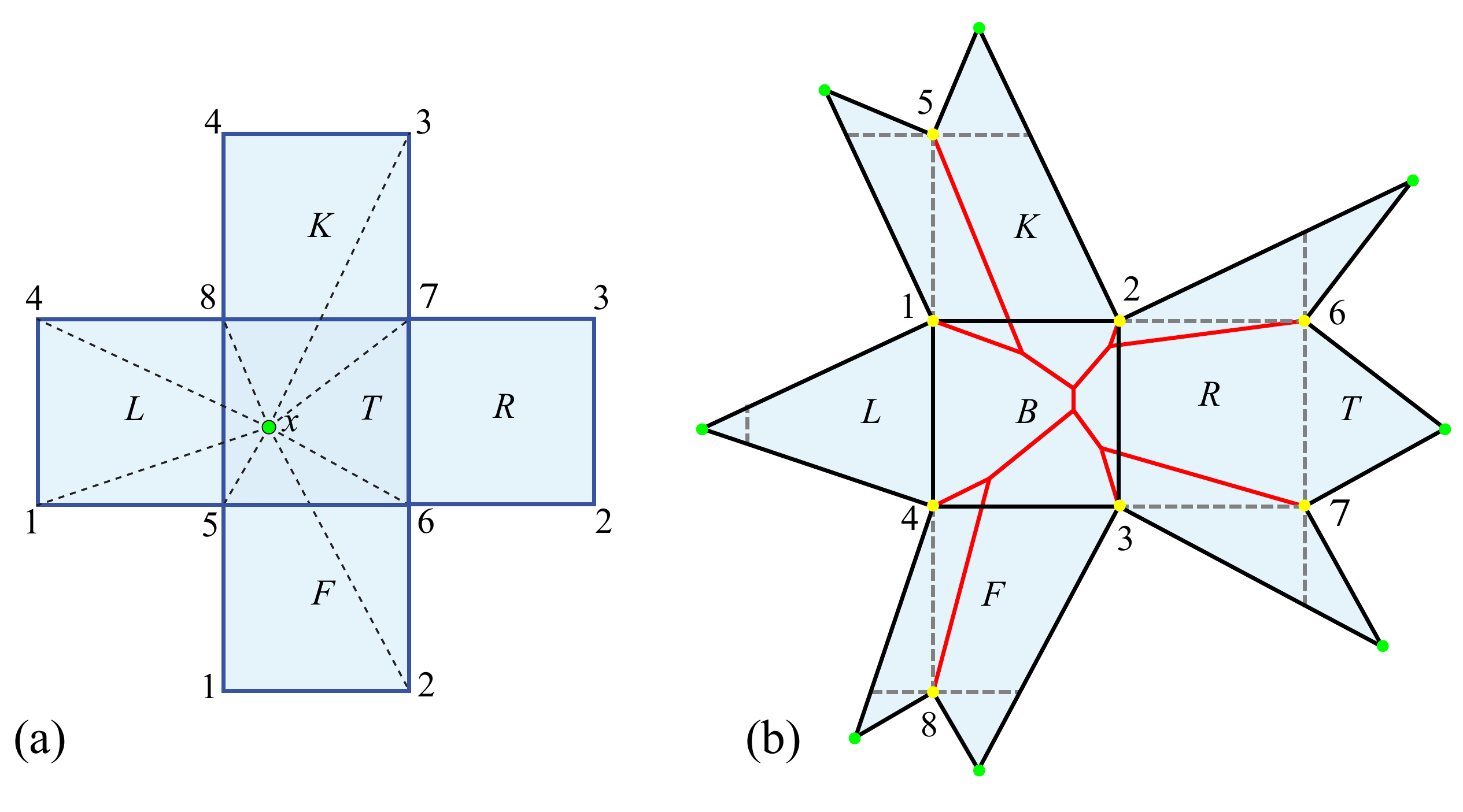}
\caption{(a)~Cut segments to the $8$ vertices of a cube from a point $x$
on the top face. T, F, R, K, L, B $=$ Top, Front, Right, Back, Left, Bottom.
(b)~The star-unfolding from $x$. The cut locus $C(x)$ (red) is the Voronoi
diagram of the $8$ images of $x$ (green).}
\label{StarUnfCube}
\end{figure}

The following result is very similar to, but more general than, Lemmas~2 and~3 in \cite{INV}. 
Its proof is a straightforward application of Lemmas \ref{basic} and \ref{*unfCL}, and will be omitted.

\begin{lm}
\label{Truncation}
Consider two segments $\g, \g'$ between points $x \in P$ and $y \in C(x)$. 
Cut along $\g \cup \g'$ and zip closed the two parts, starting from $x$.
Let $Q$ be one of the two resulting convex polyhedra.
Then the cut locus $C(x,Q)$ of $x$ on $Q$ is (isometric to) the truncation of the cut locus $C(x, P)$.
\end{lm}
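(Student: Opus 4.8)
The plan is to track the cut locus explicitly through the tailoring operation using the star unfolding, exactly as suggested by the hint that Lemmas~\ref{basic} and~\ref{*unfCL} suffice. First I would set up the star unfolding $S_P = S_P(x)$ of $P$ with respect to $x$, so that $C(x,P)$ appears inside $S_P$ as the restriction of the Voronoi diagram of the images $x_1,\dots,x_n$ of $x$ (one image per vertex of $P$, cut along the geodesic segments $x v_i$). Because $y \in C(x)$, by Lemma~\ref{basic}(ii) there are at least two segments from $x$ to $y$; the two chosen segments $\g,\g'$ bound a domain $D$ of $P$, and by Lemma~\ref{basic}(iv) an arc of $C(x)$ emanates from $y$ into $D$, bisecting the angle of $D$ at $y$. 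I would identify $D$ with the sub-wedge of $S_P$ between two consecutive images of $x$, so that "cutting along $\g\cup\g'$ and zipping from $x$" corresponds in the unfolding to deleting the wedge $D$ and re-gluing its two bounding rays to each other; the portion of $C(x)$ inside $D$ is precisely the Voronoi bisector of those two images of $x$, which is the ray from $y$ bisecting the wedge angle.

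Next I would argue that this operation does not disturb the rest of the Voronoi diagram. The key point is that the two images of $x$ bounding the wedge $D$ are merged into a single image after re-gluing, and no other image of $x$ is moved; hence outside $D$ the Voronoi diagram of the images is unchanged, while inside $D$ the bisector that used to separate the two now-identified images disappears. Concretely, $C(x,P) \cap D$ is a single edge (a geodesic sub-arc) running from $y$ to either a vertex of $P$ strictly inside $D$ or to another ramification point on $\bd D$; zipping $D$ closed from $x$ amputates exactly this pendant branch (together with everything hanging off it), and leaves the remaining tree $C(x,P)\setminus (C(x,P)\cap \inn D)$ intact, now realized on the smaller polyhedron $Q$. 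Applying Lemma~\ref{*unfCL} again on $Q$ — whose star unfolding $S_Q(x)$ is obtained from $S_P(x)$ by deleting the wedge $D$ and identifying its sides — gives that $C(x,Q)$ is the restriction to $S_Q$ of the Voronoi diagram of the (now $n-1$, or possibly $n$ if $y$ or the other endpoint becomes a new vertex) images of $x$, which is exactly the restriction of the old Voronoi diagram, i.e.\ the truncation of $C(x,P)$.

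I would then address bookkeeping: one must check that after removing the wedge, AGT applies (guaranteed in the paper's discussion, since $\g,\g'$ are geodesics so only the endpoints lose angle), that $x$ is still not a vertex of the cut-open surface (it remains the apex where we zip), and that the geodesic segments from $x$ to the surviving vertices of $Q$ are unchanged — this last is what makes $S_Q$ literally a sub-polygon of $S_P$ and makes "truncation" the precise relation. Finally I would note that "one of the two resulting convex polyhedra" refers to the two pieces into which the cut along $\g\cup\g'$ may split $P$ when $\g\neq\g'$ and $D$ is not the whole complement; the argument above is symmetric, with $C(x,Q)$ on the retained piece being the corresponding sub-tree.

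The main obstacle I expect is making the phrase "truncation of the cut locus" fully precise and matching it to the unfolding picture: one must be careful about the case where $y$ itself, or the other endpoint of the amputated branch, was an interior ramification point versus a leaf (vertex) of $C(x,P)$, and about whether $y$ becomes a new vertex of $Q$ (it does, with the angle it now carries equal to the wedge angle of $D$ at $y$), since that changes the number of Voronoi sites by $\pm 1$. Handling these incidence cases cleanly — rather than the generic-position case — is where the real care lies, but none of it is deep; it is the routine verification the authors rightly omit, and the structural content is entirely carried by Lemmas~\ref{basic} and~\ref{*unfCL}.
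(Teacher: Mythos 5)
Your approach is exactly the one the paper indicates (citing Lemmas~\ref{basic} and~\ref{*unfCL} as sufficient and omitting details), so at the level of strategy there is no disagreement. However, there is one real gap, and it is not the bookkeeping you flag at the end but something structural: throughout you model $D$ as a \emph{single} wedge between two \emph{consecutive} images of $x$, so that $C(x,P)\cap D$ is a single edge and the excision removes no interior images of $x$, only merges the two flanking ones. Under that assumption, your conclusion that ``outside $D$ the Voronoi diagram is unchanged'' is essentially automatic. But the lemma as stated (and as it is actually used in the paper, e.g.\ in the base case of Theorem~\ref{thmTailorNoSculpting} and in Claim~(2) of Lemma~\ref{pyramid}) allows $D$ to contain an arbitrary subtree $T_i$ of $C(x)$ with several vertices, so that in $S_P$ the region $\bar D$ spans several wedges and contains several interior images of $x$. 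Deleting several Voronoi sites does, in general, change the Voronoi cells of the surviving sites; your stated justification, ``no other image of $x$ is moved,'' does not rule this out.

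What closes this gap is the observation you gesture at but never state: since $\g$ and $\g'$ are shortest geodesics from $x$, their images in $S_P$ lie in the closed Voronoi cells of the two flanking images $x_i$ and $x_j$ respectively. Consequently, for any point $z$ in $\bar D_2$, any segment from a deleted site $x_k\subset\bar D$ to $z$ must cross $\g$ or $\g'$ at some point $w$, and since $|x_i w|\le |x_k w|$ (resp.\ $x_j$), the triangle inequality gives $|x_i z|\le |x_k z|$. Hence the deleted sites never own any territory in $\bar D_2$, and the Voronoi diagram there is genuinely unchanged upon deletion. You should also retract the parenthetical ``$n-1$, or possibly $n$'' count of images: with $m$ vertices inside $D$, the new count is $n-m+1$. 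These are fixable, and the skeleton of your proof is the paper's intended one, but the multi-vertex confinement argument is the actual content of the lemma, not routine bookkeeping.
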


In several proofs we invoke \emph{Cauchy's Arm Lemma}.
One form of the lemma says that if we have an open convex chain in the plane, $(x_1,\ldots,x_n)$, and the angles at the vertices $x_i$ are opened but not beyond $\pi$, 
maintaining all edge lengths $|x_i x_{i+1}|$ fixed, then the the distance between the chain endpoints $|x_1 x_n|$ lengthens.
In Lemma~\ref{pyramid} we will need an extension of Cauchy's lemma to reflex (greater than $\pi$) openings, described in~\cite{o-ecala-01}. This will be detailed in that proof.

The next elementary result assures the angle increase, 
in the frameworks in which we will apply Cauchy's Arm Lemma.

\begin{lm}
\label{lemAngles}
Consider three rays $r_1,r_2,r_3$ in $\R^3$, emanating from the point $w$, and put $\t_i=\angle(h_i, h_{i+1})$, with $3+1 \equiv 1$ mod $3$.
Then $\theta_1 \leq \theta_2 + \theta_3$.
\end{lm}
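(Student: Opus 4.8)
The plan is to recognize Lemma~\ref{lemAngles} as nothing more than the triangle inequality for the great-circle metric on the unit sphere. After translating $w$ to the origin, replace each ray by its unit direction vector $u_i \in S^2$; then $\theta_i = \angle(u_i,u_{i+1}) \in [0,\pi]$ is exactly the spherical distance between $u_i$ and $u_{i+1}$ (the length of the shorter great-circle arc joining them), and the asserted inequality $\theta_1 \le \theta_2 + \theta_3$ becomes $d_{S^2}(u_1,u_2) \le d_{S^2}(u_1,u_3) + d_{S^2}(u_3,u_2)$. So one only has to check that this spherical distance obeys the triangle inequality.

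I would give a self-contained inner-product proof rather than invoke the theory of geodesics on $S^2$. Set $\cos\theta_i = \langle u_i,u_{i+1}\rangle$. Assuming first that $\theta_2,\theta_3 \in (0,\pi)$, decompose $u_1 = \cos\theta_3\,u_3 + \sin\theta_3\,e_1$ and $u_2 = \cos\theta_2\,u_3 + \sin\theta_2\,e_2$ with $e_1,e_2$ unit vectors orthogonal to $u_3$. Expanding $\langle u_1,u_2\rangle$ and using $\langle e_1,e_2\rangle \ge -1$ gives $\cos\theta_1 \ge \cos\theta_2\cos\theta_3 - \sin\theta_2\sin\theta_3 = \cos(\theta_2+\theta_3)$. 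If $\theta_2+\theta_3 \ge \pi$ the claim is immediate since $\theta_1 \le \pi$; otherwise $\theta_1$ and $\theta_2+\theta_3$ both lie in $[0,\pi]$, where $\cos$ is decreasing, so the displayed inequality forces $\theta_1 \le \theta_2+\theta_3$.

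The degenerate cases are trivial and I would dispose of them at the end: $\theta_2 = 0$ gives $u_3 = u_2$ and hence $\theta_1 = \theta_3$; $\theta_3 = 0$ gives $u_3 = u_1$ and $\theta_1 = \theta_2$; and $\theta_2 = \pi$ or $\theta_3 = \pi$ makes $\theta_2+\theta_3 \ge \pi \ge \theta_1$. There is essentially no obstacle in this argument; the only point requiring a moment's care is the monotonicity step, since $\cos\theta_1 \ge \cos(\theta_2+\theta_3)$ only yields $\theta_1 \le \theta_2+\theta_3$ while one stays inside the interval $[0,\pi]$ on which $\cos$ is injective — which is exactly why the case $\theta_2+\theta_3 \ge \pi$ must be split off first.
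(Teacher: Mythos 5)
Your proof is correct and follows exactly the paper's route: translate to the unit sphere centered at $w$ and observe that the claim is the triangle inequality for spherical (great-circle) distance. The paper stops there, treating the spherical triangle inequality as known; you go on to verify it from scratch via the inner-product computation, which is fine but not a different approach.
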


\begin{proof}
Imagine a unit sphere $S$ centered on $w$ and let $\{s_i \} = r_i \cap S$, and use $\r$ to indicate spherical distance.
Then the claim of the lemma is the triangle inequality for spherical distances: $\r(s_1,s_2) \le \r(s_1,s_3) + \r(s_2,s_3)$.
\end{proof}


\section{Domes and pyramids}
\label{domes}

One of our goals in this paper, achieved in Theorem~\ref{main}, is to show that if $Q$ can be obtained from $P$ by sculpting, then it can also be obtained from $P$ by tailoring.
The key step (Theorem~\ref{general_slice}) repeatedly slices off shapes we call g-domes.
Each g-dome slice can itself be achieved by slicing off pyramids, i.e., by suitable vertex truncation.
Lemma~\ref{VertexTruncation} will show that slicing off a pyramid can be achieved by tailoring, and thus leading to Theorem~\ref{main}.
In this section we establish that g-domes can be viewed as composed of stacked pyramids.

As usual, a \emph{pyramid} $P$ is the convex hull of a convex polygon \emph{base} $X$,
and one vertex $v$, the \emph{apex} of $P$, that does not lie in plane of $X$. The degree of $v$ is the number of vertices of $X$. 

A \emph{dome} is a convex polyhedron $G$ with a distinguished face $X$, the \emph{base}, and such that every other face of $G$ shares a (positive-length) edge with $X$.
Domes have been studied primarily for their combinatorial~\cite{Eppstein},~\cite{Epp-Loff} or unfolding~\cite{DOR} properties.
In~\cite{Eppstein} they are called ``treetopes" because removing the base edges from the $1$-skeleton leaves a tree, which the author calls the \emph{canopy}.\footnote{
These polyhedra are not named in~\cite{Epp-Loff}.}
Here we need a slight generalization.

A \emph{generalized-dome}, or \emph{g-dome} $G$, has a base $X$, with every other face of $G$ sharing either an edge or a vertex with $X$.
Every dome is a g-dome, and it is easy to obtain every g-dome as the limit of domes.
An example is shown in Fig.~\ref{figg-dome}, which also shows that removing base edges from the $1$-skeleton does not necessarily leave a tree: $(v_1,x_2,v_2)$ forms a cycle.
Let us define the \emph{top-canopy} $T$ of a g-dome $G$ as the graph that results by deleting from the $1$-skeleton of $G$ all base vertices and their incident edges. 
In Fig.~\ref{figg-dome} the top-canopy is $v_1 v_2$.

\begin{figure}
\centering
 \includegraphics[width=0.5\textwidth]{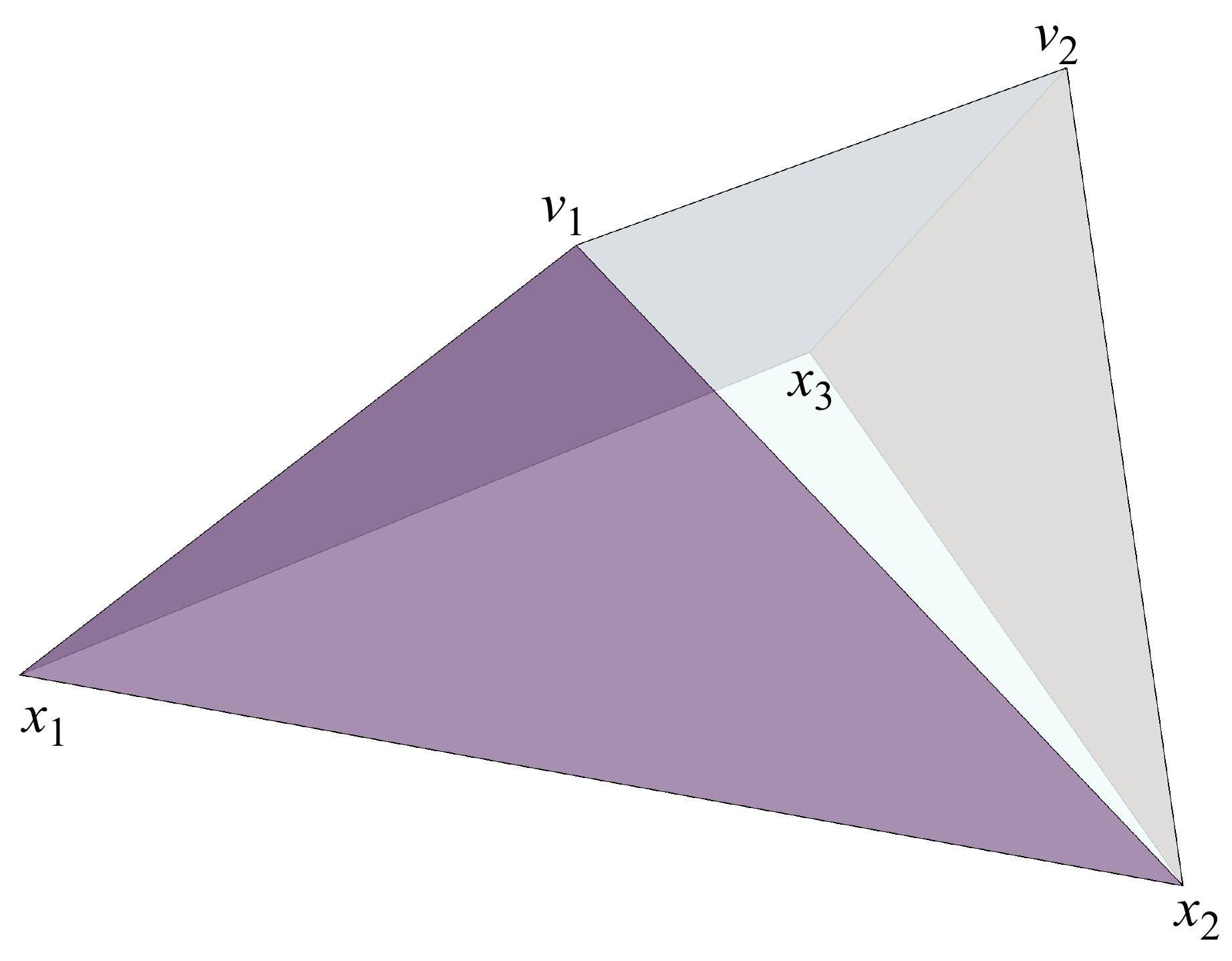}
\caption{A g-dome with base $x_1,x_2,x_3$ and top-canopy $v_1 v_2$.
}
\label{figg-dome}
\end{figure}

\begin{lm}
\label{canopy}
The top-canopy $T$ of a g-dome $G$ is a tree.
\end{lm}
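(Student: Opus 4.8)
The plan is to prove that the top-canopy $T$ of a g-dome $G$ is a tree by showing it is connected and acyclic. Since $G$ is a convex polyhedron, its $1$-skeleton is a planar $3$-connected graph (Steinitz), and $T$ is obtained by deleting the base vertices of $X$ and their incident edges.

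First I would establish connectivity of $T$. Every face of $G$ other than the base $X$ shares at least a vertex with $X$; the non-base faces, glued along shared edges, tile the ``upper'' region of $G$, i.e., $G \setminus X$ viewed as a topological disk whose boundary is $\bd X$. The top-canopy $T$ is precisely the portion of the $1$-skeleton lying in the interior of this disk (the edges and vertices not on $\bd X$). A standard fact is that for a polyhedral disk, the subgraph of interior edges and vertices is connected: if it had two components, one could separate the disk by a simple closed curve through the base boundary alone, contradicting that every non-base face touches $X$ (more carefully, one uses that the dual graph of the non-base faces is connected, and walks between any two interior vertices through adjacent faces). I would phrase this via the dual: the non-base faces of $G$ form a connected patch because $G$ is $2$-connected, and adjacent faces share an edge which, if not a base edge, contributes to $T$.

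Next I would show $T$ is acyclic. Suppose $T$ contained a cycle $\kappa$. Then $\kappa$ is a simple closed curve in the $1$-skeleton of $G$ avoiding all base vertices. By the Jordan curve theorem on the sphere $G$, $\kappa$ separates $G$ into two closed regions. The base $X$ lies entirely in one of them (since $\kappa$ misses all vertices of $X$, and $X$ is a single face it cannot be cut by $\kappa$). But then every face in the other region is a non-base face none of whose vertices lie on $X$ --- contradicting the defining property of a g-dome that every non-base face shares at least a vertex with $X$. Hence no such cycle exists.

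The main obstacle I anticipate is the connectivity argument: while intuitively clear, it requires care because a g-dome (unlike an ordinary dome) may have a non-base face meeting $X$ in only a single vertex, so the ``canopy'' need not hang off the base edges in an obvious forest-like way, and indeed removing base \emph{edges} alone can leave a cycle (as the paper's Fig.~\ref{figg-dome} shows). The point of deleting base \emph{vertices} as well is exactly to cut such cycles, and I would need to verify that this stronger deletion does not disconnect $T$; the cleanest route is to argue directly on the disk $G \setminus \inn X$ and its face-adjacency (dual) graph, invoking $2$-connectivity of the $1$-skeleton of $G$ to ensure that the interior subgraph inherits connectivity. I would also handle the degenerate possibility that $T$ is empty or a single vertex, which trivially counts as a tree.
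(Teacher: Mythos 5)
Your approach is genuinely different from the paper's. The paper reduces the g-dome to a dome by slicing with a plane parallel to, and slightly above, the base (which preserves the top-canopy because all non-base vertices lie strictly above the base plane), and then invokes Eppstein's ``treetopes'' result that for a dome the canopy (\emph{$1$-skeleton minus base edges}) is a tree; since for a dome every base vertex has exactly one non-base neighbor, base vertices are leaves of that tree and deleting them leaves the top-canopy, a smaller tree. Your proof instead works directly on the g-dome via the Jordan curve theorem, and your acyclicity argument is clean and correct: a cycle $\kappa \subset T$ misses all base vertices, so the face $X$ lies entirely on one side of $\kappa$, and any face on the other side would be a non-base face sharing no vertex with $X$, contradicting the g-dome condition. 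This is a nice, self-contained argument that does not need Eppstein's result.

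However, your connectivity argument has a genuine gap, which you partly anticipate. The statement ``for a polyhedral disk, the subgraph of interior edges and vertices is connected'' is \emph{not} a standard fact in that generality; it can fail for abstract disk tilings (and, without the g-dome hypothesis, deleting a face's vertices from a $3$-connected planar graph need not leave a connected graph). Invoking $2$- or $3$-connectivity of the $1$-skeleton alone cannot rule out $T$ being disconnected, since the base vertices are a legal vertex-cut of size $\geq 3$; and the face-dual graph of the non-base faces being connected does not directly yield a path inside $T$, because two adjacent faces may share only a base edge, or a single interior vertex may be separated from another by base vertices interposed on a common face. What saves you is additional structure. The cleanest repair is to bypass a separate connectivity proof via an Euler-characteristic count: for a g-dome one can show the number of non-base--to--non-base edges equals $n-1$, where $n$ is the number of non-base vertices, so that acyclicity alone (which you prove) forces $T$ to be a tree. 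Alternatively, one can fall back on the paper's reduction to domes. As written, your connectivity step is not yet a proof.
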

\begin{proof}
If $G$ is a dome, the claim follows, because even including the edges incident to the base $X$ results in a tree, and removing those edges leaves a smaller tree.

If $G$ is not a dome, then slice it with a plane parallel to, and at small distance above, the base. 
The result is a dome, and we can apply the previous reasoning.
\end{proof}

\begin{figure}
\centering
 \includegraphics[width=1.0\textwidth]{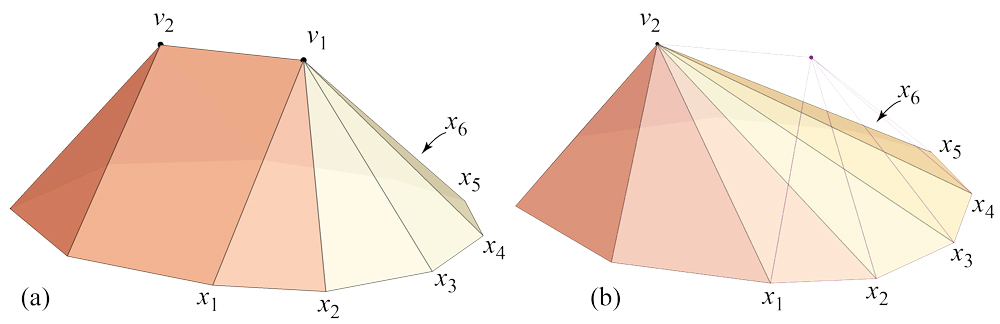}
\caption{
(a)~A dome $G$. Vertex $v_1$ of degree $k_1=6$ is adjacent to $X$, i.e.,
there are edges $v_1 x_i$. 
(b)~$G'$ after removal of $v_1$.}
\label{figDomeSlice_begend}
\end{figure}

\begin{thm}
\label{pyra-dome}
\label{thmDomePyr}
Every g-dome $G$ of base $X$ can be partitioned into a finite sequence of $n$ pyramids $P_i$ with the following properties:
\begin{itemize}
\squeezelist
\item Each $P_i$ has a common edge with $X$.
\item Each $G_j = G \setminus \bigcup_{i=1}^j P_i$ is a g-dome, for all $j=1,...,n$.
\item The last pyramid $P_n$ in the sequence has the same base $X$ as $G$.
\end{itemize}
\end{thm}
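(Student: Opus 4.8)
The plan is to peel off pyramids one vertex at a time, working on the top-canopy tree $T$ from its leaves inward, and to stop just before the last step where the remaining g-dome is itself a single pyramid over $X$. Here is the strategy in order. First, by Lemma~\ref{canopy} the top-canopy $T$ of $G$ is a tree; as long as $G$ is not already a pyramid over $X$, this tree is nonempty, so it has at least one leaf $v_1$ (a vertex of $T$ of degree $\le 1$ in $T$). Such a leaf is a vertex of $G$ that is adjacent in the $1$-skeleton of $G$ to at most one other top-canopy vertex, and all its other neighbors are base vertices of $X$ (see Fig.~\ref{figDomeSlice_begend}). The key geometric claim is then: \emph{the vertex figure of $G$ at such a leaf $v_1$, namely the union of the faces of $G$ incident to $v_1$, is a pyramid $P_1$ whose base is a convex polygon lying in the set of faces adjacent to $X$, and whose removal $G_1 = G \setminus P_1$ is again a g-dome with base $X$.}

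The main steps to justify this claim are as follows. (1) Show that the faces incident to $v_1$, being faces of a convex polyhedron around a single vertex, form a convex cap; the boundary of that cap is a closed polygonal curve $X_1$. I would argue that $X_1$ is planar precisely when $v_1$ has degree $2$ in $T$ plus the base edges close up nicely — but in general $X_1$ need not be planar, so instead I take $P_1$ to be the convex hull of $v_1$ together with $X_1$, i.e. $P_1$ is a pyramid only after we are careful about which curve we call its base; the cleanest route is: slice $G$ by a plane that cuts off exactly $v_1$ (a vertex truncation), which exists because $v_1$ is a vertex of the convex polyhedron $G$, and let $P_1$ be the cut-off piece. By choosing the cutting plane close enough to $v_1$, the cross-section is a convex polygon $X_1$ with one vertex on each edge of $G$ at $v_1$, and $P_1 = \conv(v_1 \cup X_1)$ is a pyramid with apex $v_1$ and base $X_1$. (2) Check that $P_1$ has a common edge with $X$: since $v_1$ is adjacent to at least two base vertices of $X$ (a leaf of $T$ has $G$-degree at least $2$, and at most one neighbor is in $T$, so at least one is in $X$; in fact at least two are in $X$ since $G$ is $3$-connected-ish — more carefully, the faces at $v_1$ that touch $X$ contribute an edge of $X$ to $P_1$'s boundary), some edge of $X$ survives as an edge of $P_1$. (3) Check that $G_1 = G \setminus P_1$ is a convex polyhedron (it is an intersection of $G$ with a half-space, hence convex) and that it is still a g-dome with the same base $X$: every face of $G_1$ either was a face of $G$ not incident to $v_1$ (still shares an edge or vertex with $X$), or is a face of $G$ incident to $v_1$ with $v_1$ truncated away (still shares an edge or vertex with $X$, since truncating $v_1$ does not destroy its contact with $X$), or is the new cross-section face $X_1$ — and here I need $X_1$ to touch $X$, which holds because $v_1$ had a neighbor in $X$, so the truncating plane meets an edge $v_1 x_i$ with $x_i \in X$, giving $X_1$ a vertex on that edge; a bit more care shows $X_1$ actually shares an edge with some face adjacent to $X$. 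Iterating, the top-canopy of $G_1$ is $T$ with the leaf $v_1$ deleted, still a tree, with strictly fewer vertices, so after finitely many steps the top-canopy becomes empty, meaning the remaining g-dome $G_{n-1}$ has every non-base face adjacent to $X$ and no top-canopy vertices at all, i.e. $G_{n-1}$ is a pyramid $P_n$ over $X$ itself; this is the last pyramid in the sequence.

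The step I expect to be the main obstacle is (3) — verifying rigorously that the truncation cross-section $X_1$ always shares a \emph{positive-length edge} (not merely a vertex) with some face of $G_1$ adjacent to the base $X$, so that $G_1$ genuinely remains a g-dome and not something worse, and simultaneously that no \emph{new} top-canopy vertex is created by the truncation. The first concern is handled by choosing the truncating plane generically and close to $v_1$ and tracking which faces of $G$ at $v_1$ are "base-adjacent"; the second is handled by noting that each vertex of $X_1$ lies on an edge $v_1 u$ of $G$, and its "type" (base or canopy) is inherited from $u$, so no genuinely new canopy structure appears. A secondary subtlety is the base case / termination: one must observe that a g-dome with empty top-canopy is exactly a pyramid over $X$, which follows because every non-base face then touches $X$ and has no edge lying entirely off $X$, forcing all such faces to be triangles with an apex at a common point (the unique remaining vertex off $X$). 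I would also remark that this inductive truncation scheme is exactly the "stacked pyramids" picture advertised before the theorem, and it is manifestly finite and constructive, which is what the later algorithmic theorems need.
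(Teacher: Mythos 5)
Your proposed construction has a genuine gap that breaks both of the first two bullet points of the theorem, and it is precisely these two constraints that force the paper's more intricate double induction. If you truncate the leaf $v_1$ of the top-canopy by a plane chosen ``close enough to $v_1$,'' the cut-off pyramid $P_1$ is a small cap around $v_1$. Since a top-canopy vertex lies strictly off the base $X$, none of $P_1$'s lateral faces reach down to $X$, so $P_1$ shares \emph{no} edge with $X$ --- contradicting the first bullet. Worse, the new cross-section face $X_1$ is a face of $G_1 = G \setminus P_1$ that shares neither an edge nor even a vertex with $X$ (every vertex of $X_1$ is strictly interior to some edge $v_1u$ of $G$), so $G_1$ fails the g-dome definition --- contradicting the second bullet. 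Your step (2) claims ``some edge of $X$ survives as an edge of $P_1$,'' but this is simply false for a truncating plane close to $v_1$; and your step (3) asserts that $X_1$ touches $X$ because the plane meets edges $v_1 x_i$, but the intersection point on $v_1x_i$ is not a vertex of $X$ and lies strictly above it. You flag this exact concern as ``the main obstacle'' but then dismiss it by genericity and closeness of the plane, which is the direction that makes it fail rather than fixing it.

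The paper avoids this by never using a generic small truncation: each slicing plane is required to pass through a base edge $x_ix_{i+1}$ and through the parent $v_2$ of $v_1$ in $T$. That guarantees the removed pyramid shares an edge with $X$ and the new cut face remains base-adjacent, so the intermediate polyhedra stay g-domes. The cost is that such a ``deep'' slice introduces a string of new vertices $a_3,\ldots,a_k$ along the lateral edges, which must be eliminated one at a time by further pyramid removals (again via planes through base edges). This is exactly the inner induction in the paper's two-level argument. In short, the bullet-point constraints of the theorem are not cosmetic: a straightforward vertex truncation of a canopy leaf cannot satisfy them, and the proof must instead cut all the way down to the base every time, then ``clean up'' the debris.
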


The proof is a double induction, and a bit intricate.
One induction simply removes one vertex $v_1$ from the top-canopy.
We will illustrate the proof with the example in Fig.~\ref{figDomeSlice_begend}.
The second induction, inside the first one, reduces the degree of $v_1$ to achieve removal of $v_1$, at the cost of increasing the degree of $v_2$.

\begin{figure}
\centering
 \includegraphics[width=1.0\textwidth]{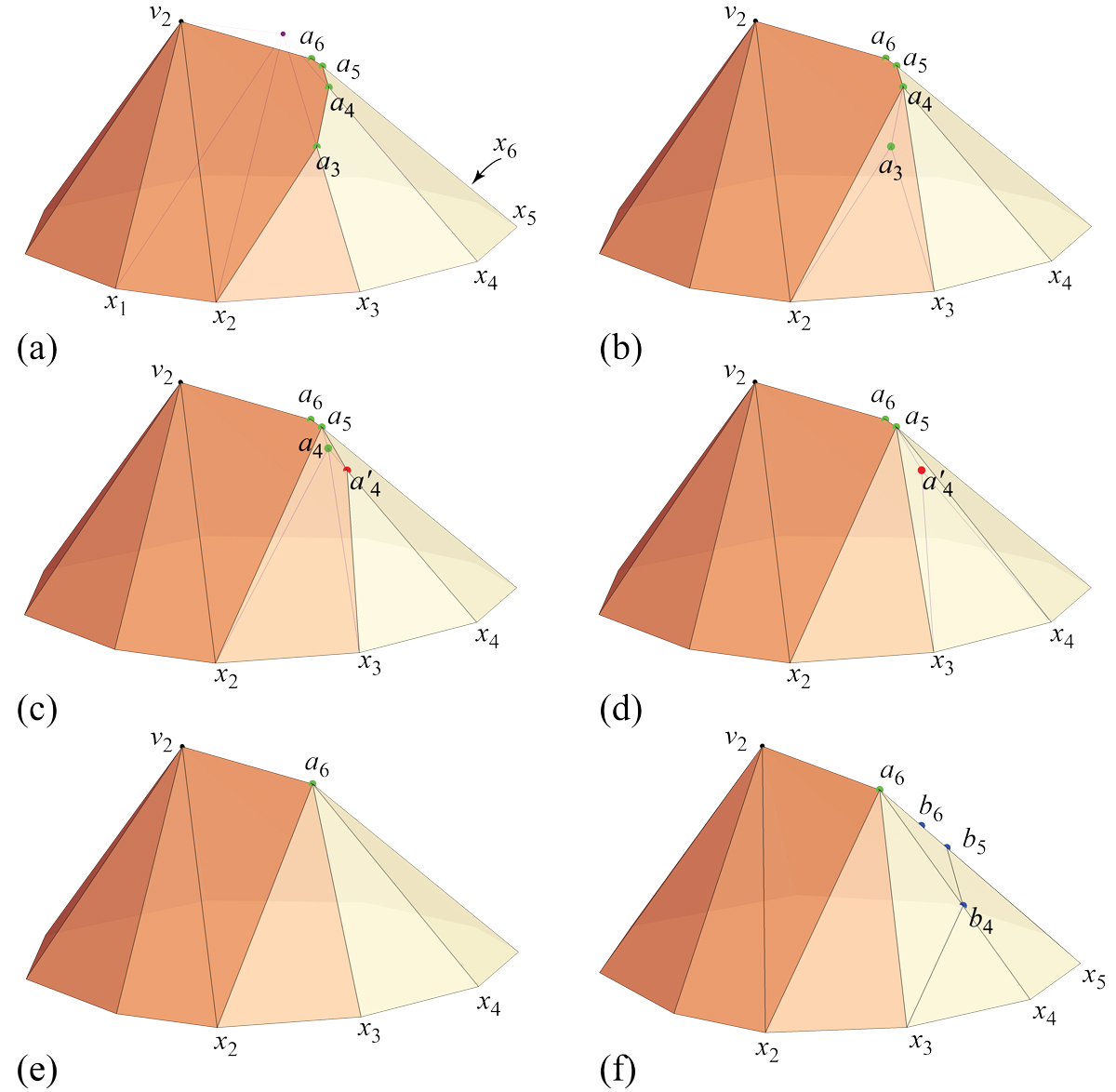}
\caption{
(a)~After slicing by the plane $\Pi_1=x_1x_2v_2$.
(b)~Removing $a_3$.
(c)~Removing $a_4$ creates $a'_4$.
(d)~Removing $a'_4$.
(e)~$a_6$ has replaced $v_1$ with one lower degree.
(f)~Intersection points with $\Pi_2=x_2 x_3v_2$.
}
\label{figSliceAway_123456}
\end{figure}

\noindent
\begin{proof}
Let $m$ be the number of vertices in the top-canopy $T$ of $G$.
If $m=1$, $G$ is already a pyramid, and we are finished. So assume $G$'s top-canopy $T$ has at least two vertices.
Choose $v_1$ to be a leaf of $T$ given by Lemma~\ref{canopy}, and $v_2$ its unique parent.
Let $v_1$ be adjacent to $k$ vertices of $X$.
If $G$ is a dome, $v_1$ has degree $k+1$; if $G$ is a g-dome, then possibly $v_1$ has degree $k+2$.
Since the later case changes nothing in the proof, we assume for the simplicity of exposition that $G$ is a dome.
Our goal is to remove $v_1$ through a series of pyramid subtractions.

Let the vertices of $X$ adjacent to $v_1$ be $x_1,x_2,\ldots,x_k$. 
Let $\Pi_1$ be the plane $x_1x_2v_2$.
This plane cuts into $G$ under $v_1$, and intersects the edges $v_1 x_i$, $i \ge 3$, in points $a_i$.
In Fig.~\ref{figSliceAway_123456}(a), those points are $a_3,a_4,a_5,a_6$.
Remove the pyramid whose apex is $v_1$ and whose base (in our example) is $x_1,x_2,a_3,a_4,a_5,a_6,v_2$.

We now proceed to reduce the chain of new vertices $a_3,\ldots,a_k$ one-by-one until only $a_k$ remains.

First, with the plane $x_2x_3a_4$, we slice off the tetrahedron whose degree-$3$ apex is $a_3$; Fig.~\ref{figSliceAway_123456}(b).
Next, with the plane $x_2x_3a_5$, we slice off the pyramid with apex $a_4$. 
Unfortunately, because $a_4$ has degree-$4$, this introduces a new vertex $a'_4$;  Fig.~\ref{figSliceAway_123456}(c).
So next we slice with $x_3x_4a_5$ to remove the tetrahedron whose degree-$3$ apex is $a'_4$; Fig.~\ref{figSliceAway_123456}(d).
Continuing in this manner, alternately removing a tetrahedron followed by a pyramid with a degree-$4$ apex, we reach Fig.~\ref{figSliceAway_123456}(e).

Note that $v_1$ was connected to $k=6$ vertices of $X$, but $a_6$ is only connected to $5$:
the connection of $v_1$ to $x_1$ has in a sense been transferred to $v_2$. 
In general, $a_k$ has degree one less than $v_1$'s degree, and the degree of $v_2$ has increased.

Now we repeat the process, starting by slicing with $\Pi_2 = x_2x_3v_2$, which intersects the $a_k x_i$ edges at $b_4,\ldots,b_k$.
We remove the pyramid apexed at $a_6$ with base (in our example) of $x_2,x_3,b_4,b_5,b_6,v_2$; Fig.~\ref{figSliceAway_123456}(f).
The same methodical technique will remove all but the last new vertex $b_k$, which replaces $a_k$ but has degree one smaller.

Continuing the process, slicing with $\Pi_i = x_ix_{i+1}v_2$, up to $i=k-1$, will lead to the complete removal of $v_1$, as previously illustrated 
in Fig.~\ref{figDomeSlice_begend}(b), completing the inner induction.
Induction on the number of vertices of the g-dome then completes the proof.

With $G_0=G$, each $G_j$ is the intersection of $G_{k-1}$ with a closed half-space containing a base edge, so it is convex for all $j=1,...,n$. 
Indeed each $G_j$ is a g-dome, because all untouched faces continue to meet $X$ in either an edge or a vertex, and new faces always share an edge with $X$.
\end{proof}

\begin{rmk}
The partition of a g-dome into pyramids given by Theorem~\ref{pyra-dome} is not unique. For our example in Fig.~\ref{figDomeSlice_begend}(a), 
we finally get the pyramid apexed at $v_2$ in Fig.~\ref{figDomeSlice_begend}(b), but we could as well have ended with a pyramid apexed at $v_1$.
\end{rmk}

\begin{rmk}
The partition of a g-dome into pyramids given by Theorem~\ref{pyra-dome} has special properties, such as: every slice plane containing a base edge, and
every intermediate shape being a g-dome.
Without those properties, 
proving that every g-dome may be partitioned into pyramids, would be easier.
\end{rmk}


\noindent
We will see in Section~\ref{secTailor_1} that one g-dome of $O(n)$ vertices reduces to $O(n^2)$ pyramids of constant size,
and $O(n)$ pyramids each of size $O(n)$.






\section{A rigidity result}
\label{secRigidity}
In this section we present a technical result for later use, which may be of independent interest.
The theorem says that two convex polyhedra that are isometric on all but the neighborhoods of one vertex are in fact congruent.
We also show this result cannot be strengthened: two convex polyhedra can differ in the neighborhoods of just two vertices.

\begin{thm}
\label{rigid}
Assume $P,Q$ are convex polyhedra with the same number of vertices, such that 
there are vertices $p\in P$ and $q\in Q$, and respective neighborhoods $N_p \subset P$, $N_q \subset Q$ not containing other vertices, 
and an isometry $\iota : P\setminus N_p \to Q \setminus N_q$.
Then $P$ is congruent to $Q$.
\end{thm}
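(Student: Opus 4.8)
The plan is to show that the isometry $\iota$ already determines almost everything about $P$ and $Q$, and that the ``missing piece'' around the single vertex is then forced. First I would observe that $P \setminus N_p$ is an open surface with a single boundary component (a small curve $\partial N_p$ around $p$), and similarly for $Q \setminus N_q$; since $\iota$ is an isometry of these intrinsic metrics, it carries $\partial N_p$ to a closed curve on $Q$ of the same length enclosing $q$, and the intrinsic geometry ``seen from outside'' is identical. The key point is that $P$ is the convex hull of $(P \setminus N_p) \cup \{p\}$ and the intrinsic metric of all of $P$ is determined by the intrinsic metric of $P\setminus N_p$ together with one number, the curvature $\omega(p)$. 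Indeed, by the Gauss--Bonnet theorem applied to the disk $N_p$, we have $\omega(p) = 2\pi - (\text{total turning of } \partial N_p \text{ measured in } P\setminus N_p)$, and this turning is an intrinsic quantity of $P \setminus N_p$ alone. Hence $\omega(p) = \omega(q)$, and then completing the surface across the boundary curve is forced: the metric disk bounded by $\partial N_p$ with a single cone point of the prescribed curvature is unique up to isometry (it is the metric cone), so the full intrinsic metrics of $P$ and $Q$ are isometric.

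The second step is then a direct appeal to Alexandrov's Gluing Theorem (AGT). Having established that $P$ and $Q$ are isometric as abstract polyhedral metrics on $S^2$ with all cone angles $\le 2\pi$, the uniqueness clause of AGT says that the convex polyhedron realizing such a metric is unique up to rigid motion and reflection in $\Rs$. Therefore $P$ is congruent to $Q$. The only subtlety to handle carefully is the bookkeeping about the neighborhood $N_p$: it contains no vertex other than $p$, so $N_p$ is genuinely a metric cone with apex $p$ (flat except at the single cone point), which is why the curvature is the sole degree of freedom and why the completion is canonical. I would also remark that the hypothesis ``same number of vertices'' is what rules out $\iota$ having, e.g., glued away $p$ onto a flat point; combined with the curvature-matching it guarantees $q$ is a genuine vertex with $\omega(q) = \omega(p) > 0$.

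I expect the main obstacle to be making rigorous the claim that the intrinsic metric on $P$ is reconstructible from the intrinsic metric on $P \setminus N_p$ plus the single curvature value — in particular, verifying that the ``cap'' one glues back in is intrinsically unique. The clean way is: pick a point $z$ on $\partial N_p$, develop the boundary curve and use that $N_p \setminus \{p\}$ is locally isometric to a flat cone of total angle $2\pi - \omega(p)$; any two such caps with the same boundary length and same cone angle are isometric rel boundary by an explicit developing map. Once that lemma is in hand, the rest is immediate from AGT. A secondary, more cosmetic obstacle is confirming that $\iota$ extends continuously to the boundary curves and respects their parametrization by arc length; this follows because an isometry between locally compact length spaces extends to the metric completion, and the completions add exactly the circles $\partial N_p$ and $\partial N_q$.
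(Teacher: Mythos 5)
Your two-step strategy---establish that $P$ and $Q$ carry isometric intrinsic metrics, then invoke the uniqueness clause of Alexandrov's Gluing Theorem---matches the paper's proof in outline, and the Gauss--Bonnet argument for $\omega(p)=\omega(q)$ is fine (the paper gets it even more quickly from the global identity $\sum_v\omega(v)=4\pi$ applied to both surfaces). The gap is in the claim you make in your ``clean way'' paragraph, that ``any two such caps with the same boundary length and same cone angle are isometric rel boundary.'' That is false: already with cone angle $2\pi$ (a cap with no cone point) a flat cap of perimeter $4$ could be a round disk or a unit square, and the same freedom persists when a genuine apex is present. The invariant you actually need is the full geodesic-curvature (turning) profile of $\partial N_p$ as a function of arclength, not just its total length. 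That profile \emph{is} preserved under the extension $\bar\iota$: the turning of $\partial N_p$ seen from inside the cap is the negative of the turning seen from inside $P\setminus N_p$, and the latter is an intrinsic quantity of $P\setminus N_p$ carried over by the isometry. With the correct invariant your developing-map argument does go through---develop the boundary arc (determined by its curvature function), locate the apex as the unique planar point equidistant from the two arc endpoints and subtending angle $2\pi-\omega$---and the resulting isometry of caps agrees with $\bar\iota$ on the boundary, so the glued surfaces are isometric. So your argument is repairable, but as written the uniqueness of the cap, which is the crux, is not established.

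The paper avoids this entirely by making the completion explicit. It star-unfolds $P$ and $Q$ from a generic point $x$, observes that outside a single triangular wedge $x_1\,u\,x_2$ meeting the missing vertex the two unfoldings $\bar P$ and $\bar Q$ coincide under $\iota$, notes that the wedge triangle itself is common to both, and then places $p$ inside the wedge on the angle bisector at the unique position forced by $\omega(p)=\omega(q)$. This pins down the entire planar unfolding and hence the intrinsic metric concretely, with no abstract argument about filling in a cap, and the final appeal to AGT uniqueness is the same.
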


\begin{proof}
The existence of $\iota$ on all but neighborhoods of $p$ and $q$ yields, in particular, that the curvatures 
$\o_P(p)$ of $P$ at $p$ and  $\o_Q(q)$ of $Q$ at $q$ are equal, to satisfy the curvature sum of $4\pi$ (by Gauss-Bonnet). 

Take a point $x \in P$ joined to each vertex of $P$ by precisely one geodesic segment.
Such an $x$ is easily found, because it is equivalent to the fact that no vertex of $P$ is interior to $C(x)$.
Moreover, we may choose $x$ such that $\iota(x)$ has the same property on $Q$.

Denote by $u$ the ramification point of $C(x)$ neighboring $p$ in $C(x)$, i.e., the ramification point of degree $\ge 3$ closest to $p$.
Let $v$ be the similar ramification point of $C(\iota(x))$ neighboring $q$ in $C(\iota(x))$.
Since $N_p$ and $N_q$ are small, we may assume they are disjoint from 
$u$ and $v$ and all the segments described above.

Star unfold $P$ with respect to $x$, and $Q$ with respect to $\iota(x)$, and denote by $\bar{P}$ and  $\bar{Q}$ the resulting planar polygons.
We'll continue to use the symbols $p$ and $q$, $u$ and $v$ to refer to the corresponding points in $\bar{P}$ and $\bar{Q}$ respectively.
Let $x_i$, $i=1,2$ be the images of $x$ surrounding $p$ in $\bar{P}$, and $\iota(x_i)$ the similar images in $\bar{Q}$.
See Fig.~\ref{figTent_star_Cx}(a,b).

By hypothesis, we have respective neighborhoods $\bar{N}_p \subset \bar{P}$ and $\bar{N}_q \subset \bar{Q}$ 
and an isometry $\bar{\iota}$ induced by $\iota$, with $\bar{\iota} : \bar{P} \setminus \bar{N_p} \to \bar{Q} \setminus \bar{N_q}$. 
Thus in Fig.~\ref{figTent_star_Cx}(b), all of $\bar{P}$ outside of the wedge $(x_1,u,x_2)$ is identical in $\bar{Q}$.
Therefore the triangles  ${x}_1{u}{x}_2$ and ${x}'_1 {v} {x}'_2$ are congruent.
Moreover, ${p}$ lies on the bisector of the angle $\angle {x}_1{u}{x}_2$, and ${q}$ lies on the bisector of the angle $\angle {x}'_1 {v} {x}'_2$.
Since $\angle  {x}_1 {p} {u} = \angle  {x}'_1 {q} {v} = \pi - \frac{1}{2}\o(p)= \pi - \frac{1}{2}\o(q)$, $p$ and $q$ are uniquely determined.
Consequently,  $\bar{P}$ and $\bar{Q}$ coincide, and refolding according to the same gluing identifications leads to congruent $P$ and $Q$.
\end{proof}

\begin{figure}
\centering
 \includegraphics[width=1.0\textwidth]{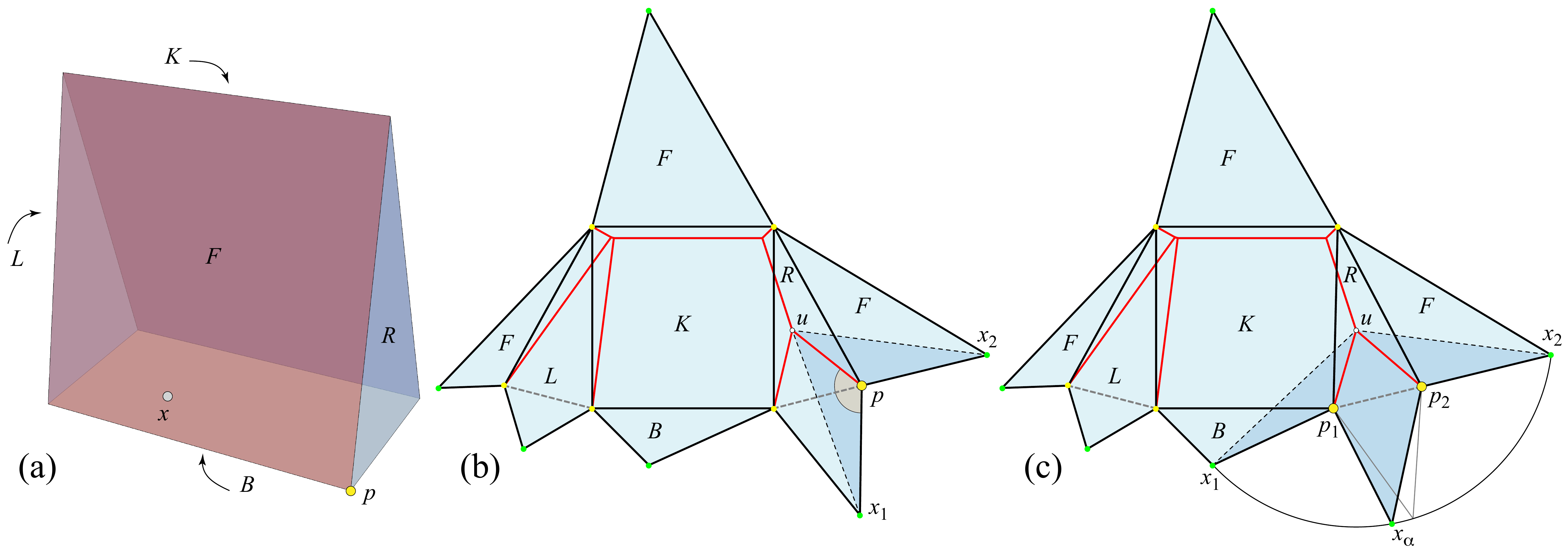}
\caption{
(a)~A $6$-vertex polyhedron $P$. The $F$ and $K$ faces are unit squares; 
$B$ is a $1 \times \frac{1}{2}$ rectangle, with $x \in B$.
(b)~Star-unfolding $\bar{P}$ of $P$. $\angle  {x}_1 {p} {u}$ is marked.
(c)~Moving $x_\alpha$ on the circle arc moves the bisectors
incident to $u$, and so moves $p_1$ and $p_2$.
Refolding results in a polyhedron incongruent to~(a).}
\label{figTent_star_Cx}
\end{figure}

\begin{rmk}
\label{no_2}
It is perhaps surprising that the above result cannot be extended to 
claim that isometries excluding neighborhoods of two vertices
always imply congruence.
\end{rmk}

\begin{proof}
If the points $p_1, p_2\in P$ and $q_1, q_2 \in Q$ do not have a common neighbor in $C(x)$ and $C(\iota(x))$ respectively, the above proof establishes rigidity.

Next we focus on $P$, and try to find positions for $p_1, p_2\in P$ determined by the hypotheses.
Assume, in the following, that  $p_1, p_2\in P$  have a common degree-$3$ ramification neighbor $u$ in $C(x)$.

Star unfold $P$ with respect to some $x \in P$,  to $\bar{P}$. 
The region of $\bar{P}$ exterior to the wedge $(x_1, u, x_2)$ is uniquely determined and identical in $\bar{Q}$.
See Fig.~\ref{figTent_star_Cx}(c).

Take a point $x_\alpha$ on the circle of center ${u}$ and radius $|x_1 u|=|x_2 u|$.
We now argue that positions of $x_\alpha$ on this circle allow $p_1$ and $p_2$ to vary while maintaining all outside of the $(x_1, u, x_2)$ wedge fixed.

Let $\angle {x}_\alpha{u}{x}_1 = 2 \alpha$.
On the bisector of that angle incident to $u$, one can uniquely determine a point ${p}_1$ such that $\angle  {x}_1 {{p_1}} {u} = \pi - \frac{1}{2}\o(p_1)$.
Similarly, one can uniquely determine a point ${p}_2$ on the bisector of that angle  $\angle {x}_\alpha{u}{x}_2$, such that $\angle  {x}_2 {{p_2}} {u} = \pi - \frac{1}{2}\o(p_2)$.

Thus we have identified a continuous $1$-parameter family of star unfoldings, and consequently of convex polyhedra, verifying the hypotheses.
\end{proof}


\section{Tailoring and sculpting}
\label{tas}

Having established in Theorem~\ref{pyra-dome} that g-domes can be partitioned into pyramids,
the goal of this section is to prove that removal of a pyramid, i.e., a vertex truncation, can be achieved by (digon-)tailoring.
We reach this in Lemma~\ref{VertexTruncation}: 
a degree-$k$ pyramid can be removed by $k-1$ tailoring steps, each step excising one vertex by removal of and then sealing a digon.
We start with Lemma~\ref{vertex_small} which claims the result but only under the assumption that the slice plane is close to the removed vertex.
Although this lemma is eventually superseded, it establishes the notation and the main idea.
Following that, Lemma~\ref{pyramid} removes the ``sufficiently small" assumption of Lemma~\ref{vertex_small}, but in the special case of $P$ a pyramid. 
Finally we reach the main claim in Lemma~\ref{VertexTruncation},
which shows this special case encompasses the general case.

In the following, we use $\partial S$ to indicate the $1$-dimensional boundary of a $2$-dimensional surface patch $S$.


\subsection{Small volume slices}

\begin{lm}
\label{vertex_small}
Let $P$ be a convex polyhedron, and $Q$ the result obtained by slicing $P$ with a plane $\Pi$ at sufficiently small distance to a vertex $v$ of $P$, and removing precisely that vertex.
Then $Q$ can be obtained from $P$ by $k-1$ tailoring steps.
\end{lm}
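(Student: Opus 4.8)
The plan is to truncate the degree-$k$ vertex $v$ one ``ear'' at a time, exposing a digon at each step whose removal-and-suture realizes one vertex truncation, and whose endpoints are chosen so that after all $k-1$ steps the accumulated cuts exactly reproduce the single plane slice by $\Pi$. Let $v$ have incident faces $f_1,\dots,f_k$ in cyclic order, and let the slice plane $\Pi$ meet the edges $e_i=vu_i$ at points $p_i$, so that $\Pi$ cuts $P$ along the planar convex polygon $p_1 p_2 \cdots p_k$, whose removal leaves $Q$. Because $\Pi$ is at sufficiently small distance from $v$, the cap $C$ removed (the pyramid with apex $v$ and base $p_1\cdots p_k$) is a genuine pyramid, and each $p_i$ lies in the relative interior of the edge $e_i$.

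First I would set up the combinatorics of one step. Pick three consecutive faces around $v$, say $f_{k-1},f_k,f_1$, sharing edges $e_{k-1},e_k,e_1$ with $e_k$ the middle one. Consider the geodesic digon $D=D(x,y)$ that encircles $v$, where $x=p_{k-1}$ on $e_{k-1}$ and $y=p_1$ on $e_1$, and whose two bounding geodesic segments are $\gamma$, running from $x$ to $y$ ``over the top'' crossing $e_k$, and $\gamma'$, running from $x$ to $y$ the short way through the base of the future slice. The point is to choose $\gamma$ precisely as the pair of segments $x p_k \cup p_k y$ (a path through $p_k$, which will be a geodesic once the region is flattened, since $p_k$ lies on edge $e_k$ with total angle $\le 2\pi$ to either side — here the smallness of the slice is used to guarantee the two sub-segments make an angle $\ge\pi$ on the side of $v$, so this really is a shortest path) and to choose $\gamma'$ as the segment of $\Pi$, namely $x y$ along the base polygon edge $p_{k-1}p_1$... wait — that last segment must have the \emph{same length} as $\gamma$; so instead I would let $\gamma'$ be the reflected copy obtained by the standard unfolding trick. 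Concretely: unfold $f_{k-1},f_k,f_1$ (and the sliver of the base cap between them) into the plane about $x$; the vertex $v$ appears, $\gamma$ appears as a polygonal path, and one defines $\gamma'$ on the opposite side as the mirror path of equal length. Excising the digon between $\gamma$ and $\gamma'$ and suturing is exactly a single-vertex tailoring: its endpoints $x,y$ become new vertices (or, in a later step, coincide with already-created cut points), its interior contains only $v$, and by AGT the result $P_1$ is a convex polyhedron. One checks directly that $P_1$ is isometric to the polyhedron obtained from $P$ by the partial slice that removes the tetrahedron $v x p_k y$ — i.e., the plane through $p_{k-1},p_k,p_1$.

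Then I would iterate. After step $1$ the apex $v$ has been replaced by a vertex of degree $k-1$ sitting on the partial slice; steps $2,\dots,k-1$ repeat the construction on the shrinking fan, each time shearing off one more tetrahedron (= one more triangle of the base polygon $p_1\cdots p_k$), the $j$-th digon having one endpoint among the already-exposed points $\{p_i\}$ and one fresh point, until after $k-1$ steps the entire pyramidal cap $C$ has been removed and what remains is exactly $Q$. The bookkeeping that must be verified at each step: (a) the ``over the top'' path is still a geodesic segment on the current polyhedron — this is where ``$\Pi$ sufficiently close to $v$'' does the work, keeping all the relevant face angles on the $v$-side summing to $\ge\pi$ across each crossed edge so no shortcut exists; (b) the digon encircles exactly the one remaining copy of $v$ and no other vertex (the $p_i$'s created earlier are vertices, but they sit on the boundary of the digon, not its interior); (c) the mirror/equal-length construction of $\gamma'$ is consistent with the flat metric of the base polygon, so that suturing reproduces precisely the next plane cut. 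The main obstacle is (a) together with making precise the claim that the successive digon-suturings compose to the single slice: I would handle it by working entirely in the star-unfolding / sequence-of-unfoldings picture (Lemmas~\ref{*unfCL} and~\ref{Truncation}), where ``cut along $\gamma\cup\gamma'$ and zip closed'' becomes a cut-and-rejoin of planar polygons, and where Lemma~\ref{Truncation} identifies the new cut locus as a truncation of the old — so that after $k-1$ truncations the cut locus, and hence the polyhedron, is exactly that of $Q$. Everything else (convexity, finiteness, that each step is a legal tailoring) follows from AGT and the angle-deficit computation showing the digon endpoints carry curvature $<2\pi$.
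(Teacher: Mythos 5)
Your construction of the digon is not legitimate, and this gap is fatal to the whole approach. You prescribe one bounding path $\gamma$ of the digon to be the broken path $x p_k \cup p_k y$ crossing the edge $e_k$ at the interior point $p_k$. For a path on a polyhedral surface to be a geodesic segment (which, by the paper's definition of digon, must be a \emph{shortest} geodesic), it must be straight at every non-vertex crossing: unfolding the two faces adjacent to $e_k$ into the plane, the path must make an angle exactly $\pi$ at $p_k$ on both sides. Generically $x p_k y$ bends at $p_k$: either the angle toward $v$ is $<\pi$, in which case it is not even locally shortest, or it is $>\pi$, in which case the path can be shortened by pushing it toward $v$. Your invocation of ``smallness of the slice $\Rightarrow$ angle $\geq\pi$'' does not repair this; a one-sided inequality never gives straightness. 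The companion ``mirror path of equal length'' $\gamma'$ is not constructed at all, and you never verify that any genuine pair of shortest geodesics from $p_{k-1}$ to $p_1$ bounds a digon containing exactly $v$ in its interior. A second, independent gap is the assertion that one digon excision ``is isometric to the polyhedron obtained from $P$ by the partial slice'' through $p_{k-1},p_k,p_1$ --- you say ``one checks directly,'' but nothing checks it: a plane slice through three points creates three new vertices (at $p_{k-1}$, $p_k$, $p_1$), whereas your digon removal creates only two (at its endpoints) and keeps $p_k$ strictly in the interior of the excised region or on its boundary without being forced to become a vertex. These two polyhedra have different vertex sets in general.

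The paper's proof sidesteps both problems. It never insists that both digon endpoints lie among the slice points $p_i$: it fixes one endpoint at $x_i = p_i$, and chooses the other endpoint $y_i$ on the cut locus $C(x_i)$, on the leaf edge incident to the vertex being encircled. That choice makes the two bounding paths shortest geodesics by definition of the cut locus, so the digon is automatically legitimate. The angle of the digon at $x_i$ is then tuned to exactly $\omega_Q(x_i)-\omega_P(x_i)$, so that each step installs the correct curvature at each $x_i$; the new vertex $y_i$ ``replaces'' the old one and is \emph{not} claimed to be any $p_j$. Only after all $k-1$ steps, when Gauss--Bonnet forces the final vertex $y_{k-1}$ to have the correct curvature $\omega_Q(x_k)$, is the rigidity result (Theorem~\ref{rigid}) invoked to conclude $y_{k-1}=x_k$ and hence $P_{k-1}\cong Q$. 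That rigidity step is the essential closing move, and your proposal has no analogue of it. I recommend you abandon the ``digon boundary along prescribed slice segments'' idea and instead let the cut locus dictate where the digon endpoint sits, matching curvature increments step by step.
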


\begin{proof}
Let the vertex $v$ to be removed have degree $k$ in the $1$-skeleton of $P$.
Let $e_i$, $i=1,\ldots,k$, be the edges incident to $v$, and $x_i$ the intersection of the slicing plane $\Pi$ with those edges:
$\{x_i\} = \Pi \cap e_i$.

We will illustrate the argument with the right triangular prism shown in Fig.~\ref{figSculpting}, 
where $k=3$ and $\Pi = x_1 x_2 x_3$.
Note that we do not exclude the case when some (or all) of the $x_i$ are vertices of $P$.

\begin{figure}
\centering
 \includegraphics[width=0.5\textwidth]{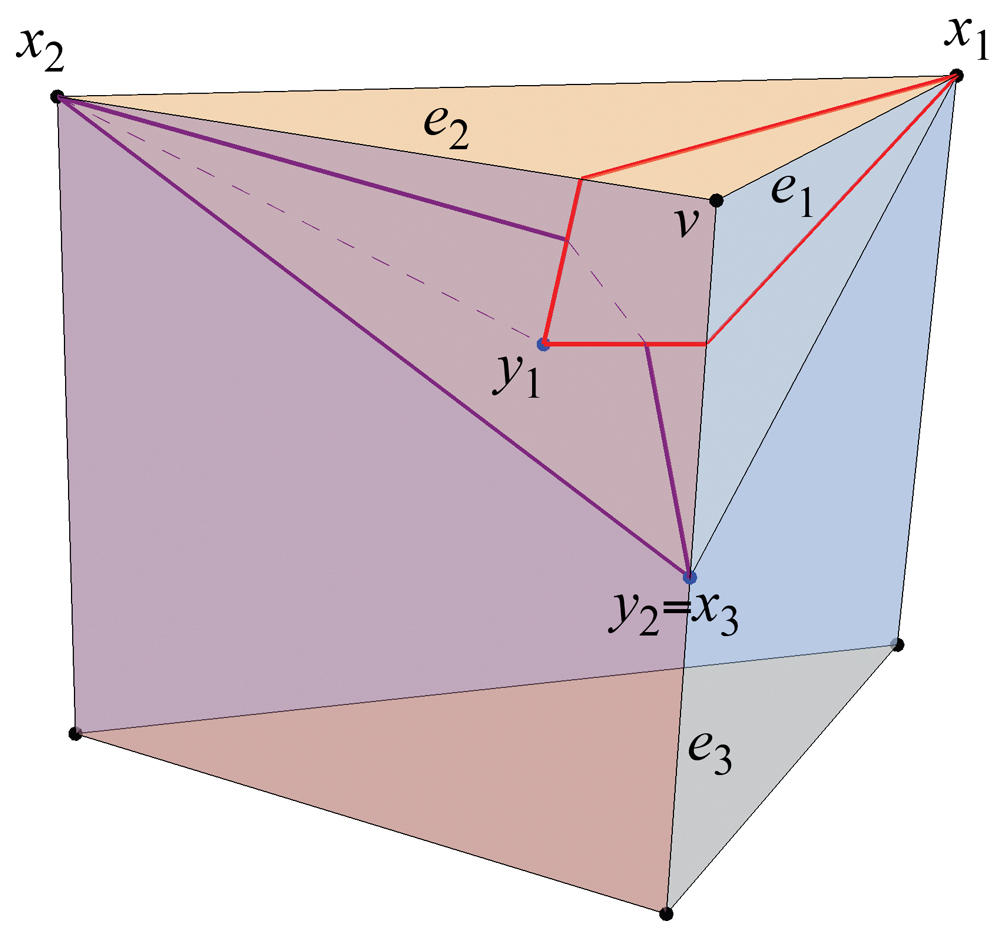}
\caption{$P$ is a prism whose top face is an isosceles right triangle.
The truncated vertex $v$ has degree $k{=}3$.
$x_3$ is the midpoint of $e_3$.
Digon $D_1 \supset \{v\}$ is shown red; $D_2 \supset \{y_1\}$ is purple.
$D_1$ is sutured closed before $D_2$ is excised.
The last replacement vertex $y_{k-1}$ must be identical to $x_k$.
}
\label{figSculpting}
\end{figure}

Denote by $\o_P(x_i)$ $i=1,\ldots,k$ the curvatures of $P$ at $x_i$, and by $\o_Q(x_i)$ the corresponding curvatures of $Q$.
The curvature $\o_P(v)$ will be distributed to the $x_i$.

In the figure, $\o_P(v)=90^\circ$, the curvatures of the three $x_i$ are $135^\circ,135^\circ,0^\circ$ in $P$,
and approximately $156^\circ,156^\circ,48^\circ$ in $Q$.
Indeed the increases sum to $90^\circ$: $21^\circ+21^\circ+48^\circ$.

The goal now is to excise $k-1$ digons with one end at $x_1,x_2,\ldots,x_{k-1}$,
removing precisely the surface angle needed to increase $\o_P(x_i)$ to $\o_Q(x_i)$.
After digon removals at $x_1,\ldots,x_i$, we call the resulting polyhedron $P_i$.

Let a digon with endpoints $x_i$ and $y_i$ be denoted $D_i=(x_i,y_i)$.
Cut out from $P$ the digon $D_1=(x_1,y_1)$ containing only the vertex $v$ in its interior, of angle at $x_1$ equal to $\o_Q(x_1) - \o_P(x_1)$.
By the assumption that the slice plane $\Pi$ is sufficiently close to $v$, the curvature difference is small enough so that $D_1$ includes only $v$.
Again by the sufficiently-close assumption, we may assume the digon endpoint $y_1$ lies on the edge of $C(x_1)$ incident to $v$, prior to the first ramification point of $C(x_1)$.
After suturing closed the digon geodesics, $y_1$ becomes a vertex of curvature $\o_P(v)-( \o_Q(x_1) - \o_P(x_1) )$.
In the figure, $y_1$ has curvature $90^\circ - 21^\circ \approx 69^\circ$. In a sense, $y_1$ ``replaces" $v$.

Next cut out a digon $D_2=(x_2,y_2)$ containing only the vertex $y_1$ in its interior, of angle at $x_2$ equal to $\o_Q(x_2) - \o_P(x_2)$.
The newly created vertex $y_2$  ``replaces" $y_1$.
Continue cutting out digons $D_i=(x_i,y_i)$ up to $i=k-1$, each $D_i$ surrounding $y_{i-1}$, and replacing $y_{i-1}$ with $y_i$.

Because these tailorings have sharpened the curvatures  $\o_P(x_i)$ to match the after-slice curvatures $\o_Q(x_i)$,
it must be that the curvature at the last replacement vertex $y_{i-1}$
is the same as the curvature at $x_k$: $\o_P(y_{k-1}) = \o_Q(x_k)$ (to satisfy Gauss-Bonnet).
So now the tailored $P_{k-1}$ matches $Q$ in both the positions of the vertices $x_i$, $i=1,\ldots,k-1$, and their curvatures; 
the only possible difference is the location of $y_{i-1}$ compared to $x_k$.
But the rigidity result, Theorem~\ref{rigid}, implies that $y_{k-1}=x_k$, and $P_{k-1}$ and $Q$ are now congruent.
\end{proof}

The ``sufficiently-small" assumption in the preceding proof allowed us to assume that the digon $D_i=(x_1,y_1)$ endpoint $y_1$ lay 
on the segment of $C(x_i)$ incident to $v$ prior to the first ramification point $a_1$ of $C(x_1)$. 
Recall that $\o(x_1)+\o(y_i)=\o(v)$, and the further along the segment $va$ that $y_1$ lies, the larger the digon angle at $x_1$.
The procedure would be problematic if the digon angle at $x_1$ were not large enough even with $y_1$ at that ramification point $a_1$.
The next lemma removes the sufficiently-small assumption in the special case when $P$ is itself a pyramid, and the vertex truncation reduces $P$ its base, doubly covered.
Following this, we will show that the case when $P$ is a pyramid is the ``worst case," and so the general case follows.


\subsection{Pyramid case}

\begin{lm}
\label{pyramid}
Let $P$ be a pyramid over base $X$.
Then one can tailor $P$ to reduce it to $X$ doubly covered, using $k-1$ digon removal steps.
\end{lm}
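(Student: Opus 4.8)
The plan is to reduce Lemma~\ref{pyramid} to the already-proven Lemma~\ref{vertex_small} by introducing an auxiliary intermediate polyhedron, thereby removing the ``sufficiently small'' hypothesis. Concretely, since $P$ is a pyramid over base $X$ with apex $v$ of degree $k$, slicing $P$ by a plane $\Pi$ parallel to $X$ and very close to $v$ produces a new pyramid $P^{*}$ over a small base $X^{*}$ similar to $X$, with $P^{*}\subset P$; by Lemma~\ref{vertex_small} this slice is realizable by $k-1$ tailoring steps. It then suffices to tailor $P^{*}$ down to $X$ doubly covered, and to show this too can be done by tailorings that \emph{compose} with the first batch so the whole thing still uses only $k-1$ digon removals aimed at the correct $x_i$. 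The cleaner route, which I would actually pursue, is to argue directly: set $x_i = \Pi\cap e_i$ with $\Pi$ now the base plane itself (so $x_i$ are exactly the base vertices), and run the \emph{same} digon-excision scheme as in Lemma~\ref{vertex_small}, removing digons $D_i=(x_i,y_i)$ surrounding $y_{i-1}$ (with $y_0=v$), each sharpening $\o(x_i)$ from its value in $P$ (a nonzero face angle deficit, since $X$ is a proper base) up to $2\pi$ minus the interior angle of $X$ at $x_i$, i.e. up to the curvature of the doubly covered polygon at that rim vertex.

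The key structural point to verify is that at each stage the needed digon angle at $x_i$ is \emph{attainable}, i.e. that there is enough surface angle concentrated at the current ``replacement vertex'' $y_{i-1}$ to be harvested by a digon from $x_i$ whose far endpoint $y_i$ lies on the cut locus segment $C(x_i)$ emanating from $y_{i-1}$, before its first ramification point. In the sufficiently-small regime this was automatic; here it is not, and this is where Lemma~\ref{lemAngles} and Cauchy's Arm Lemma (in the reflex-angle form of~\cite{o-ecala-01}, as flagged before the statement) enter. The idea: star-unfold the current polyhedron $P_{i-1}$ with respect to $x_i$; the portion of $C(x_i)$ near $y_{i-1}$ bounds a wedge, and as one slides the far digon endpoint $y_i$ outward along that cut-locus edge toward the ramification point $a$, the digon angle at $x_i$ increases monotonically (this is the Cauchy-Arm-type opening: opening the chain of unfolded images of $y_{i-1}$ lengthens/opens, and Lemma~\ref{lemAngles} controls that the angle actually swept is at least the full curvature to be removed). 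One must check the angle swept when $y_i$ reaches $a$ is $\ge \o_{P_{i-1}}(y_{i-1}) - (\text{residual curvature needed at the end})$, and crucially that the \emph{pyramid} case is extremal: because all of $P$'s curvature is at the apex and gets funneled straight down the lateral edges to the base, the cut loci are as ``long'' and the available angles as generous as possible, so if it works for a pyramid it works in general (this is the remark preceding the lemma, to be made precise in Lemma~\ref{VertexTruncation}).

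Granting attainability, the scheme terminates: after the $(k-1)$st excision, $P_{k-1}$ agrees with the doubly covered $X$ in the positions and curvatures of $x_1,\dots,x_{k-1}$, and by Gauss--Bonnet the curvature at the final replacement vertex $y_{k-1}$ equals the doubly-covered-$X$ curvature at $x_k$. Then, exactly as in Lemma~\ref{vertex_small}, the rigidity Theorem~\ref{rigid} forces $y_{k-1}=x_k$ and $P_{k-1}$ congruent to $X$ doubly covered. (Here one uses Lemma~\ref{path}: the doubly covered polygon is precisely the polyhedron whose cut locus from a rim point is a path, and Lemma~\ref{Truncation} tracks how each tailoring truncates the cut locus $C(x_i)$, so one can see the cut locus degenerating to a path in the limit — a useful sanity check and, if needed, an alternative to invoking rigidity at the last step.)

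The main obstacle I expect is precisely the attainability/monotonicity step: proving rigorously that a single digon from $x_i$, with its far endpoint constrained to the relevant cut-locus edge of $C(x_i;P_{i-1})$, can absorb \emph{all} of the required curvature $\o_Q(x_i)-\o_{P_{i-1}}(x_i)$ in one shot. This needs (a) the correct reflex-angle Cauchy Arm statement and a clean identification of which planar chain is being opened in the star unfolding, and (b) the comparison argument that the pyramid is the worst case, which requires care because the cut locus of $x_i$ changes shape after each excision (controlled by Lemma~\ref{Truncation}) and one must ensure no ramification point of $C(x_i)$ intrudes too close to $y_{i-1}$ prematurely. Everything else — the bookkeeping of curvatures summing correctly, convexity of intermediate surfaces via AGT, and the final rigidity punch — is routine given the tools already assembled in the excerpt.
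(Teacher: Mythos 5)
Your proposal correctly identifies the crux of the lemma --- showing that the digon angle $\a_{i+1}$ harvestable from $x_{i+1}$, with $y_{i+1}$ constrained to the cut-locus edge from $y_i$ to the first ramification point $a_{i+1}$, is at least $\q^L_{i+1}-\q^X_{i+1}$ --- and you correctly note that a direct ``same scheme as Lemma~\ref{vertex_small} plus rigidity for the last vertex'' strategy is the right overall shape. But there are two genuine gaps.

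First, you never isolate or prove the paper's preliminary Claim~(1): that $C(x_{i+1},P_i)\subset L_i$, i.e.\ the cut locus never wanders into the base part $X_i$. Without this, the digon $D_{i+1}$ from $x_{i+1}$ to $a_{i+1}$ might not live in $L_i$ at all, and the whole angle-bookkeeping loses its meaning (one cannot even say $\a_{i+1}$ is ``removed from $\q^L_{i+1}$''). This is precisely where the paper invokes the reflex-angle extension of Cauchy's Arm Lemma~\cite{o-ecala-01}: not to make the digon angle grow monotonically (that part is elementary), but to show by contradiction that a geodesic from $x_{i+1}$ crossing $L_i$ cannot reach a cut point strictly interior to $X_i$, by comparing a planar chain with angles $\q^X_j$ against one with angles $\q^L_j\in[\q^X_j,\,2\pi-\q^X_j]$. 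You cite the reflex-angle Cauchy Arm but aim it at the wrong target.

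Second, for the attainability claim itself (the paper's Claim~(2)) your route --- slide $y_{i+1}$ toward $a_{i+1}$ and argue monotonic growth of the digon angle up to some bound, with Lemma~\ref{lemAngles} ``controlling the angle swept'' --- is left unclosed, and indeed you flag it as the main obstacle. Lemma~\ref{lemAngles} is only a spherical triangle inequality and does not yield the needed lower bound. The paper instead makes Claim~(2) a clean exact computation: remove the path $\r_{i,i+2}\subset C(x_{i+1})$, excise \emph{all} the resulting side-digons (those hanging off $\r$ into $X_i$, with total angle $\D$) \emph{and} $D_{i+1}=D(x_{i+1},a_{i+1})$, and apply Lemma~\ref{Truncation} plus Lemma~\ref{path} to conclude the result $P^*$ is a doubly covered polygon. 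Equality of the two angles at $x_{i+1}$ on $P^*$ then gives $\q^L_{i+1}-\a_{i+1}=\q^X_{i+1}-\D\le\q^X_{i+1}$, which is exactly $\a_{i+1}\ge \q^L_{i+1}-\q^X_{i+1}$. You mention Lemma~\ref{path} and Lemma~\ref{Truncation} only parenthetically as a ``sanity check,'' whereas in the paper they \emph{are} the engine for Claim~(2). Finally, as a smaller point: deferring the ``pyramid is worst case'' comparison to Lemma~\ref{VertexTruncation} is right (that is where it belongs), so you needn't have listed it among the obstacles to \emph{this} lemma.
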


\begin{proof}
We continue to use the notation in the previous lemma, and introduce further notation needed here.
Let $L= P \setminus X$ be the lateral sides of the pyramid $P$; so $P= L \cup X$.
After each digon $D_i=(x_i,y_i)$ is removed and sutured closed, the convex polyhedron guaranteed by
Alexandrov's Gluing Theorem will be denoted by $P_i$.
We continue to view $P_i$ as $P_i= L_i \cup X_i$, even though already $P_1$, is in general no longer a pyramid. 
We will see that all the digon excisions occur on $L_i$, while $X_i$ remains isometric to the original base $X$, but no longer (in general) planar.

We will use $C(x_i, P_j)$ to mean the cut locus of $x_i$ on $P_j$. 
Regardless of which $P_j$ is under consideration, we will denote by $a_i$
the first ramification point of $C(x_i)$ immediately beyond the
vertex $y_{i-1}$ surrounded by the digon $D_i=(x_i,y_i)$.

We need to establish two claims:
\begin{description}
\item [Claim~(1):] The cut locus $C(x_{i+1},P_i)$ is wholly contained in $L_i$.
\item [Claim~(2):] The digon angle $\a_{i+1}$ at $x_{i+1}$ to $a_{i+1}$ is large
enough to reduce the $L$-angle at $x_{i+1}$ to its $X$-angle on the base.
\end{description}

Before addressing the general case of these claims, we illustrate the situation
for $x_1$,  referencing Fig.~\ref{figPent_unf}.
\begin{figure}
\centering
 \includegraphics[width=\textwidth]{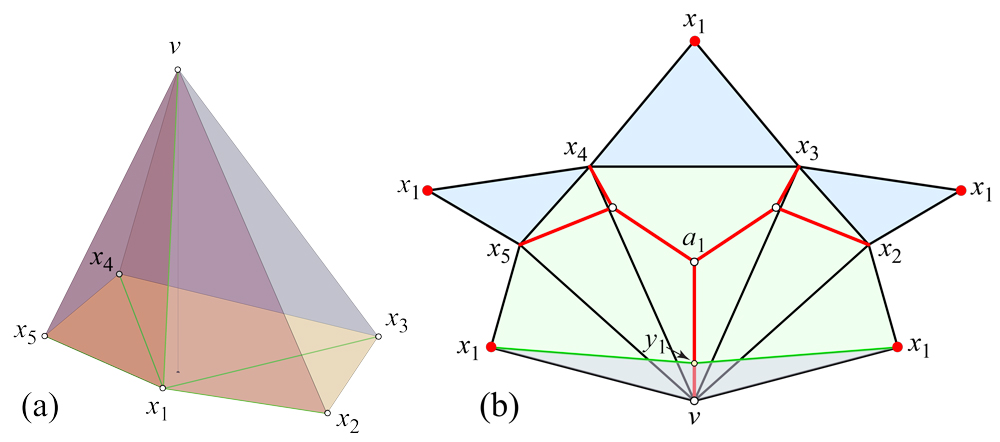}
\caption{(a)~A pyramid with pentagonal base $X$. Shortest paths from $x_1$ to all vertices
are marked green.
(b)~The star-unfolding with respect to $x_1$. 
The triangles from $X$ are blue; those from $L$ are green.
$C(x_1) \subset L$ is red.
The digon $D_1=(x_1,y_1)$ is shaded.
}
\label{figPent_unf}
\end{figure}
The digon $D_1(x_1,y_1)$ surrounding $v$ places $y_1$ on the $v a_1$ segment
of $C(x_1,P)=C(x_1)$. If one imagines $y_1$ sliding along $v a_1$ from $v$ to $a_1$, the digon angle at
$x_1$, call it $\d_1$, increases.
To show that $y_1$ can be placed so that $\d_1$ is large enough to reduce the angle
at $x_1$ to its angle in $X$ will require $a_1$ to lie in $L$ (rather than in $X$).

It turns out that $C(x_1) \subset L$ follows from a lemma in~\cite{aaos-supa-97}.\footnote{
Lem.~3.3: the cut locus is contained in the ``kernel" of the star-unfolding which in our case
is a subset of $L$.}
However, after removing $D_1$ and invoking Alexandrov's Gluing Theorem,
we can no longer apply this lemma.
With this background, we now proceed to the general case.

\paragraph{Claim~(1): $C(x_{i+1},P_i) \subset L$.}
Assume we have removed digons at $x_1,\ldots,x_i$, so that
$P_i = X_i \cup L_i$, and $L_i$ contains one vertex $y_i$,
the endpoint of the last digon $D_i$ removed, and $X_i$ contains no vertices.
Assume to the contrary of Claim~(1) that $C(x_{i+1},P_i)=C(x_{i+1})$ includes a point $z$
strictly interior to $X_i$. 
Because $z \in C(x_{i+1})$, there are two geodesic segments from $x_{i+1}$
to $z$, call them $\g^z_1$ and $\g^z_2$.
Because $X_i$ contains no vertices, it cannot be that both 
$\g^z_1$ and $\g^z_2$ are in $X_i$.
Say that $\g^z_1$ crosses $L_i$.
Let $p \in \partial X$ be the first point at which $\g^z_1$ enters $X_i$,
and let $\g_1 \subset \g^z_1$ be the portion from $x_{i+1}$ to $p$.
See Fig.~\ref{Claim1Abstract}.
\begin{figure}
\centering
 \includegraphics[width=0.75\textwidth]{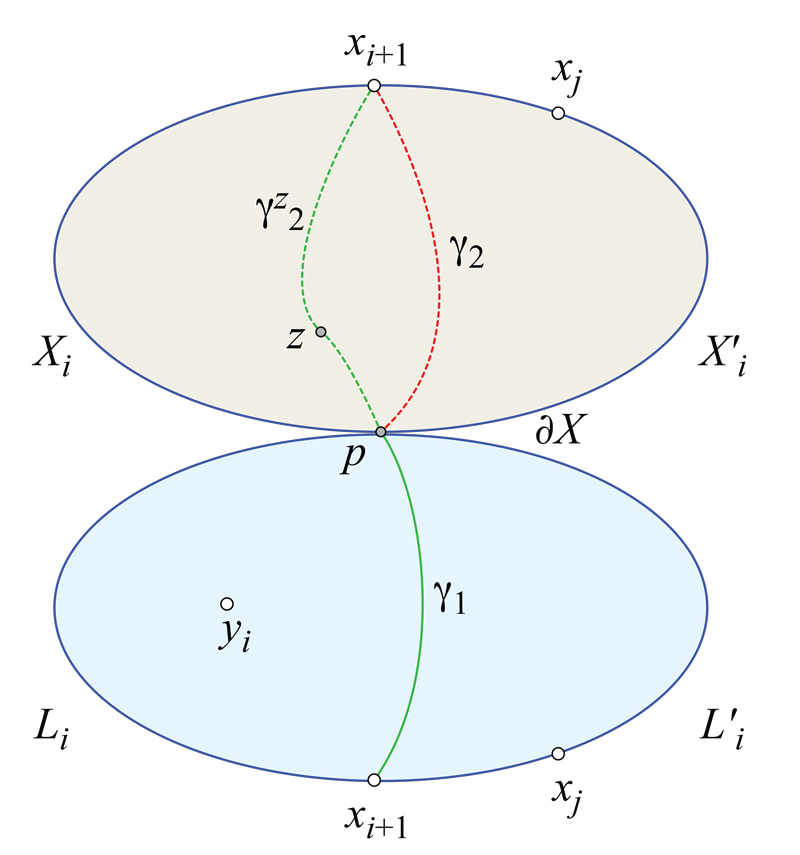}
\caption{$X_i$ is flipped above $L_i$ in this abstract illustration.
Cauchy's Arm Lemma ultimately shows that $|\g_1| \ge |\g_2|$.}
\label{Claim1Abstract}
\end{figure}

The geodesic segment $\g_1$ divides $L_i$ into two parts; let $L'_i$ be the part that does not contain the vertex $y_i$.
Join $x_{i+1}$ to $p$ with a geodesic $\g_2$ lying in $X_i$.
$\g_2$ was a shortest path to $p$ in $X$, but may no longer be shortest in $X_i$.
$\g_2$ also divides $X_i$ into two parts; let $X'_i$ be the part sharing a portion of $\partial X$ with $L'_i$.

Now we will argue that $|\g_1| \ge |\g_2|$.
This will yield a contradiction, for the following reasons.
$\g_1$ is a shortest geodesic to $p$, because it extends to $\g^z_1$, which
is a shortest geodesic to $z$. So $\g_2$ cannot be strictly shorter than $\g_1$.
Therefore we must have $|\g_1| = |\g_2|$, which implies that $p \in C(x_{i+1})$.
But then $\g_1$ cannot continue to $\g^z_1$ beyond $p$,
as $p$ is a cut point.

To reach $|\g_1| \ge |\g_2|$, we will use an extension of Cauchy's Arm Lemma.
Let $\q^X_j$ be the angle at $x_j$ in $X_i$, and $\q^L_j$ be the angle at $x_j$ in $L_i$.
For $j > i+1$, we know that $\q^L_j > \q^X_j$ because $P$ is a pyramid (see Lemma~\ref{lemAngles}) and digon removal has not yet reached $x_j$.
We also know that $\q^X_j \le \pi$ because $X$ is convex.
However, $\q^L_j$ could be nearly as large as $2 \pi - \q^X_j$ if the pyramid $P$'s apex $v$ projects outside the base $X$.

Let $c_X$ be the planar convex chain in $\partial X$ that corresponds to $X'_i$; assume $c_X$ is
$x_{i+1}, x_{i+2}, \ldots, x_j, \ldots, p$ with angles $\q^X_j$. 
(The case where $c_X$ is includes the other part of $\partial X$, $x_{i+1}, x_i, \ldots, p$ can be treated analoguously.)
Then $|\g_2|$ is the length of the chord between $c_X$'s endpoints.
In order to apply Cauchy's lemma, we rephrase the angles at $x_j$ as turn angles $\t_j = \pi - \q^X_j$.
Cauchy's lemma then says that if the chain angles are modified so that the turn angles lie within the range $[0,\t_j]$, then the endpoints chord length
cannot decrease. Roughly, opening (straightening) the angles stretches the chord.
The extension of Cauchy's lemma reaches the same conclusion if the chain angles are modified so that the turn angles lie within $[-\t_j,\t_j]$.

Define $c_L$ as the planar (possibly nonconvex) chain composed of the same vertices $x_j$ that define $c_X$, but with angles $\q^L_j$.
Because $\q^L_j \le 2 \pi - \q^X_j$, the turn angles $\pi - \q^L_j$ in $c_L$ satisfy
$$
\pi - \q^L_j \ge \pi - (2 \pi - \q^X_j) = -(\pi - \q^X_j) = - \t_j \;.
$$
Also, because $\q^L_j > \q^X_j$, 
$$
\pi - \q^L_j \le \pi - \q^X_j = \t_j \;.
$$
So the $c_L$ turn angles are in $[-\t_j,\t_j]$, and we can conclude from Theorem~1 in~\cite{o-ecala-01}
that the $c_L$ endpoints chord length $|\g_1|$ is at least $|\g_2|$, the $c_X$ endpoints chord length.

We have now reached $|\g_1| \ge |\g_2|$, whose contradiction described earlier shows that indeed $C(x_{i+1}) \subset L_i$.

\paragraph{Claim~(2): $\a_{i+1} \ge \q^L_{i+1} - \q^X_{i+1}$.}
Recall that $\a_{i+1}$ is the angle at $x_{i+1}$ of the digon from $x_{i+1}$
to $a_{i+1}$, the first ramification point of $C(x_{i+1})$ beyond the vertex $y_i$.
The claim is that $\a_{i+1}$ is large enough to reduce $ \q^L_{i+1}$ to $\q^X_{i+1}$.
We establish this by removing a path from $C(x_{i+1})$ and tracking angles,
as follows.

From Claim~(1), $C(x_{i+1}) \subset L_i$.
Let $\r_{i,i+2}$ be the path in the tree $C(x_{i+1})$ from $x_i$ to $x_{i+2}$.
See Fig.~\ref{Star_CutLocus_Digons}.
\begin{figure}
\centering
 \includegraphics[width=0.85\textwidth]{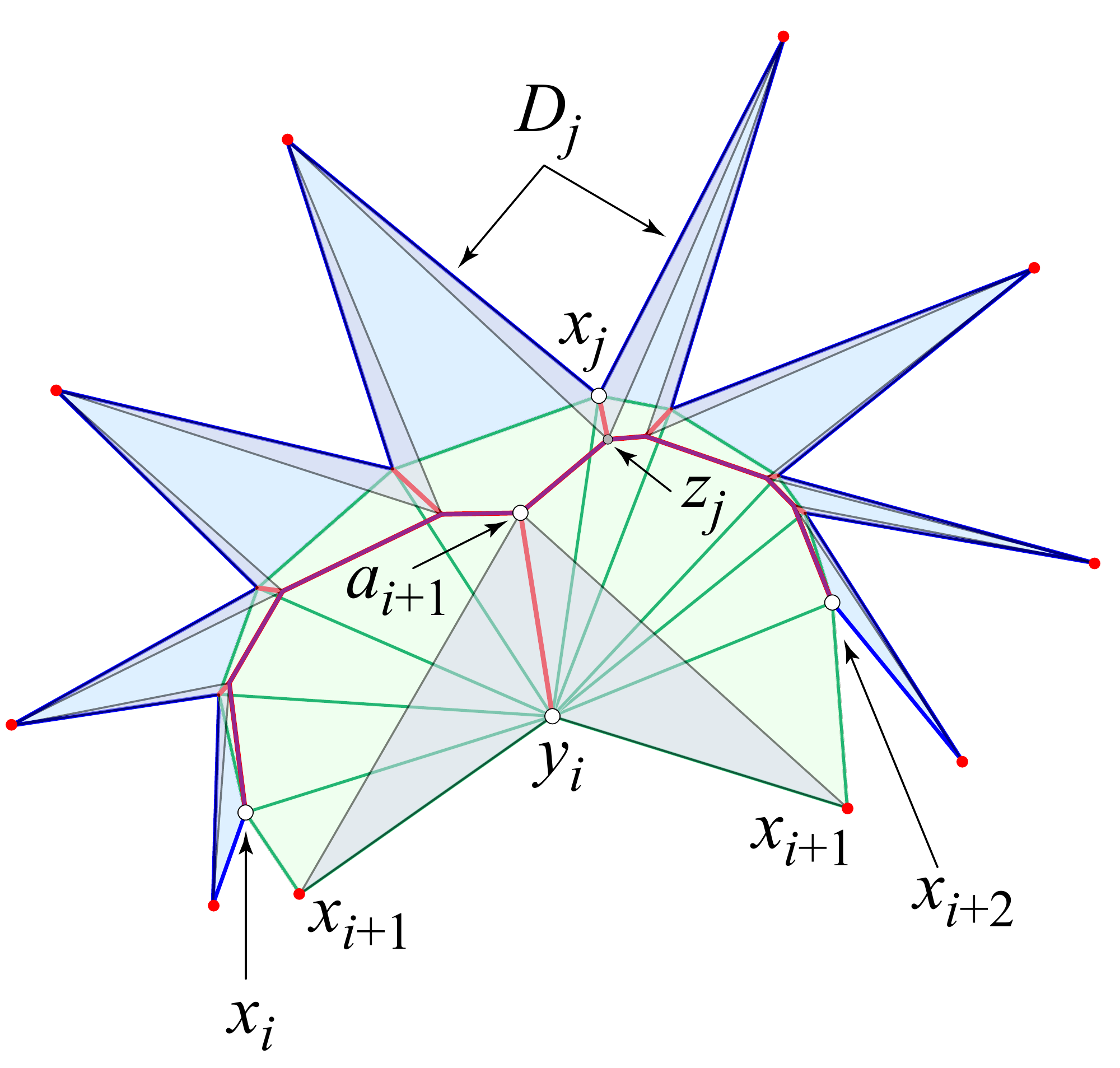}
\caption{
Star-unfolding of a pyramid with respect to $x_{i+1}$.
The triangles from $X_i$ are blue; those from $L_i$ are green.
Red points are images of $x_{i+1}$.
Digons are shaded. $\r_{i,i+2}$ is purple; remainder of $C(x_{i+1})$ is red.
}
\label{Star_CutLocus_Digons}
\end{figure}
Removal of $\r_{i,i+2}$ from $C(x_{i+1})$
disconnects $C(x_{i+1})$ into the edge $y_i a_{i+1}$, and a series
of subtrees $T_j$. Each $T_j$ shares a point $z_j$ with $C(x_{i+1})$.
Let $D_j$ be the digon from $x_{i+1}$ to $z_j$, and let $\d_j$ be the
angle of $D_j$ at $x_{i+1}$.
Finally, let $\D_j = \sum_j \d_j$.

Note that all the $\d_j$ angles are in $X_i$.
In contrast, the angle at $x_{i+1}$ in the digon $D_{i+1} = D(x_{i+1},a_{i+1})$ is in $L_i$, as illustrated in Fig.~\ref{Star_CutLocus_Digons}.
We defer justifying this claim to later.

Cut off all $D_j$, and also cut off $D_{i+1}$.
Suture the surface closed; call it $P^*=X^* \cup L^*$.
By Lemma~\ref{Truncation}, the cut locus $C(x_{i+1},P^*)$ is precisely the path $\r_{i,i+2}$.
Therefore, by Lemma~\ref{path}, $P^*$ is a doubly covered convex polygon, so all angles at $x_j$ are equal above in $L^*$ and below in $X^*$.
In particular, $\q^{X^*}_{i+1} = \q^{L^*}_{i+1}$.
Now, because $\D$ angle was removed from $\q^{X}_{i+1}$, $\q^{X^*}_{i+1} = \q^{X}_{i+1} - \D$.
Because $\a_{i+1}$ was removed from $\q^L_{i+1}$, $\q^{L^*}_{i+1} = \q^{L}_{i+1} - \a_{i+1}$.
Therefore, 
\begin{eqnarray}
\q^{L}_{i+1} - \a_{i+1} &=& \q^{X}_{i+1} - \D \; \le \; \q^{X}_{i+1} \\
\a_{i+1} &\ge& \q^{L}_{i+1} - \q^{X}_{i+1}
\end{eqnarray}
which is Claim~(2).

It remains to show that $D_{i+1}$ is in $L_i$ rather than in $X_i$.
Suppose to the contrary that all the angle removal was in $X_i$. Then $\q^{L}_{i+1} = \q^{X}_{i+1} - \D - \a_{i+1}$.
Then $\q^{L}_{i+1} < \q^{X}_{i+1}$, which is not possible for $P$ a pyramid.
This completes the proof of Claim~(2) and the lemma.
\end{proof}


\subsection{General case}

\begin{lm}
\label{VertexTruncation}
Let $Q$ be obtained from $P$ by truncating vertex $v$.
Then, if $v$ has degree-$k$, $Q$ may be obtained from $P$ by $k-1$ tailoring steps, each the excision of a digon surrounding one vertex.
\end{lm}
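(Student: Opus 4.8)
The plan is to reduce the general vertex truncation to the pyramid case already handled in Lemma~\ref{pyramid}, using the rigidity theorem (Theorem~\ref{rigid}) to conclude congruence at the end, exactly as in the proof of Lemma~\ref{vertex_small}. Let $v$ have degree $k$, with incident edges $e_1,\ldots,e_k$ meeting the slicing plane $\Pi$ at points $x_1,\ldots,x_k$. The pyramid over base $X = \Pi \cap P$ with apex $v$ is a sub-polyhedron of $P$; call it $\hat{P}$. The key observation is that all the surface geometry relevant to the tailoring — shortest paths from the $x_i$ to $v$ and to the successive replacement vertices $y_1,\ldots,y_{k-1}$, and the cut loci $C(x_i)$ restricted to the region being modified — lives inside the lateral surface $L = \hat{P}\setminus X$ of this pyramid. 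Claims~(1) and~(2) of Lemma~\ref{pyramid} were proved using only that the faces meeting the $x_i$ on the $L$-side have angles $\q^L_j$ with $\q^X_j < \q^L_j \le 2\pi - \q^X_j$, which is exactly the pyramid condition; and this condition on $L$ is inherited unchanged when $L$ is the lateral surface of a pyramid sitting inside a larger polyhedron $P$.

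The steps, in order, are as follows. First, fix the pyramid $\hat P = L \cup X$ inside $P$, and note that $P$ decomposes as $L$ glued to the rest of $P$ along $\partial X$; write $P = L \cup R$ where $R$ is the closure of $P \setminus L$, and $Q = X \cup R$. Second, carry out the same sequence of $k-1$ digon excisions as in Lemma~\ref{vertex_small}: the digon $D_1 = (x_1, y_1)$ surrounds $v$, each subsequent $D_{i+1} = (x_{i+1}, y_i)$ surrounds the previous replacement vertex, and each $D_i$ is chosen with angle at $x_i$ equal to $\o_Q(x_i) - \o_P(x_i)$. Third, verify that every one of these digons is contained in $L$: this is precisely Claims~(1) and~(2) of Lemma~\ref{pyramid}, whose proofs go through verbatim because they only used the pyramid-angle inequalities $\q^X_j < \q^L_j \le 2\pi - \q^X_j$ at the relevant $x_j$, which hold here since $L$ is a pyramid lateral surface and the faces of $R$ meeting $\partial X$ are untouched. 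In particular $C(x_{i+1}, P_i) \cap (\text{modified region}) \subset L_i$, so the ramification point $a_{i+1}$ used to size the digon lies in $L$, and the digon angle at $x_{i+1}$ is large enough. Fourth, after all $k-1$ excisions we have a convex polyhedron $P_{k-1}$ (each step legal by AGT as in the introduction) that is isometric to $Q$ outside a neighborhood of the last replacement vertex $y_{k-1}$, agrees with $Q$ on the positions and curvatures of $x_1,\ldots,x_{k-1}$, and — by Gauss–Bonnet, since all of $\o_P(v)$ has been redistributed — has $\o_{P_{k-1}}(y_{k-1}) = \o_Q(x_k)$. Fifth, apply Theorem~\ref{rigid} with the single exceptional vertex $y_{k-1}$ versus $x_k$ to conclude $P_{k-1}$ is congruent to $Q$, and in fact $y_{k-1} = x_k$.

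I expect the main obstacle to be the bookkeeping in step three: making precise the assertion that Claims~(1) and~(2) of Lemma~\ref{pyramid} "go through verbatim" in the presence of the extra surface $R$. One must check that when we star-unfold $P_i$ from $x_{i+1}$, the cut-locus argument of Claim~(1) — which used an extension of Cauchy's Arm Lemma applied to the convex chain of base vertices $x_j$ — is not disturbed by the portion of the unfolding coming from $R$, i.e. that a hypothetical cut point $z$ interior to $X_i$ still forces $|\g_1| \ge |\g_2|$ via exactly the same chain comparison. The point is that $\g_2$ can be chosen to run through $X_i$ (isometric to the planar base $X$) and $\g_1$ crosses $L_i$, and the chain of angles $\q^L_j$ at the intermediate $x_j$ still satisfies the $[-\t_j, \t_j]$ turn-angle bounds regardless of what $R$ looks like, because those bounds come from $X$ being convex and $L$ being a pyramid lateral surface. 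Similarly for Claim~(2), the cut-locus surgery argument (cutting off the digons $D_j$ and $D_{i+1}$ to reach a doubly-covered polygon via Lemma~\ref{path}) is local to $L_i \cup X_i$ and independent of $R$. The cleanest way to present this is probably to observe explicitly that the only structural facts about $P$ used in the proof of Lemma~\ref{pyramid} are the angle inequalities at the $x_i$ on the two sides of $\partial X$, and then state that these hold for any vertex truncation, so the pyramid case is indeed the worst case — after which the rest is a line-by-line repetition of Lemma~\ref{vertex_small}'s conclusion with Theorem~\ref{rigid}.
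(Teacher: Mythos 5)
There is a genuine gap in your step three, and it is exactly the point the paper singles out as the crux of the general case. In Lemma~\ref{pyramid} the polyhedron is $P = L \cup X$ with the flat base $X$ \emph{part of the surface}: Claim~(1) there argues that a hypothetical cut point $z$ interior to $X_i$ gives two geodesics $\g^z_1, \g^z_2$, of which at most one can lie in $X_i$ (since $X_i$ has no vertices), and then replaces one by a straight chord $\g_2 \subset X_i$ to apply the Cauchy-arm comparison. In the general vertex truncation, however, the surface of $P$ below $\Pi$ is your $R$ (the paper's $Q'$), \emph{not} the flat cross-section $X$: $X$ is merely an internal slice, geodesics on $P$ below $\Pi$ run through $R$, and $R$ is not flat and typically has vertices. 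So ``$\g_2$ can be chosen to run through $X_i$'' is false on $P$, and the claim that the surface below has no vertices (used to rule out both geodesics lying there) fails as well. Thus the Claim~(1) argument does not ``go through verbatim,'' and indeed the paper explicitly notes that one can \emph{no longer} conclude $C(x_{i+1},P_i) \subset L_i$ in the general case, precisely because the cut locus can extend into $Q'$.

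The paper's actual proof replaces Claim~(1) with two weaker statements established differently: (a) the shortest path from $x_{i+1}$ to $y_i$ stays in $L_i$, shown not by the arm lemma but by orthogonally projecting onto the plane of $X$ (projection is length-non-increasing, so a geodesic that dipped into $Q'$ would already be cut at the entry point); and (b)/(c) the relevant ramification point $a_{i+1}$ lies at least as far from $y_i$ as in the pyramid case, shown by comparing the three geodesic segments to $a_{i+1}$ -- the one that passes through $Q'$ is at least as long as in the case $Q'=X$, which forces the other two to be at least as long, pushing $a_{i+1}$ away from $y_i$. This is the precise sense in which ``the pyramid is the worst case,'' and it is not a consequence of the angle inequalities $\q^X_j < \q^L_j \le 2\pi - \q^X_j$ alone. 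Your final step (invoking Theorem~\ref{rigid} to identify $y_{k-1}$ with $x_k$) is fine and matches the paper's reliance on Lemma~\ref{vertex_small}, but the justification that every digon lives in $L_i$ needs the projection and length-comparison arguments above rather than a verbatim transfer of Lemma~\ref{pyramid}'s internal claims.
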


\begin{proof}
Here we argue that the general case is in some sense no different than
the special case of $P$ a pyramid just established in Lemma~\ref{pyramid}.
In fact, the exact same digon excisions suffice to tailor $P$ to $Q$.

First we establish additional notation.
Let $\Pi$ be the plane slicing off $v$ above $\Pi$, and let $X = \Pi \cap P$.
Let the ``bottom" part of $P$ be $Q'$, with the final polyhedron $Q = Q' \cup X$.
We continue to use $L$ to denote the portion of $P$ above $\Pi$, so $P = Q' \cup L$.
After removal of digons at $x_1,x_2,\ldots,x_i$, we have $P_i = Q'_i \cup L_i$.

Below it will be important to distinguish between the three-dimensional extrinsic shape of $Q'_i$ and its intrinsic structure determined by the gluings that satisfy Alexandrov's Gluing Theorem.
We will use $\bar{Q}'_i$ for the embedding in $\R^3$ and $Q'_i$ for the intrinsic surface, and we will similarly distinguish between $\bar{X}_i$ and $X_i$.
Note that we can no longer assume that $C(x_{i+1}, P_i) \subset L_i$, for the cut locus could extend into $Q'$
(whereas it could not extend into $X$ in Lemma~\ref{pyramid}).

\medskip

It suffices to show by induction that, on $P_i$, the following statements hold:
\\ (a) The shortest path $\g_{i+1}$ joining $x_{i+1}$ to $y_i$ is included in $L_i$.
\\ (b) The ramification point $a_{i+1}$ is still on $L_i$.
\\ (c) The $x_{i+1}$ angle $\a_{i+1}$ of the digon $D_{i+1} = D(x_{i+1},a_{i+1})$ is larger than or equal to $\o_Q(x_{i+1}) - \o_P(x_{i+1})$ (and so sufficient to reduce the curvature to $\o_Q(x_{i+1})$).

\medskip

To see (a), assume, on the contrary, that $\g_{i+1}$ intersects $Q'_i$.
Assume, for the simplicity of the exposition, that $\g_{i+1}$ enters $Q'_i$ only once, at $x_{i+1}$, and exits $Q'_i$ at $p \in \partial X$.
Let $\g'_{i+1}$ denote the part of $\g_{i+1}$ between $x_{i+1}$ and $p$.

We now check~(a) for $i=0$.
$Q=Q' \cup X$ and $X$ is planar, hence, because the orthogonal projection of any rectifiable curve onto a plane shortens or leaves its length the same,
$\g'_1$ is longer than or has the same length as its projection $\g''_1$ onto $X$.
So $p$ is a cut point of $x_1$ along $\g_1$, contradicting the extension of $\g_1$ as a geodesic segment beyond $p$.

By the induction assumption, all the digon excisions occur on $L_i$; $Q'_i$ is unchanged.
Nevertheless, as part of $P_i$, neither $\bar{Q}'_i$ nor $\bar{X}_i$ is (in general) congruent to the original $\bar{Q}$ and $\bar{X}$.
However, if we consider $Q'_i$ and $X_i$  separate from $P_i$, we can reshape them so that $\bar{Q}'_i = \bar{Q}$
and $\bar{X}_i = \bar{X}$, precisely because they have not changed.
Then $\bar{X}$ is planar and the projection argument used for $i=0$ works for all $i$.

\medskip

Next we check (b) and (c) for $i=0$.
Consider, as in Lemma~\ref{pyramid}, the digon $D_1=(x_1,y_1)$ with $y_1 \in C(x_1)$,
with again $a_1$ the first ramification point of $C(x_1)$ beyond $v$.
The direction at $v$ of the edge $v a_1$ is only determined by the geodesic segment from $x_1$ to $v$, 
and hence is not influenced at all by $Q'$, because, by $i=0$ in~(a), that segment lies in $L$.

The ramification point $a_1$ is joined to $x_1$ by three geodesic segments, two of 
them---say $\g_1$ and $\g_2$---included in $L$.
The third geodesic $\g_3$ starts from $x_1$ towards $Q'$ and finally enters $L$ to connect to $a_1$. 
See Fig.~\ref{GeodesicsAbstract}.
Because these three geodesics have the same length, the longer $\g_3$ is, the longer are $\g_1$ and $\g_2$, and therefore more distant is $a_1$ to $v$.
So $a_1$ is closest to $v$, and the segment $v a_1$ shortest, when $Q' = X$ and $P$ is a pyramid.
It is when $v a_1$ is shortest that there is the least ``room" for $y_1$ on $v a_1$ to achieve the needed 
digon angle at $x_1$, for that angle is largest when $y_1$ approaches $a_1$.
Therefore, the case when $Q' = X$ and $P$ is a pyramid is the worst case, already settled in  Lemma~\ref{pyramid}.

\begin{figure}
\centering
 \includegraphics[width=0.6\textwidth]{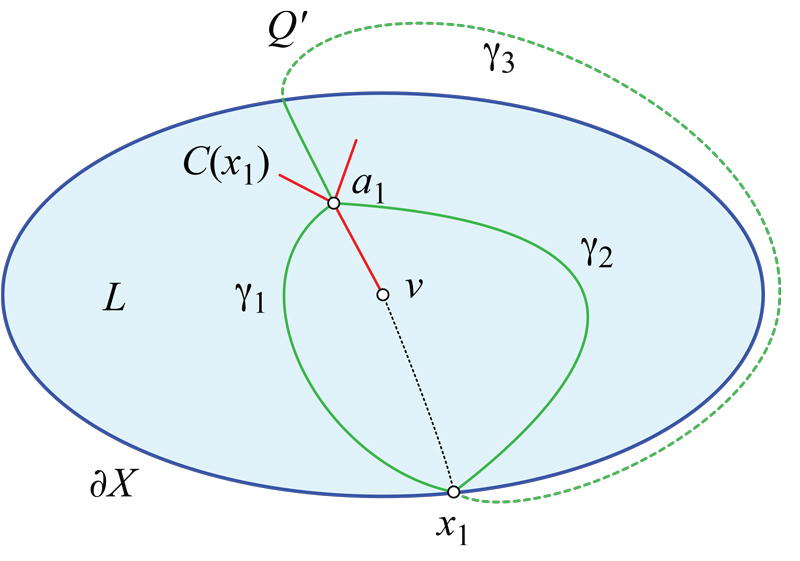}
\caption{Three geodesics to ramification point $a_1$ on $C(x_1)$. Dashed $\g_3$ partially in $Q'$.
The same situation holds under the changes:
$x_1 \to x_{i+1}$, $a_1 \to a_{i+1}$, $v \to y_i$, $Q' \to Q'_i$, $L \to L_i$.
}
\label{GeodesicsAbstract}
\end{figure}

Now we treat the general case for (b) and (c).
Again by the induction assumption, all changes to $P_i$ were made on its ``upper part'' $L_i$.

Because we ultimately need to reduce $L$ to $X$, the angle $\o_Q(x_{i+1}) - \o_P(x_{i+1})$ necessary to be excised at $x_{i+1}$, does not depend on $Q'_i$, only on $L_i$.
Thus the argument used for $i=0$ carries through.
The situation depicted in Fig.~\ref{GeodesicsAbstract} remains the same, with $x_1$ replaced by $x_{i+1}$, $v$ replaced by $y_i$, and $a_1$ replaced by $a_{i+1}$.
The ramification point $a_{i+1}$ is closest to $y_i$, and the segment $y_i a_{i+1}$ shortest, when $Q' = X$ and $P$ is a pyramid.
It is when $y_i a_{i+1}$ is shortest that there is the least ``room" for $y_{i+1}$ on $y_i a_{i+1}$ to achieve the needed angle excision at $x_{i+1}$, 
for that angle is largest when $y_{i+1}$ approaches $a_{i+1}$.
Therefore, the case when $Q' = X$ and $P$ is a pyramid is the worst case, already settled in  Lemma~\ref{pyramid}.
\end{proof}

\noindent
Note that, in the end, the digon removals in Lemma~\ref{vertex_small}, and then in  Lemma~\ref{pyramid},
also work in the general case, Lemma~\ref{VertexTruncation}.

\bigskip



\subsection{Remarks about tailoring}
\label{remarks_tailoring}

We now detail an example following Lemma~\ref{pyramid} to tailor a pyramid $P$ to its base $X$.
We continue to employ the notation used in the lemmas above.
The example is shown in Fig.~\ref{Hex3D}. $X$ is a regular hexagon, and $L$ consists of $k=6$ congruent, $70^\circ{-}70^\circ{-}40^\circ$ isosceles triangles. 
The curvature at the apex $v$ is $360^\circ - 6 \cdot 40 = 120^\circ$.
The angle at each $x_i$ in $X$ is $120^\circ$ whereas the angle in $L$ is $140^\circ$.
So each digon excision must remove $20^\circ$ from $x_i$.
As in the lemmas, we excise the digons in circular order around $\partial X$.

\begin{figure}
\centering
 \includegraphics[width=0.5\textwidth]{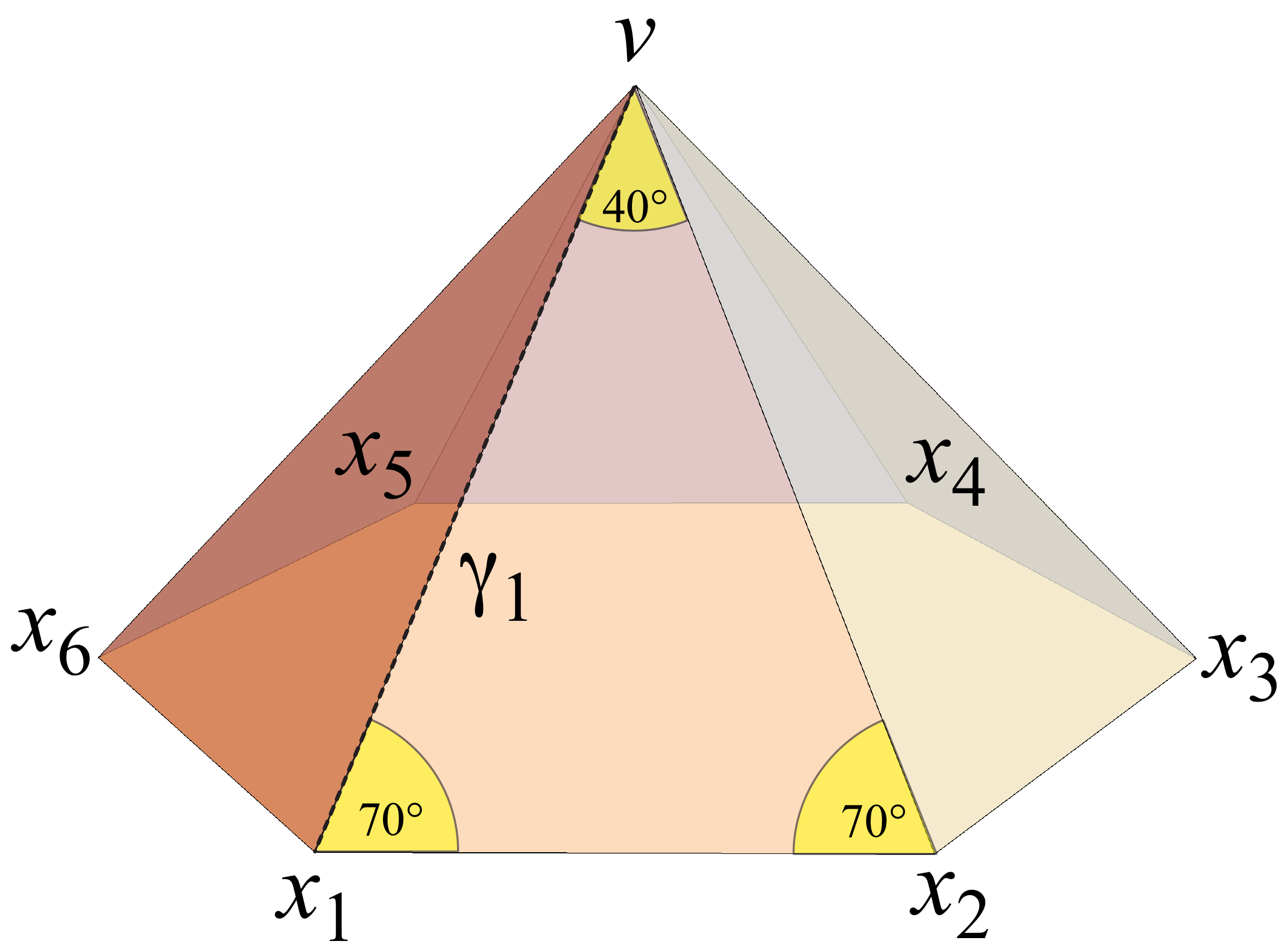}
\caption{Pyramid with regular hexagon base $X$;
all lateral faces congruent triangles.
}
\label{Hex3D}
\end{figure}

We display the progress of the excisions on the layout of $L$ in Fig.~\ref{DigonsHexFlatX}(a).
Let $v=y_0$ for ease of notation.
$D_1=(x_1,y_1)$ includes the geodesic $\g_1$ from $x_1$ to $y_0$,
and locates $y_1$ on the cut locus segment as described in the lemmas.
The digon boundary geodesics each remove $10^\circ$ from the
left and right neighborhood of $x_1$, and meet at $y_1$ at an angle of $100^\circ$,
which is then the curvature at the new vertex: $\o(y_1)=100^\circ$.
Notice that the digon angles $20^\circ + 100^\circ$ match the curvature
$\o(v)=120^\circ$ removed, as they must to satisfy Gauss-Bonnet.
\begin{figure}
\centering
 \includegraphics[width=1.0\textwidth]{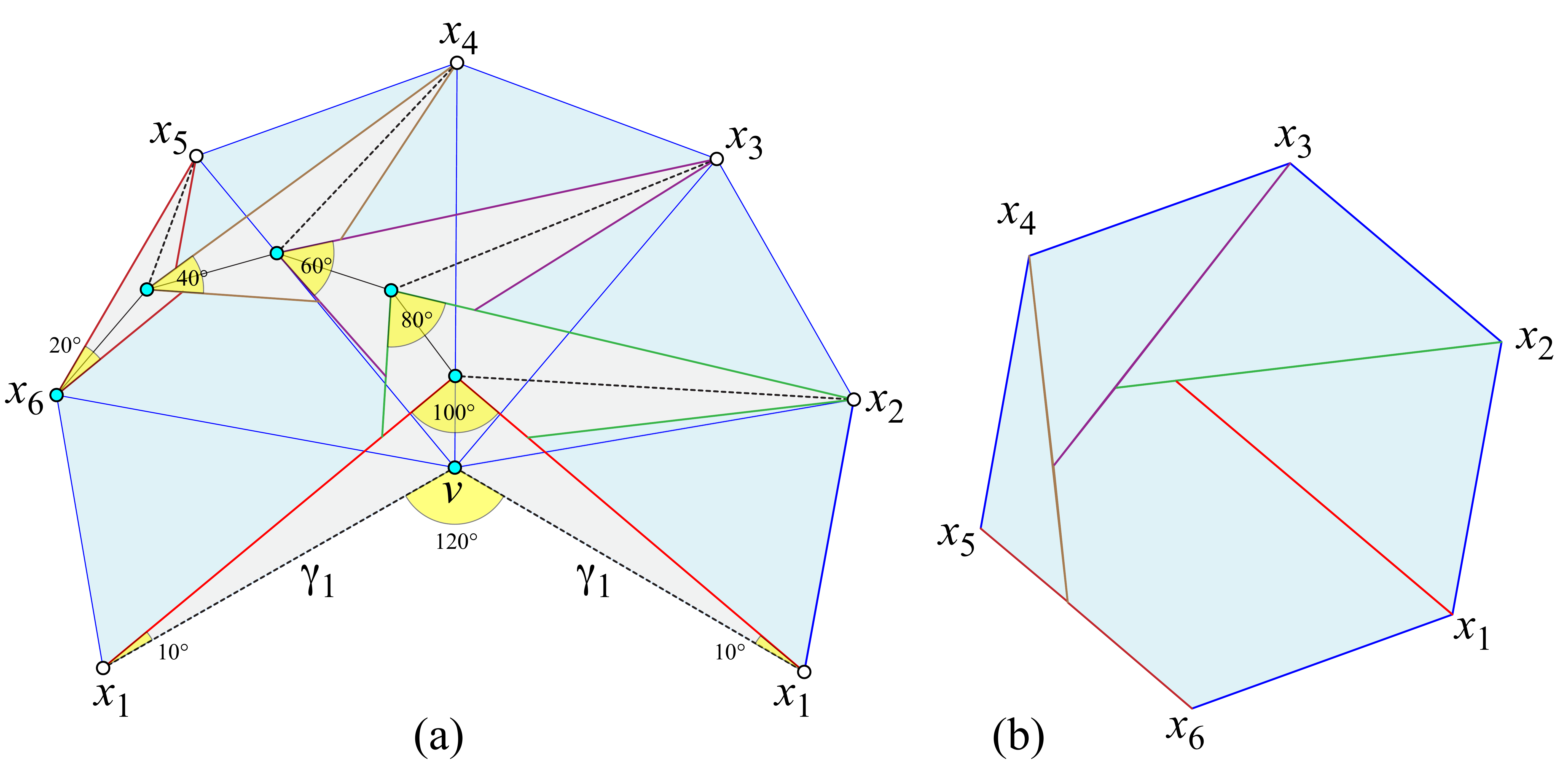}
\caption{(a)~Cone $L$ flattened; $\o(v)=120^\circ$.
Digons $D_i=(x_i,y_{i-1})$ shaded. Dashed lines are geodesics $\g_i$ from 
$x_i$ to the vertex $y_{i-1}$ (with $y_0=v$).
(b)~After excising all digons $D_1,\ldots,D_5$, $L_5$ is isometric to $X$.
Seals $ \bar{\s}_i$ are marked.
}
\label{DigonsHexFlatX}
\end{figure}

One should imagine that $D_1$ is sutured closed in Fig.~\ref{DigonsHexFlatX}(a), producing $L_1$, before constructing $D_2(x_2,y_2)$. 
Let $\s_i$ be the geodesic on $L_i$ that results from sealing $D_i$ closed; $\s_i$ is like a ``scar" from the excision.
Notice that one of the geodesics bounding $D_2$ crosses $\s_1$.

This pattern continues as all $k{-}1=5$ digons are removed, each time replacing vertex $y_{i-1}$ with $y_i$, flattening the curvature $\o(y_i)=\o(y_{i-1})-20^\circ$.
Finally, after $D_5(x_5,y_5)$ is removed, $y_5$ is coincident with $x_6$.
No further digon removal is needed, because $D_5$ removed $20^\circ$ from $x_6$.
So now each angle in $L_k=L_5$ at all $k{=}6$ vertices is $120^\circ$, and $L_5$ is isometric to a flat regular hexagon, i.e., to $X$.

This final hexagon is shown in Fig.~\ref{DigonsHexFlatX}(b).
The images of the seals---call them $ \bar{\s}_i$---are in general clipped versions of $\s_i$ on $L_i$, clipped by subsequent digon removals.
The particular circular order of digon removal followed in this example and the lemmas results in a spiral pattern formed by $\bar{\s}_i$.
Other excision orderings, which ultimately would result in the same flat $L_{k-1}$
(effectively proved in Lemmas~\ref{vertex_small}--\ref{VertexTruncation}) would create different seal patterns.

Let $\S=\S^L_X$ denote the {\it seal graph} obtained as the union of all $\bar{\s}_i$.
It could be of some interest to characterize the patterns achievable as seal graphs, for example, is every seal graph a tree?
We only show here, with the next result, what a  degree-$4$ vertex in $\S$ looks like.
The inverse process, of reconstructing $P$ from $\S$ and $X$, will be treated more generally in 
Section~\ref{enlarging}.

\begin{lm}
If a seal graph $\S=\S^L_X$ has a degree-$4$ vertex $z$, then there exist $i<j<k$ such that $\bar{\s}_i$ and $\bar{\s}_j$ end at $z$, and $\bar{\s}_k$ passes beyond $z$.
Moreover, the digon excision order is $D_k$ just after $D_j$ just after $D_i$.
\end{lm}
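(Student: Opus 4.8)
The plan is to track how a single point $z$ on $L$ can come to have $L$-side tree-degree $4$ in the seal graph $\Sigma = \Sigma^L_X$, and to read off the excision order from the structure of the seals near $z$. First I would recall that each seal $\bar\sigma_i$ is the image (after possible clipping by later digons) of the geodesic scar $\sigma_i$ produced when the digon $D_i=(x_i,y_{i-1})$ is sutured closed. When $D_i$ is sealed, the two digon-boundary geodesics are identified along their length and $\sigma_i$ is an embedded geodesic arc on $L_i$ running from $x_i$ out to the new vertex $y_i$; in particular $\sigma_i$ has exactly two ends at the moment of creation, one at $x_i$ and one at $y_i$. A degree-$4$ vertex of $\Sigma$ therefore cannot come from the interior of any single seal arc; it must be a point where several scar-arcs meet, and the only way to create a vertex of the final seal graph is either to terminate a seal at it (contributing one edge-end) or to cut a later digon across an already-present seal (which can split that seal, contributing two edge-ends at the crossing point, and at the same time terminating the new seal there or passing through).

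Next I would enumerate the local possibilities. Since each seal terminates at the corresponding $y_i$ (a vertex of $L_i$ which is then destroyed by the very next excision $D_{i+1}$, whose geodesic $\g_{i+1}$ runs to $y_i$), the endpoint $y_i$ of $\bar\sigma_i$ lies \emph{on} the next digon $D_{i+1}$, hence can become a vertex of $\Sigma$ only if a still-later seal also reaches it. Counting edge-ends: to get tree-degree $4$ at $z$ we need exactly four seal-arc ends at $z$. Terminating a seal at $z$ gives one end; a seal passing through $z$ (because a later digon was cut across it, splitting it there) gives two ends. The arithmetic $4 = 1+1+2$ is the only decomposition consistent with the geometry — three seals cannot all merely terminate at the same interior point because $y_i\ne y_j$ for $i\ne j$ (the vertices $y_i$ are distinct as long as curvature is strictly being reduced), and a single seal passing through plus two terminating is exactly $2+1+1$. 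So there are indices, which I will name $i<j<k$, with $\bar\sigma_i$ and $\bar\sigma_j$ ending at $z$ and $\bar\sigma_k$ passing beyond $z$, i.e. $\sigma_k$ is the seal that gets split at $z$ by a subsequent cut.

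Then I would pin down the order. Because $\bar\sigma_i$ terminates at $z$, the point $z$ equals $y_i$, so $z$ is a vertex of $L_i$ lying on the digon $D_{i+1}$; for $z$ to survive as a vertex into a later stage, the excision $D_{i+1}$ must be precisely the one that replaces $y_i$ — but then $j = i+1$, since $z=y_i$ must coincide with an endpoint of the seal $\sigma_j$, and the only seal whose terminal vertex can be made to land on $z=y_i$ at the next step is $\sigma_{i+1}$. Likewise, the seal $\sigma_k$ that is split at $z$ is split by the digon $D_{?}$ that is excised right after $D_j$: the crossing that cuts $\sigma_k$ and simultaneously anchors the just-created seal at $z$ forces that digon to be $D_k$ and to be excised immediately after $D_j$. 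Hence the excision order is $D_k$ just after $D_j$ just after $D_i$, as claimed.

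The main obstacle I anticipate is the careful bookkeeping of \emph{edge-ends versus vertices} in the clipped seal graph: a seal $\bar\sigma_m$ can be clipped multiple times by later digons, so one must argue that near the specific point $z$ only the three seals $\bar\sigma_i,\bar\sigma_j,\bar\sigma_k$ contribute, that no seal contributes more than two local ends at $z$, and that the crossing of a digon geodesic with an existing seal genuinely produces a graph vertex (rather than, say, an overlap along a sub-arc). Resolving this cleanly requires knowing that digon-boundary geodesics meet existing seals transversally in isolated points, which follows from the fact (used throughout the paper) that two distinct geodesic segments on a convex polyhedron share only isolated points unless they coincide; with that in hand, the degree-$4$ analysis and the forced order $i \to j \to k$ with $j = i+1$ and $k$ immediately after $j$ go through.
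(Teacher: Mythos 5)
Your ``edge-end accounting'' framework gets to the same decomposition $4 = 1 + 1 + 2$ that the paper uses, but there are two substantive problems.

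First, and most serious: you never rule out the decomposition $4 = 2 + 2$, i.e.\ the configuration in which only two seals $\bar\s_i, \bar\s_j$ meet $z$ and both \emph{pass through} $z$. This is not vacuous: it would arise if the two crossing points of $\s_i$ with the two boundary geodesics of $D_j$ happened to identify to the same point $z$ when $D_j$ is sutured, so that both $\s_i$ and $\s_j$ run straight through $z$. The paper devotes a separate argument to this case: it argues that for $\s_i$ to remain a geodesic across the seal of $D_j$, $\s_i$ would have to be orthogonal to both digon-boundary geodesics, producing two positive-curvature geodesic triangles inside $D_j$, each of which would then have to contain a vertex — contradicting that $D_j$ encloses exactly one vertex. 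Your proposal simply lists $1+1+2$ and $1+1+1+1$ as the only arithmetically natural options, but $2+2$ is equally natural and must be eliminated.

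Second, the step ``Because $\bar\sigma_i$ terminates at $z$, the point $z$ equals $y_i$'' is false, and it is the pivot of your ordering argument. The vertex $y_i$ lies in the \emph{interior} of the next digon $D_{i+1}$ and is therefore excised when $D_{i+1}$ is removed; consequently, in the final seal graph $\S$ the seal $\bar\s_i$ is clipped and terminates not at $y_i$ but at the point where $\s_i$ crosses the boundary of $D_{i+1}$, i.e.\ at a point on $\bar\s_{i+1}$. (This is exactly what produces the spiral of clipped seals in the paper's hexagon example.) The conclusion $j=i+1$ is still the right one — the first digon that clips $\s_i$ is $D_{i+1}$, and so $\bar\s_i$'s terminal point lies on $\bar\s_{i+1}$, which is essentially the paper's ``only $D_j$ could surround $y_i$'' argument — but your route to it, via identifying $z$ with $y_i$, does not hold. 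Patching this means replacing the claim ``$z=y_i$'' with ``$z$ lies on $\bar\s_{i+1}$ because $D_{i+1}$ is the (first) digon that clips $\s_i$,'' and similarly for $j \to k$.
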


\begin{proof}
Consider a common point $z$ of $\bar{\s}_i$ and $\bar{\s}_j$, with $i < j$.
We may assume $j > i+1$ , since otherwise $\deg z = 3$ in $\S$.

Assume first that no other $\bar{\s}_k$ passes through $z$.
Assume that $\deg z = 4$ in $\S$.
This implies that the digon $D_j$ crosses $\s_i$ and, since $\s_i$ remains a geodesic after the excision of $D_j$, 
$\s_i$ must be orthogonal to both geodesics bounding $D_j$.
Therefore, $\s_i$ creates with those two geodesics two geodesic triangles, both of positive curvature.
So each such triangle contains a vertex inside, contradicting that $D_j$ itself contains only one vertex.

Assume now that $\bar{\s}_i$ ends at $z$, as does $\bar{\s}_k$ for some $k \not\in \{i,j\}$.
Notice that we cannot have four $\bar{\s}$s ending at $z$, because for the last one 
arriving---say $\bar{\s}_k$---$\s_k$ would create a vertex at $z$, 
which will be excised by the digon $D_{k+1}$, breaking that degree-$4$ configuration at $z$.

So we may assume that $z$ belongs only to  $\bar{\s}_i$, $\bar{\s}_j$, and $\bar{\s}_k$.
We may further assume, without loss of generality, that $i<j<k$.

Notice that both $\bar{\s}_i$ and $\bar{\s}_j$ end at $z$ and $\bar{\s}_k$ does not, 
because otherwise $\s_k$ would create a vertex on $\bar{\s}_i$ (or on $\bar{\s}_j$) which would be excised by the digon $D_{k+1}$, 
breaking that degree-$4$ configuration at $z$.

The digon excision order follows: only $D_j$ could surround $y_i$, so its excision was just after $D_i$, and similarly for $D_j$.
\end{proof}

\medskip

For a particular digon-removal ordering, consider  the inverse image of $\S^L_X$ on $L$, and denote it by $\G=\G_X^L$. 
$\G$ is a simple geodesic polygon surrounding $v$.
In Fig.~\ref{DigonsHexFlatX}(a), $\G$ is the boundary of the gray region, 
effectively the union of the digons
(but recall that the digons $D_i$ live on different surfaces $P_{i-1}$; hence ``effectively").
Clearly, excising the surface bounded by $\G$ from $L$ all at once achieves the same effect as excising the digons $D_i$ one-by-one.
We do not know if it is possible or not to determine $\G$ directly on $L$; i.e., otherwise than via digons.

The region of $L$ bound by a geodesic polygon $\G$ is a particular instances of what we call a \emph{crest}:
a subset of $L$ enclosing $v$ whose removal and suitable suturing via AGT will reduce $L$ to $X$. 
Note that we allow the boundary of a crest to include portions of $\partial L$, e.g., $\G$ in Fig.~\ref{DigonsHexFlatX}(a) includes the $x_i$ as well as the edge $x_5 x_6$.
In Section~\ref{secCrests} we will show that it is possible to construct crests directly on $L$ without deriving them from digon removals.


\section{Tailoring is finer than sculpting}
\label{finer}

In this section we reach one of our main results, Theorem~\ref{main}, which says, roughly, 
that any polyhedron $Q$ that can be obtained by sculpting $P$ can be obtained by tailoring $P$.
Moreover, Lemma~\ref{NotReachable} shows that polyhedra can be obtained by tailoring that cannot be obtained by sculpting. So, in a sense, tailoring is finer than sculpting.

\begin{lm}
\label{NotReachable}
There are shapes $P$ and sequences of tailorings of $P$ that result in polyhedra not achievable by sculpting.
\end{lm}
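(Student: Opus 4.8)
The plan is to exhibit a concrete $P$ together with a single tailoring step whose outcome $Q$ provably cannot arise from any sculpting of $P$. The guiding principle is that sculpting (intersecting with half-spaces bounded by planes through faces of the target) can only \emph{decrease} every support-function value of $P$, hence can only decrease volume and diameter and, more to the point, cannot move a vertex of $P$ ``outward'' in any direction. Tailoring, by contrast, is area-decreasing but not volume-decreasing, and the AGT-guaranteed polyhedron $Q$ need not sit inside $P$ at all. So the obstruction I want to make precise is: a sculpted polyhedron is always a subset of (a congruent copy of) $P$ placed appropriately, whereas a tailored polyhedron need not embed in $P$.

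First I would fix the cleanest possible witness. The doubly covered kite $K$ of Example~\ref{tetra-deg}, tailored as in Example~\ref{deg-vol} (excising a digon encircling $o$ between corresponding points $x,y\in oc$ on the two sheets), yields the genuinely three-dimensional pentahedron of Fig.~\ref{figPentahedron}. Better still, I would start from the regular tetrahedron $R$ itself: a single digon-tailoring of $R$ (Example~\ref{tetra-deg}) produces the doubly covered kite $K$, and a second tailoring produces the non-degenerate pentahedron $\Pi_5$. I would take $P=R$ and $Q=\Pi_5$ and argue that $\Pi_5$ is not sculptable from $R$. The key numeric levers are: (i) sculpting never increases volume, while here $\mathrm{vol}(K)=0$ and $\mathrm{vol}(\Pi_5)>0$, so $\Pi_5$ is not sculptable from any \emph{flat} intermediate; and (ii) for $P=R$ directly, I would compare an invariant that sculpting must monotonically decrease. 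The most robust such invariant is the intrinsic diameter of the surface, or—cleaner for a rigorous argument—the set of \emph{extrinsic} diameters: sculpting $R$ to $Q$ forces $Q\subseteq R$ up to rigid motion, hence $\mathrm{diam}(Q)\le\mathrm{diam}(R)$ and every face-normal support value of $Q$ is at most the corresponding support value of $R$.

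The heart of the argument is therefore the following lemma, which I would state and prove: \emph{if $Q$ is obtained from $P$ by a sequence of plane slices, then there is a rigid motion placing $Q\subseteq P$.} This is essentially immediate from the definition—each slice replaces the current body by its intersection with a half-space—but stating it isolates exactly what must fail for tailoring. Then I need a single scalar $f$ with $f(Q)>\max\{f(\text{congruent copies of }Q'\subseteq P)\}$. Two candidates: the surface area is wrong (tailoring decreases it), but the \emph{number of vertices} combined with a curvature-distribution argument works, or most simply the volume when the tailoring passes through a flat stage. I would run the volume argument: $R\to K$ (flat, volume $0$) $\to\Pi_5$ (volume $>0$, computed once from Fig.~\ref{figPentahedron}). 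Since any sculpting of $R$ that produced $\Pi_5$ would give $\Pi_5\subseteq R$, and $\Pi_5$ has positive volume, this does not yet contradict anything; so the volume argument needs the intermediate flat stage, meaning I should instead take $P=K$ (the doubly covered kite) as the starting shape: $K$ has volume $0$, any sculpt of $K$ has volume $0$, but the tailoring of Example~\ref{deg-vol} produces $\Pi_5$ with positive volume. Hence $\Pi_5$ is tailorable from $K$ but not sculptable from $K$.

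The main obstacle I anticipate is purely expository: making the ``sculpting of a flat polyhedron stays flat'' claim airtight, since $K$ is a degenerate polyhedron (a doubly covered polygon) and one must be slightly careful that ``slicing $K$ with planes'' is well-defined and volume-preserving-at-zero in that degenerate setting. I would handle this by noting that the only half-spaces whose bounding plane contains a face of $K$ either contain all of $K$ (doing nothing) or cut $K$ along a chord, and in every case the result is again a planar region doubly covered, hence volume $0$; this is a short case check. A secondary obstacle is verifying $\mathrm{vol}(\Pi_5)>0$ rigorously rather than by appeal to the figure; for this I would cite Example~\ref{deg-vol} and the explicit five-vertex reconstruction of Fig.~\ref{figPentahedron}, where non-degeneracy is manifest from the vertex coordinates. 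With those two points pinned down, the proof is: take $P=K$; tailor as in Example~\ref{deg-vol} to get $\Pi_5$; observe any sculpting of $K$ yields a flat (volume-zero) polyhedron; since $\mathrm{vol}(\Pi_5)>0$, $\Pi_5$ is not achievable by sculpting $K$. \qedhere
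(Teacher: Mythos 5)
Your proof is correct, but it picks a genuinely different obstruction than the paper does. You set $P=K$ (the degenerate doubly-covered kite) and tailor it by Example~\ref{deg-vol} to the non-degenerate pentahedron $\Pi_5$; the obstruction is that sculpting a flat body can only produce flat bodies, so every sculpt of $K$ has volume $0$, whereas $\mathrm{vol}(\Pi_5)>0$. This works, and it is arguably more elementary than what the paper does, since it only needs ``slicing preserves containment, hence never increases volume'' plus the manifest non-degeneracy of $\Pi_5$. The paper instead keeps $P=R$ (the regular tetrahedron, non-degenerate) and $Q=K$, and the obstruction is a diameter count: the extrinsic diameter of $K$ equals the \emph{intrinsic} diameter $2/\sqrt3$ of $R$, which strictly exceeds the extrinsic diameter $1$ of $R$, so no isometric copy of $K$ fits inside $R$ and $K$ cannot be sculpted from $R$; the paper then perturbs $R$ to $R'$ and $K$ to $F$ to get a fully non-degenerate example via continuity of the diameters. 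What the paper's route buys that yours does not is precisely this non-degeneracy: it shows the phenomenon is not an artifact of starting from a flat $P$, which your volume argument inherently cannot do since it relies on $\mathrm{vol}(P)=0$. What your route buys is a simpler invariant (volume rather than a comparison of intrinsic vs.\ extrinsic diameters) and a single concrete tailoring step; you should just be explicit, as you note, that slicing a doubly-covered polygon with any half-space still yields a doubly-covered polygon, since that is the one case-check your argument needs.
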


\begin{proof}
We first tailor a regular tetrahedron $R$ as in Example~\ref{tetra-deg}, resulting in the kite $K$ in Fig.~\ref{fig2}(b).
To show that $K$ cannot fit inside $R$, assume $R$ has edge-length $1$.
Then its extrinsic diameter is $1$ and its intrinsic diameter is $2/\sqrt{3}$ (see, e.g. Theorem 3.1 in~\cite{ro1}).
Moreover, the extrinsic diameter of $K$ is precisely the intrinsic diameter of $R$, and so it cannot fit inside $R$.

We construct now a non-degenerate example, a modification of the previous one.
Consider a non-degenerate pentahedron $F$ close enough to $K=o a c_d b$ in Fig.~\ref{fig2}(b).
For example, it could have two vertices close to the vertex $a$ of $K$.
Insert into $F$ the removed digon from $R$; this is not affected by the new vertex, because it does not interfere with the geodesic segment from $c_d$ to $o$. 
We arrive at some surface $R'$ close enough to the original tetrahedron $R$.
Therefore, the intrinsic and extrinsic diameters of $R'$ and $F$ are close enough to those of $K$ and $R$, respectively,
and the above inequality between the extrinsic diameters of $R'$ and $F$ still holds, because of ``close enough."
\end{proof}

\begin{thm}
\label{general_slice}
\label{thmSliceGDomes}
\label{main}
\label{thmMainTailoring}
Let $P$ be a convex polyhedron, and $Q$ the result obtained by repeated slicing of $P$ with planes. Then $Q$ can also be obtained from $P$ by tailoring.

Consequently, for any given convex polyhedra $P$ and $Q$, one can tailor $P$ ``with sculpting'' to obtain any homothetic copy of $Q$ inside $P$.
\end{thm}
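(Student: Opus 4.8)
The plan is to reduce the theorem to its two predecessors: Theorem~\ref{pyra-dome}, which decomposes a g-dome into a sequence of pyramids cut off by planes through base edges, and Lemma~\ref{VertexTruncation}, which says that cutting off a single pyramid (a vertex truncation) can be realized by $k-1$ digon-tailoring steps. So the skeleton is: a single plane slice of $P$ can be factored through g-domes, each g-dome through pyramids, and each pyramid through tailorings; then iterate over the finite sequence of slices that sculpt $P$ to $Q$.

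First I would make precise the assertion that a single plane slice of a convex polyhedron $P$ is realizable by tailoring. Let $\Pi$ be a plane meeting the interior of $P$, and let $G$ be the (smaller) cap $P \cap \Pi^+$ that is sliced away, with base face $X = P \cap \Pi$. The key observation is that $G$ need not be a g-dome as it sits, but it becomes one after we first truncate those vertices of $G$ whose stars do not already touch $X$: equivalently, intersect $G$ with a family of half-spaces through edges of $X$, or simply observe that $G$ can be written as a finite union of g-domes stacked on $X$ (slice $G$ by planes parallel to $X$ at the heights of its vertices; each slab between consecutive heights is a g-dome over its lower face, and the lower faces are in turn produced by truncating vertices). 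I would phrase this as: repeatedly peel off the g-dome lying immediately above the current top face, using Theorem~\ref{pyra-dome} to chop that g-dome into pyramids with edges on the current base, and Lemma~\ref{VertexTruncation} to realize each such pyramid removal by $k-1$ digon excisions. Because Lemma~\ref{VertexTruncation} handles a vertex truncation on an \emph{arbitrary} convex polyhedron (not just a pyramid), no extra hypothesis is needed at each step; the polyhedron before each pyramid removal is just the convex polyhedron produced by the previous batch of tailorings (guaranteed convex by AGT). Finitely many pyramids, hence finitely many digons, realize the single slice.

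Next I would handle the full sculpting sequence. By definition, if $Q \subset P$ then $Q$ is obtained from $P$ by finitely many plane slices, each plane carrying a face of $Q$; apply the single-slice result to each slice in turn. The output after all slices is $Q$ itself (not merely a polyhedron isometric to $Q$ — AGT's uniqueness is what lets us assert the intermediate surfaces really are the claimed convex polyhedra, and the final one really is $Q$). For the ``Consequently'' clause: given arbitrary $P$ and $Q$, shrink $Q$ by a homothety until the scaled copy $Q'$ fits inside $P$ (possible since $Q'$ can be made to have arbitrarily small diameter), observe $Q'$ is sculpted from $P$ by slicing along the planes of its faces, and invoke the first part. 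This yields a homothet of $Q$ tailored from $P$.

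The main obstacle is the first paragraph's claim — that an arbitrary slice cap $G$ can be organized as a finite stack of g-domes, each obtained from the one below by vertex truncations. The subtlety is bookkeeping: after slicing $G$ at the height of one of its vertices, the portion below is bounded above by a new face, and one must check that the slab above that face is genuinely a g-dome (every one of its other faces shares an edge or vertex with the new base), and that the new base is a legitimate convex polygon with all of its vertices being vertices of the evolving polyhedron so that Theorem~\ref{pyra-dome} and Lemma~\ref{VertexTruncation} apply verbatim. A clean way to sidestep most of this is to slice $G$ by \emph{all} planes through edges of $X$ that cut into $G$, reducing $G$ to a g-dome over $X$ directly, at the cost of introducing auxiliary vertices that are later removed; since we only need \emph{some} sequence of tailorings, and each auxiliary slice is itself handled recursively, the argument terminates. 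I would also remark that the total count is polynomial — a cap of $O(n)$ vertices reduces to $O(n^2)$ pyramids of $O(1)$ size plus $O(n)$ pyramids of $O(n)$ size, as flagged at the end of Section~\ref{domes} — which is what feeds the $O(n^4)$ algorithmic bound claimed in the introduction.
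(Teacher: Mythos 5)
The overall skeleton — single slice $\to$ g-domes $\to$ pyramids (Theorem~\ref{pyra-dome}) $\to$ digons (Lemma~\ref{VertexTruncation}), then iterate over slices and handle homothety by shrinking — matches the paper, and your framing of the ``Consequently'' clause is correct.

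However, your key step, decomposing the sliced-off cap $G$ into g-domes, is wrong as stated. You propose slicing $G$ by planes \emph{parallel} to the base $X$ at the heights of its vertices and assert that each resulting slab is a g-dome over its lower face. This fails already for a right square prism (a box) with base $X$: all eight vertices lie at two heights, so your decomposition produces a single slab equal to the whole box, but the box is not a g-dome over $X$ because its top face shares neither an edge nor a vertex with $X$. More generally, nothing forces the upper slice face of a parallel slab to touch the lower one, and that contact is precisely what the definition of a g-dome requires of every non-base face. Your fallback suggestion — ``intersect $G$ with a family of half-spaces through edges of $X$'' — is never made precise; taken literally it is circular (one would need to already know which planes make the residue a g-dome), and the recursive ``handle the auxiliary slice recursively'' step has no clearly decreasing measure. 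The paper avoids this by a different device: it fixes an edge $e$ of the base face $F$ and rotates a single plane about $e$, starting from the plane of the adjacent face $F'$ of $Q$. As the plane sweeps through the vertices $v_1, v_2, \ldots$ of the cap in angular order, the region cut off is always tracked; $\Pi_1$ is chosen as the last rotation at which the cut-off region (with base taken in $\Pi_1$) is still a g-dome, and one repeats from there with $\Pi_2, \Pi_3, \ldots$. Because every cut plane contains $e$, the base of each successive g-dome is genuinely met by all its lateral faces, which is exactly the property your parallel-plane slabs lack. You would need to replace the parallel decomposition with this rotating-plane scheme (or supply an alternative with a proof that the pieces really are g-domes) for the argument to go through.
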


\begin{proof}
We first prove the result for slicing with one plane $\Pi$. The result for arbitrary slicing then follows immediately.

Assume $\Pi$ is horizontal, with $Q$ the portion below $\Pi$ and $P'$ the portion above.
Denote by $x_i$ the vertices of $Q$ in $\Pi$, $i=1,\ldots,m$; call the top face of $Q$ with these vertices $F$.
Let $e$ be any edge of $F$, say $e=x_1 x_2$, and let $F'$ be the face of $Q$ sharing $e$ with $F$.
Call the plane lying on $F'$ $\Pi_0$.

Now imagine rotating $\Pi_0$ about $e$ toward $P'$, noting as it passes through each vertex $v_1,v_2,\ldots$ in that order.
For perhaps several consecutive vertices, the portion of $P'$ between the previous and the current plane is a g-dome, but rotating further
takes it beyond a g-dome.
More precisely, let $\Pi_1$ through $v_{j_1}$ be the plane such that, in the sequence
$$v_1,v_2,\ldots,v_{j_1},v_{j_1+1},\ldots,v_{j_2}, v_{j_2+1}\ldots \;,$$
the portion $G_1$ of $P'$ between $\Pi_0$ and $\Pi_1$, including the vertices $v_1,v_2,\ldots,v_{j_1}$, is a g-dome, but
the portion rotating further to include one more vertex, $v_{j_1+1}$, is not a g-dome.
$\Pi_2$ through $v_{j_2}$ is defined similarly: the portion $G_2$ between $\Pi_1$ and $\Pi_2$ including $v_{j_1+1},\ldots,v_{j_2}$
is a g-dome, but including $v_{j_2+1}$ it ceases to be a g-dome.
Here for each $\Pi_i, \Pi_{i+1}$ pair we can imagine the base $X$ of the g-dome to lie in $\Pi_{i+1}$.

Fig.~\ref{figDodecaGdome} illustrates the process.
Here $\Pi_1$ is through $v_1=v_{j_1}$, and the last g-dome lies between $\Pi_3$ and $\Pi$.

So we have now partitioned $P'$ into g-domes $G_1, G_2, \ldots$.
Lastly, we invoke Lemma~\ref{VertexTruncation} to reduce each g-dome to its base by tailoring, in the order $G_1, G_2, \ldots$. 
This reduces  $P'$ to just the top face $F$ of $Q$.

Having established the claim of the theorem for one slice, it immediately follows that it holds for an arbitrary number of slices to form $Q$.

For the second part, shrink $Q$ by a dilation until it fits inside $P$, and then apply the first part.
 \end{proof}

\begin{figure}
\centering
 \includegraphics[width=0.5\textwidth]{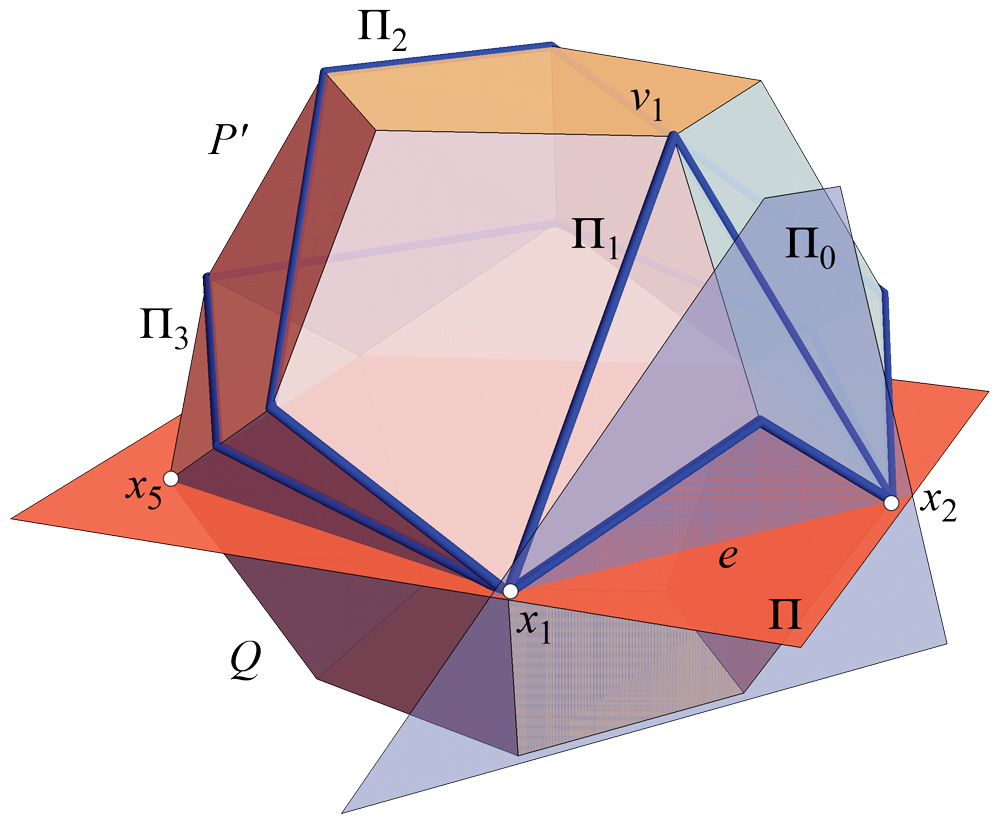}
\caption{Dodecahedron sliced by $\Pi$.
The polyhedron between each pair $\Pi_i, \Pi_{i+1}$ is a g-dome.
}
\label{figDodecaGdome}
\end{figure}

\begin{co}
\label{approx_tailoring}
For any convex polyhedron $P$ and any convex surface $S$, one can tailor $P$ to approximate a homothetic copy of $S$.
\end{co}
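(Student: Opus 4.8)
\textbf{Proof proposal for Corollary~\ref{approx_tailoring}.}
The plan is to reduce the approximation claim to Theorem~\ref{thmMainTailoring} by interposing a convex polyhedron that is simultaneously close to a homothet of $S$ and reachable by sculpting. First I would fix a target scale: since $P$ has bounded diameter, choose a homothet $S'$ of $S$ small enough that $S'$ is contained in the interior of $P$ (any sufficiently small dilation of $S$, suitably translated, works because $S$ is compact and $\inn P \neq \emptyset$). Now $S'$ is the boundary of a convex body $K'$ with $K' \subset \inn P$.

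Next I would invoke the standard fact from convexity that $K'$ can be approximated arbitrarily well, in the Hausdorff metric, by an inscribed convex polytope: for any $\varepsilon > 0$ there is a convex polyhedron $Q$ with $Q \subset K'$ and $d_H(\partial Q, S') < \varepsilon$ (take the convex hull of a sufficiently dense finite sample of points on $S'$). By shrinking $\varepsilon$ we make $Q$ as close to $S'$, hence to a homothet of $S$, as desired; and since $Q \subset K' \subset \inn P$, the chain $Q \subset P$ holds. Then Theorem~\ref{thmMainTailoring} applies directly: $Q$ can be sculpted from $P$ (slice with the planes carrying the faces of $Q$), so $Q$ can be tailored from $P$. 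Thus $P$ can be tailored to $Q$, which approximates the homothet $S'$ of $S$ within $\varepsilon$. If one wants the result phrased with a prescribed homothet of $S$ rather than "some" homothet, note the scale of $S'$ was the only free choice and it can be matched to whatever admissible target is requested (the only constraint is that the homothet fit inside $P$, which is necessary since tailoring is area-decreasing).

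I do not expect a serious obstacle here; this really is an easy corollary as the abstract advertises. The one point requiring a word of care is the sense of "approximate": tailoring produces a genuine convex polyhedron $Q$, not $S$ itself, so the approximation is in the intrinsic/extrinsic metric via the polyhedral approximant $Q$, and one should remark that $d_H$-closeness of $\partial Q$ to $S'$ also gives closeness of the intrinsic metrics (this follows because for convex surfaces Hausdorff convergence of the bodies implies convergence of the induced intrinsic metrics). The mild subtlety is therefore bookkeeping about which notion of "closely as desired" is meant, not any geometric difficulty; everything substantive is already contained in Theorem~\ref{thmMainTailoring}.
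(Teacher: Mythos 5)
Your proposal is correct and follows essentially the same route as the paper: place a homothet of $S$ inside $P$, approximate it by a convex polytope $Q \subset P$, and invoke Theorem~\ref{thmMainTailoring}. The only cosmetic difference is that the paper builds $Q$ by slicing $P$ with planes tangent to $S$ (a circumscribed approximant, so the sculpting is manifest), whereas you take the convex hull of sample points on $S'$ (an inscribed approximant) and then observe it can be sculpted; both are valid, and your extra remark on Hausdorff versus intrinsic convergence is a reasonable clarification the paper leaves implicit.
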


\begin{proof}
Bring a homothetic copy of $S$ inside $P$.
Perform a series of slicings of $P$ with planes tangent to $S$.
Any degree of approximation desired can be achieved by
increasing the number of plane splicings. 
Call the result of these slicings $Q$.
Now apply Theorem~\ref{general_slice}.
\end{proof}

As we mentioned in the Abstract, an informal view of this corollary is that $P$ can be ``whittled" to e.g., a sphere $S$.


\section{A tailoring algorithm}
\label{Tailor_1}\label{secTailor_1}

In this section we follow Theorem~\ref{general_slice} to yield an algorithm for achieving the tailoring of $P$ to $Q$.
The steps will move from sculpting slices to g-domes to vertex truncations, i.e., pyramid removals.
Each pyramid removal is achieved by digon excisions and suturings.
We analyze the complexity in three parts:
\begin{itemize}
\squeezelist
\item Algorithm~1: slice $\to$ g-domes.
\item Algorithm~2: g-dome $\to$ pyramids.
\item Algorithm~3: pyramid $\to$ digons.
\end{itemize}
The first two algorithms operate on the extrinsic $3$-dimensional structure of the
polyhedra.
The third algorithm instead processes its calculations on the intrinsic structure of the surface.

%
%


\subsection{Slicing algorithm}

\begin{algorithm}[htbp]
\caption{Following the sculpting of $Q$, partition sliced-off portions of $P$ into g-domes.}
\DontPrintSemicolon

    \SetKwInOut{Input}{Input}
    \SetKwInOut{Output}{Output}

    \Input{Convex polyhedra $P$ and target $Q \subset P$}
    \Output{$O(n)$ g-domes, each possibly $O(n)$ vertices.}
       
     \BlankLine

\For{each of $O(n)$ faces $F_i$ of $Q$}{

{Slice $P$ by $\Pi_i$, the plane containing $F_i$.}

{Remove the portion $P'$ above $\Pi_i$.}

\tcp{Following Theorem~\ref{general_slice}.}

{$P'$ is partitioned into g-domes, with a total of $O(n)$ vertices.}

}

     
\end{algorithm}


\paragraph{Algorithm~1 Complexity Analysis.}
Let $n=\max\{ |P|, |Q| \}$ be the number of vertices of the larger of $P$ or $Q$; so $|P|,|Q| = O(n)$.\footnote{
A finer analysis would treat the number of vertices of $Q$ and $P$ independently, say, $Q$ with $m$ vertices.
The algorithm steps would be the same, but the complexities would be apportioned differently.
}
We follow Theorem~\ref{general_slice} for the slicing to g-domes.
It is obvious that there will be $\O(n)$ slices, one per face of $Q$, and any particular slice might result in g-domes with a total of $\O(n)$ vertices.
What is unclear is if each of $\O(n)$ slices can result in $\O(n)$ vertices. 
The example in the next subsection illustrates this issue.

We leave it as an open problem whether there is an example that leads to complexity $\O(n)$ per $\O(n)$ slice for every possible slice ordering. 
The below example shows this is perhaps not a straightforward question.
In the absence of a resolution, we will assume only that the $O(n)$ slices have each complexity $O(n)$.

Returning to the top-level analysis, whether a slice results in many g-domes with $O(n)$ vertices, or one g-dome of $O(n)$ vertices, does not affect the subsequent complexity calculations.


\subsection{Complexity of sculpting}

\begin{ex}
Consider Fig.~\ref{Qslice_PyPy_48}, two nested pyramids $P,Q$ sharing a common regular polygon base.
The complexity of slicing $P$ with face planes of $Q$ is order sensitive.
\end{ex}

\begin{figure}
\centering
 \includegraphics[width=1.0\textwidth]{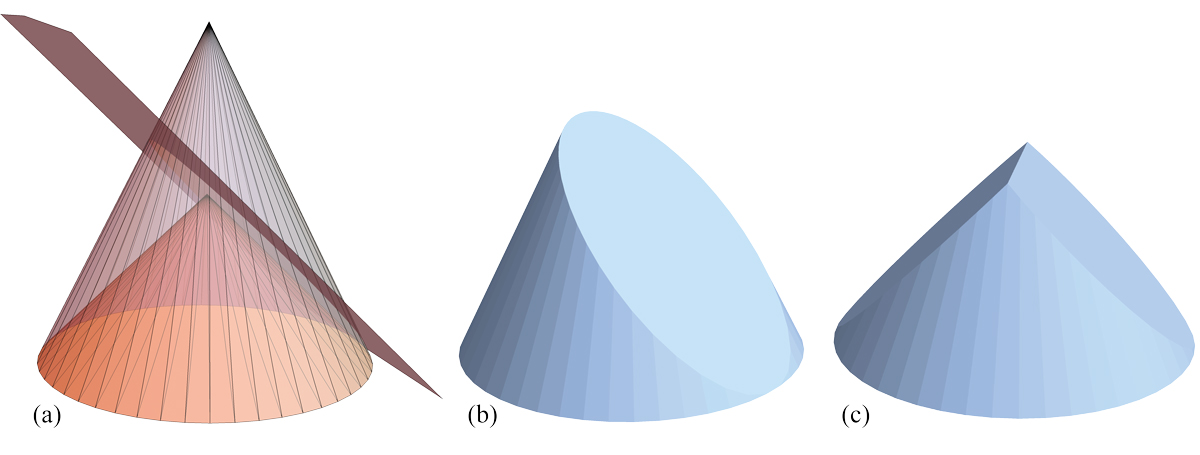}
\caption{$Q \subset P$. The shared base is a regular polygon of $n{=}48$ sides. 
(a)~One slice-plane $\Pi_0$ lying on a face of $Q$.
(b)~After slicing by $\Pi_0$.
(c)~A second, opposite slice $\Pi_{n/2}$.
}
\label{Qslice_PyPy_48}
\end{figure}

\begin{proof}
Let the faces of $Q$ ordered in sequence around the base be $F_0,F_1,\ldots,F_{n-1}$, each $F_i$ determining a plane $\Pi_i$.
Let $P_i$ be the polyhedron after slicing $P$ with planes $\Pi_0,\Pi_1,\ldots,\Pi_i$.
$\Pi_0$ cuts $n-2$ edges of $P$, as shown in Fig.~\ref{Qslice_PyPy_48}(a,b), and effectively removes half the edges of $P$ from later slices.
If $P$ is sliced in the order $i=1,2,\ldots,n-2$, following adjacent faces around the base, each plane cuts a diminishing number of the remaining edges.
An explicit calculation shows that plane $\Pi_i$ cuts $\lfloor \frac{n+1-i}{2} \rfloor $ edges of $P_{i-1}$.
And because
$$
\sum_{i=1}^{n-2} \left\lfloor \frac{n+1-i}{2} \right\rfloor  = \Omega( n^2 ) \;,
$$
with this plane-slice ordering, $\O(n)$ slices each have $\O(n)$ vertices, for a total quadratic complexity, $\O(n^2)$.

\medskip

However, if one instead orders the slices in a binary-search pattern, then the total complexity is $\O(n \log n)$, as we now show.
Let $n=2^m$ be a power of $2$ without loss of generality. 
The pattern is slicing with planes lying on faces $F_i$ with indices in the order
$$
s = \left( 0, \frac{n}{2}, \frac{n}{4}, \frac{3n}{4}, \frac{n}{8}, \frac{3n}{8}, \frac{5n}{8}, \frac{7n}{8}, \ldots \right) \;.
$$
We partition this sequence $s$ of indices
into subsequences, $s= (0, s_1, s_2, \ldots, s_m)$, as follows:
\begin{eqnarray*}
s_1 &=& \left( \frac{n}{2} \right)\\
s_2 &=& \left( \frac{n}{2^2}, \frac{3n}{2^2} \right)\\
s_3 &=& \left( \frac{n}{2^3}, \frac{3n}{2^3}, \frac{5n}{2^3}, \frac{7n}{2^3} \right)\\
\mbox{} &\cdots& \mbox{} \\
s_k &=& \left( \frac{j \,n}{2^k} \right), \; \textrm{where} \; j = 1,3,5,\ldots, 2^k{-}1  \; .
\end{eqnarray*}
Notice that the number of indices in sequence $s_k$, $|s_k|=2^{k-1}$.
As a check, the total number of indices in $s$ is
$$
1+ \sum_{k=1}^m |s_k| = 1+ \sum_{k=1}^m 2^{k-1} = 2^m = n \;.
$$

One can calculate that the slice at $\displaystyle{i= \frac{j \, n}{2^k}}$ only cuts $\displaystyle{\frac{n}{2^k}}$ edges of $P_{i-1}$. 
Only $\displaystyle{\frac{n}{2^{k-1}}}$ edges are ``exposed" to $\Pi_i$: for example, for $k=2$ and $i=1/4$, $n/2$ edges
are possibly available for cutting by $\Pi_i$, as can be seen in Fig.~\ref{Qslice_PyPy_48}(c).
However, because of the slant of $\Pi_i$, only half of those, $n/4$, are in fact cut by $\Pi_i$.

Now we compute the total number of edges sliced by the planes following the sequence $s$.
Because $|s_k|=2^{k-1}$, and each slice in $s_k$ cuts $\displaystyle{\frac{n}{2^k}} $edges, the total number of cuts over all $k$ is
$$
\sum_{k=1}^m 2^{k-1} \frac{n}{2^k} = n\sum_{k=1}^m \frac{1}{2} = n \, \frac{m}{2} \;.
$$
And since $m=\log n$, the total complexity is $\O(n \log n)$,
or an average of $\O(\log n)$ for each of $\O(n)$ slices.
\end{proof}


\subsection{G-domes algorithm}

\begin{algorithm}[htbp]
\caption{Tailor one g-dome $G$ to its base $X$.}
\DontPrintSemicolon

    \SetKwInOut{Input}{Input}
    \SetKwInOut{Output}{Output}

    \Input{A g-dome $G$ of $O(n)$ vertices}
    \Output{$O(n)$ pyramids, size $O(n)$; and $O(n^2)$ pyramids, size $O(1)$.}

    \BlankLine

\tcp{Following Theorem~\ref{pyra-dome}.}



\For{each of $k=O(n)$ vertex-degree reductions}{

{As in Theorem~\ref{pyra-dome}, slice with plane: $O(k)$.}

{Remove pyramid $P$ of $O(k)$ vertices.}

{``Clean-up" by removing $k$ pyramids each of size $O(1)$.}

}

     
\end{algorithm}

\paragraph{Algorithm~2 Complexity Analysis.}

We follow Theorem~\ref{pyra-dome} for partitioning each g-dome $G$ into pyramids.
Each vertex $v_i$ of the top-canopy of $G$ is removed,
as in Fig.~\ref{figDomeSlice_begend}, until only one remains.
Removal of each $v_i$ follows the degree-removal steps
illustrated in 
Fig.~\ref{figSliceAway_123456}.

Because the sum of the vertex degrees of a g-dome is $2E = O(n)$,
the asymptotic complexity of processing a g-dome with many vertices in its top-canopy
is no different than it is for just two vertices as in Fig.~\ref{figDomeSlice_begend}.
Moreover, we can assume that $v_2$ has degree-$3$ while
$v_1$ has degree-$k$, with $k=O(n)$.

First a plane slice results in a pyramid of $k$ vertices with apex $v_1$, which is removed
(and reduced by Algorithm~3).
Next follows a ``clean-up" phase that removes $O(k)$ pyramids
each of $4$ or $5$ vertices, so of constant size, $O(1)$.

This is then repeated for the new apex of degree $k-1$:
removal of a pyramid of $k-1$ vertices, and cleanup of $O(k-1)$ pyramids of constant size.
After iterating through $k,k-1,k-2,\ldots$, the algorithm has sliced off
$O(k^2)$ pyramids of constant size,
and $O(k)$ pyramids of size $O(k)$.
In the worst case $k=O(n)$.

\subsection{Pyramid algorithm}

\begin{algorithm}[htbp]
\caption{Tailor one pyramid $P$ to its base $X$.}
\DontPrintSemicolon

    \SetKwInOut{Input}{Input}
    \SetKwInOut{Output}{Output}

    \Input{A pyramid $P$ of $O(n)$ vertices}
    \Output{$O(n)$ digons whose removal flattens $P$ to $X$. }

    \BlankLine

\tcp{Following Lemma~\ref{VertexTruncation}.}

\tcp{Assume apex degree-$k$, with $k=O(n)$.}

\For{each of $x_i$, $i=1,2,\ldots,k$}{

{Construct digon $D_i(x_i,y_i)$: Locate $y_i$.}

{Locate $y_i$ by tracing geodesics: $O(k)$.}

}

\end{algorithm}

\paragraph{Algorithm~3 Complexity Analysis.}
Lastly we concentrate on the cost of removing one pyramid $P$ of $O(n)$ vertices.
Following Lemma~\ref{VertexTruncation}, this requires $O(n)$ digon removals.
For each digon $D_i(x_i,y_i)$, we need to calculate the location of $y_i$
on $C(x_i)$; then $y_i$ becomes a vertex for the removal of the next
digon $D_{i+1}$.
Fortunately, there is no need to compute the cut locus $C(x_i)$.

Let us focus on locating $y_i$, after the removal of $D_{i-1}(x_{i-1},y_{i-1})$
the previous iteration.
Recall that $y_i$ is the only vertex on $L_i$,
so it is immediate to find the shortest path $\g$ from $x_i$ to $y_i$.
We know the angle $\q=\o_Q(x_i) - \o_P(x_i)$ needed to be removed
by $D_i$,
so we know that geodesics $\g_1$ and $\g_2$ at angles $\q/2$ left and right
of 
$\g = x_i y_i$ will meet at $y_i$.
Tracing $\g_1$ and $\g_2$ over the surface might
cross sealed digons $D_1,\ldots,D_{i-1}$, the ``seals" $\bar{\s}_i$ in Fig.~\ref{DigonsHexFlatX}(b).
So the cost of computing $\g_1 \cap \g_2 = y_i$ is $O(n)$.

Thus the complexity of tailoring one pyramid of $O(n)$ vertices is $O(n^2)$.

Note that we do not need the extrinsic $3$-dimensional structure of the intermediate
polyhedra guaranteed by AGT to perform the calculations,
as is evident in the example described in
Fig.~\ref{DigonsHexFlatX}.

\subsection{Overall Tailoring Algorithm}
Putting the three algorithm complexities together, we have:
$$
O(n) \;\textrm{g-domes} \;\times\; O(n) \;\textrm{pyramids/g-dome} \;\times\; O(n^2) \;\textrm{per pyramid}
= O(n^4) \;.
$$

\noindent
We summarize in a theorem:

\begin{thm}
\label{SliceAlgorithm}
Given convex polyhedra $P$ and $Q$ of at most $n$ vertices each, and $Q \subset P$,
$P$ can be tailored to $Q$, following a sculpting of $Q$ from $P$ as in Theorem~\ref{general_slice}, in time $O(n^4)$.
\end{thm}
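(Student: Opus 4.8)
The plan is to assemble the $O(n^4)$ bound directly from the three algorithmic stages already analyzed, verifying that the stage-by-stage counts compose multiplicatively in the obvious way. First I would invoke Theorem~\ref{general_slice} to justify that the reshaping can indeed be carried out as: slice $P$ by the $O(n)$ face-planes of $Q$, partition each sliced-off portion $P'$ into g-domes (Algorithm~1), reduce each g-dome to its base by peeling off pyramids (Theorem~\ref{pyra-dome}, Algorithm~2), and reduce each pyramid to its base by a sequence of digon excisions (Lemma~\ref{VertexTruncation}, Algorithm~3). The correctness is already established; what remains is purely a bookkeeping argument for the running time.

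Next I would collect the per-stage bounds. From the Algorithm~1 analysis, there are $O(n)$ slices, and we adopt the standing assumption that each slice produces g-domes with a total of $O(n)$ vertices; hence across all slices there are $O(n)$ g-domes to process (or equivalently a total of $O(n)$ g-dome-vertices, which is the figure that actually drives the later counts). From the Algorithm~2 analysis, a g-dome with $k=O(n)$ canopy structure decomposes into $O(k)$ pyramids of size $O(k)$ together with $O(k^2)$ pyramids of constant size; since $k=O(n)$, each g-dome contributes $O(n)$ ``large'' pyramids. From the Algorithm~3 analysis, tailoring a single pyramid of $O(n)$ vertices costs $O(n)$ digon removals, each costing $O(n)$ to locate the new endpoint $y_i$ by tracing the two bounding geodesics across the at-most-$O(n)$ previously sealed digons, for a total of $O(n^2)$ per pyramid (the constant-size pyramids cost $O(1)$ each, so the $O(n^2)$ of them contribute only $O(n^2)$ overall, which is absorbed).

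Then I would multiply: $O(n)$ g-domes, times $O(n)$ pyramids per g-dome, times $O(n^2)$ per pyramid, gives $O(n^4)$; the ``clean-up'' constant-size pyramids add only a lower-order $O(n^2)$ term that does not affect the bound. I would also note briefly that no stage requires reconstructing the three-dimensional shape of an intermediate AGT polyhedron: Algorithms~1 and~2 operate on extrinsic data that is explicitly maintained by the plane slicing, and Algorithm~3 operates entirely on the intrinsic surface metric (cf.\ the worked example of Fig.~\ref{DigonsHexFlatX}), so each geodesic-tracing step is genuinely $O(n)$ and the composition is legitimate.

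The main obstacle, and really the only subtle point, is the standing assumption flagged in the Algorithm~1 analysis: it is left open whether \emph{every} slice ordering keeps the per-slice g-dome vertex count at $O(n)$, and the nested-pyramids example shows a bad ordering can cost $\Omega(n^2)$ in total over all slices while a binary-search ordering costs only $O(n\log n)$. So the honest statement is conditional on choosing (or simply assuming) that the $O(n)$ slices each yield $O(n)$ g-dome vertices; under that assumption the product is $O(n^4)$, and I would state the theorem with that caveat made explicit, exactly as the surrounding text does. Everything else is routine arithmetic on the three complexity analyses already carried out.
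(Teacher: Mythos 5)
Your proposal is correct and follows essentially the same approach as the paper: you assemble the three stage-wise bounds from the complexity analyses of Algorithms~1--3 (slice $\to$ g-domes, g-dome $\to$ pyramids, pyramid $\to$ digons) and multiply $O(n)\times O(n)\times O(n^2)=O(n^4)$, noting that the constant-size clean-up pyramids contribute only lower order and flagging the same open assumption about per-slice g-dome complexity. This is exactly the paper's ``Overall Tailoring Algorithm'' argument that precedes the theorem statement.
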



%


\section{Tailoring without sculpting}
\label{no_sculpt}\label{secNo_sculpt}

In this section we prove a slightly weaker version of Theorem~\ref{main},
weaker in the sense that the homethet of $Q$ obtained could be arbitrarily small.
Nevertheless, this proof has its advantages, highlighted in Section~\ref{Open}~(2). 

Recall that an {\it isosceles tetrahedron} $T$ is a tetrahedron whose opposite edges are pairwise equal.
Therefore, for any such $T$, the total angle at each vertex is precisely $\pi$ and its faces are acute triangles.
Consequently, the star unfolding of $T$ with respect to any of its vertices results in an acute planar triangle.

\begin{lm}
\label{Iso_tetra}
Every convex polyhedron $Q$ has at least one pair of vertices admitting vertex merging, unless it is an isosceles tetrahedron or a doubly-covered triangle.
\end{lm}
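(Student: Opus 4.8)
The plan is to argue by contradiction and curvature-counting. Suppose $Q$ is a convex polyhedron that admits \emph{no} pair of vertices that can be vertex-merged. Recall from Section~\ref{secVertexMerging} that two vertices $v_1,v_2$ of curvatures $\o_1,\o_2$ can be vertex-merged precisely when $\o_1+\o_2<2\pi$ (so that AGT applies to the glued-in pair of triangles). Hence our assumption means that $\o(u)+\o(w)\ge 2\pi$ for \emph{every} pair of distinct vertices $u,w$ of $Q$. First I would combine this with the Gauss--Bonnet identity $\sum_v \o(v)=4\pi$ and the fact that each $\o(v)\in(0,2\pi)$ to pin down the number of vertices $N=|Q|$.

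If $N\ge 5$, pick the two vertices of smallest curvature, say $\o_1\le\o_2\le\cdots\le\o_N$; then $\o_1+\o_2\le \tfrac{2}{N}\sum_v\o(v)=\tfrac{8\pi}{N}\le\tfrac{8\pi}{5}<2\pi$, so those two can be merged, a contradiction. Thus $N\le 4$. Since a convex polyhedron has at least $4$ vertices (or degenerates to a doubly-covered polygon), the remaining cases are $N=4$ (a genuine tetrahedron or a doubly-covered quadrilateral) and the degenerate doubly-covered polygons with fewer ``true'' vertices. For $N=4$ with $\sum\o=4\pi$, the no-merge condition forces $\o_i+\o_j\ge 2\pi$ for all $i\ne j$; adding the three such inequalities coming from a fixed index, or summing all $\binom{4}{2}=6$ of them, gives $3\sum_v\o(v)\ge 6\cdot 2\pi$, i.e. $12\pi\ge 12\pi$, with equality — so every pairwise sum is \emph{exactly} $2\pi$, which forces $\o_1=\o_2=\o_3=\o_4=\pi$. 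A convex polyhedron with four vertices each of curvature $\pi$ is exactly an isosceles tetrahedron (equivalently, by AGT, the gluing of an acute triangle to its mirror image along the perimeter, folding at the midpoints), as recalled just before the lemma; the degenerate case of all curvatures $\pi$ with the ``tetrahedron'' flattened is the doubly-covered triangle. So the only polyhedra with no mergeable pair are isosceles tetrahedra and doubly-covered triangles, which is the claim.

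The main obstacle I expect is handling the boundary/degenerate cases cleanly: making sure that ``vertex'' is counted consistently (a doubly-covered polygon has its polygon-vertices as the curvature-$<2\pi$ points, and one must check whether the $N\ge 5$ averaging argument and the $N=4$ equality analysis cover doubly-covered quadrilaterals and pentagons correctly), and confirming that curvature $=\pi$ at a point is compatible with being a genuine vertex rather than a flat point. A doubly-covered quadrilateral has four vertices of curvatures $\o_i$ summing to $4\pi$ with each $\o_i\in(0,2\pi)$; the same equality argument forces all $\o_i=\pi$, i.e. a doubly-covered rectangle, and a rectangle's two pairs of opposite vertices each sum to exactly $2\pi$ — this is precisely the limiting ``edge-tailoring'' case noted in Example~\ref{iso_tetra}, so one should decide whether the lemma intends to list doubly-covered rectangles separately or subsume them under ``isosceles tetrahedron'' via degeneration; I would state explicitly that a doubly-covered rectangle is the degenerate isosceles tetrahedron and thus already covered. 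The rest is the routine averaging inequality and the identification of the equality case with the isosceles tetrahedron.
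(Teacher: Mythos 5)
Your proof is correct and follows essentially the same route as the paper: assume no mergeable pair, i.e.\ $\o_i+\o_j\ge 2\pi$ for all pairs, combine with Gauss--Bonnet $\sum\o_i=4\pi$ to force $n\le 4$, and then show $n=4$ forces all curvatures $\pi$ (hence an isosceles tetrahedron, possibly degenerate) while $n=3$ gives a doubly-covered triangle. The only small difference is cosmetic: for $n=4$ you sum all six pairwise inequalities to get $12\pi\ge 12\pi$, while the paper uses the fact that the two smallest summands of four positives summing to $4\pi$ total at most $2\pi$ with equality iff all are $\pi$; and for $n=3$ the paper invokes the cut-locus/path lemma where you simply appeal to the standard fact that three conical points force a doubly-covered triangle --- both are fine.
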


\begin{proof}
We may assume that, for any two vertices of $Q$, their sum of curvatures is at least $2 \pi$, since otherwise vertex merging is possible
(see Subsection~\ref{secVertexMerging}).

In this case, it must be that $n \leq 4$. Indeed, since $\sum_{v \in Q} \o(v) = 4\pi$ (by the Gauss-Bonnet theorem),
if the sum of at least $5$ positive numbers is $4 \pi$ then the smallest two have sum $< 2\pi$.

If $n=3$, the cut locus of any vertex is a line-segment, by Lemma~\ref{basic}, so $Q$ is a  doubly-covered triangle (see Lemma~\ref{path}).

If $n=4$ then necessarily all vertex curvatures of $Q$ are $\pi$. 
Indeed, if the sum of $4$ positive numbers is $4 \pi$ then the smallest two have the sum $\leq 2\pi$, with equality if and only if all are $\pi$. 
So $Q$ is an isosceles tetrahedron.
\end{proof}

\begin{thm}
\label{main-}
\label{thmTailorNoSculpting}
For any given convex polyhedra $P$ and $Q$, one can tailor $P$ ``without sculpting'' until it becomes homothetic to $Q$.
\end{thm}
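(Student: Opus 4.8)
The plan is to reduce $P$ to a homothet of $Q$ by working backwards: instead of cutting $P$ down to $Q$, we will use vertex merging on $Q$ to simplify it down to a canonical ``base case'' (an isosceles tetrahedron or a doubly covered triangle), then reverse those merges on the $P$-side via tailoring. More precisely, I would first run Lemma~\ref{Iso_tetra} repeatedly on $Q$: as long as $Q$ is not an isosceles tetrahedron or a doubly covered triangle, it has a pair of vertices admitting vertex merging, and each merge reduces the vertex count by one. So after finitely many merges $Q$ becomes some $Q_0$ which is an isosceles tetrahedron or a doubly covered triangle. Record this sequence of merges.

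The core step is to show that a single vertex merge on $Q$ corresponds, in reverse, to a tailoring (a digon excision) on $P$ — or more precisely, that \emph{if} we can tailor $P$ to $Q$, then we can tailor $P$ to the merged polyhedron too, and conversely that reversing the whole chain lets us tailor $P$ down to $Q_0$ and then ``grow'' it back. Here I would invoke Section~\ref{secVertexMerging}: vertex merging is essentially the inverse operation of a digon-tailoring in which the two digon endpoints are already vertices (the number of vertices \emph{decreases} by one in that case). So a chain of merges $Q = Q_m \to Q_{m-1} \to \cdots \to Q_0$ is, read backwards, a chain of (degenerate-endpoint) tailorings reversed, i.e.\ a chain of vertex-merge-type tailorings from $Q_0$ up to $Q$. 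The remaining task is then: (i) tailor $P$ to a (small) homothet of $Q_0$, and (ii) reverse the recorded chain. For (i), since $Q_0$ is an isosceles tetrahedron or doubly covered triangle, and since by Theorem~\ref{thmMainTailoring} any homothet of $Q_0$ small enough to fit inside $P$ can be tailored from $P$ (this is the ``with sculpting'' result, which we are allowed to use) — actually, to genuinely avoid sculpting, I would instead argue directly that $P$ can be tailored down to \emph{some} isosceles tetrahedron or doubly covered triangle: repeatedly tailor away vertices of $P$ (each tailoring removes a vertex or adds one, but a vertex-merge-type tailoring removes one), using Lemma~\ref{Iso_tetra} applied to $P$ itself, until $P$ reaches such a base shape $P_0$.

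Now the crux is matching $P_0$ to a homothet of $Q_0$. Both are isosceles tetrahedra (or doubly covered triangles), but need not be similar. Here I expect the main obstacle: an isosceles tetrahedron is determined up to homothety by the shape of its (acute) triangular face, a two-parameter family, and there is no reason $P_0$'s face is similar to $Q_0$'s. The fix is to observe that further tailoring of an isosceles tetrahedron can change its shape continuously — cutting a small digon at a vertex redistributes curvature and deforms the face — so I would show that from $P_0$ one can tailor to reach an isosceles tetrahedron (or doubly covered polygon) similar to $Q_0$, perhaps passing through intermediate $5$-vertex shapes as in Example~\ref{deg-vol} and then re-merging. Alternatively, and more cleanly, I would not insist $P_0$ be reached first: instead interleave. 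Tailor $P$ down following Lemma~\ref{Iso_tetra} but \emph{aim} each tailoring so that the curvature sequence tracks a scaled copy of $Q$'s. This is where the rigidity result Theorem~\ref{rigid} re-enters: once the vertices and curvatures are matched, the polyhedron is determined. So the real content is an inductive claim: \emph{for every convex polyhedron $Q$ there is a sequence of tailorings of $P$ whose resulting polyhedron has the same combinatorial/curvature data as a homothet of $Q$}, proved by induction on $|Q|$ with base case Lemma~\ref{Iso_tetra}'s exceptional shapes, and inductive step reversing one vertex merge of $Q$ while performing the corresponding digon excision on $P$.

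The hardest part, concretely, will be the base case and the ``similar isosceles tetrahedron'' matching: showing that tailorings of one isosceles tetrahedron can reach a homothet of any other. I would handle this by an explicit continuity/deformation argument — small digon excisions at a vertex of an isosceles tetrahedron yield a $2$-parameter family of nearby convex polyhedra (re-merging any spurious vertex), and a dimension/connectedness argument shows the full moduli space of isosceles-tetrahedron shapes is reachable, shrinking scale as needed — and then appeal to AGT plus Theorem~\ref{rigid} to conclude the reached shape is exactly the desired homothet. The homothet may end up arbitrarily small, exactly as the theorem statement permits, because each tailoring is area-decreasing and the repeated base-case adjustments compound the shrinkage.
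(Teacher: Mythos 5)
Your high-level strategy matches the paper's: induct on $|Q|$, use Lemma~\ref{Iso_tetra} to vertex-merge $Q$ down to an exceptional shape, and reverse each merge as a digon excision on the $P$-side (correct: the two triangles inserted in a merge are bounded by exactly the digon that undoes it). This inductive step is precisely what the paper does. But the base of the induction, which you rightly flag as ``the hardest part,'' is where your plan breaks down, for two reasons.

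First, you propose to reduce $P$ itself to an isosceles tetrahedron or doubly-covered triangle ``using Lemma~\ref{Iso_tetra} applied to $P$.'' That lemma only guarantees a pair of vertices $v_1,v_2$ with $\o_1+\o_2<2\pi$, which licenses \emph{vertex merging}, i.e., \emph{adding} two triangles along a geodesic from $v_1$ to $v_2$. That is not a tailoring of $P$ (tailoring is area-decreasing). It does not supply a digon $D(v_1,v_2)\subset P$ surrounding a third vertex, and in general no such digon need exist. So you have no mechanism for actually cutting $P$ down to your $P_0$. The paper instead reduces $P$ to a doubly-covered convex polygon $P_{\mathrm{flat}}$ by a cut-locus argument unrelated to Lemma~\ref{Iso_tetra}: pick a generic point $x$, take the path $\r$ in the tree $C(x)$ between two leaves, and excise digons with one endpoint at $x$ that prune away each subtree of $C(x)\setminus\r$; by Lemma~\ref{Truncation} and Lemma~\ref{path} the result is flat. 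You have no substitute for this step.

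Second, even granting a reduction to some base shape $P_0$, your plan to match $P_0$ to a homothet of $Q_0$ by exploring a ``$2$-parameter moduli space of isosceles tetrahedra'' via small digon cuts and re-merges is a sketch of a hard deformation argument with no clear termination, and it is not what rigidity (Theorem~\ref{rigid}) gives you: that theorem compares polyhedra agreeing off a single vertex neighborhood, not polyhedra with matching curvature lists. The paper sidesteps this entirely by routing through flat (doubly-covered) polygons: tailoring one flat convex polygon to a smaller flat triangle or flat rectangle inside it is essentially trivial; and when $Q_0$ is an isosceles tetrahedron, the paper lifts the rectangle $Q_{\mathrm{flat}}$ to $Q_0$ not by navigating a moduli space but by the degenerate edge-cut-and-reglue tailoring of Example~\ref{iso_tetra} (possible precisely because all four vertex curvatures are $\pi$). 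Without these two ideas --- the cut-locus flattening of $P$ and the flat-to-tetrahedron regluing --- your base case does not close.
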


\begin{proof}
We proceed by induction over the number $n$ of vertices of $Q$.

\paragraph{Base of induction: $n=3$.}
In this case, $Q$ is a doubly-covered triangle.
We first show how to reduce $P$ to a doubly-covered convex polygon $Q'$.
It is then easy to tailor $Q'$ to $Q$.
We will use the cube example from Fig.~\ref{StarUnfCube} to illustrate the steps.

Let $x \in P$ be a point joined by unique geodesic segments to all vertices of $P$, 
and let $\rho$ be the unique path in $C(x)$ joining a pair of leaves of $C(x)$, i.e., joining the 
vertices $v_i$ and $v_j$ of $P$.

Then $C(x) \setminus \rho$ is a finite set of trees $T_i$.
Cut off from $P$ all $T_i$s by excising digons with one endpoint at $x$, and the other endpoint where $T_i$ joins $\rho$.
In Fig.~\ref{fig5}(a), $\rho$ connects $v_5$ and $v_7$, and separates four trees $T_i$.
After zipping each digon closed, we are left with a polyhedron $P_\textrm{flat}$ 
whose cut locus from the point corresponding to $x$ is precisely $\rho$ (by  Lemma~\ref{Truncation}).

\begin{figure}
\centering
 \includegraphics[width=0.9\textwidth]{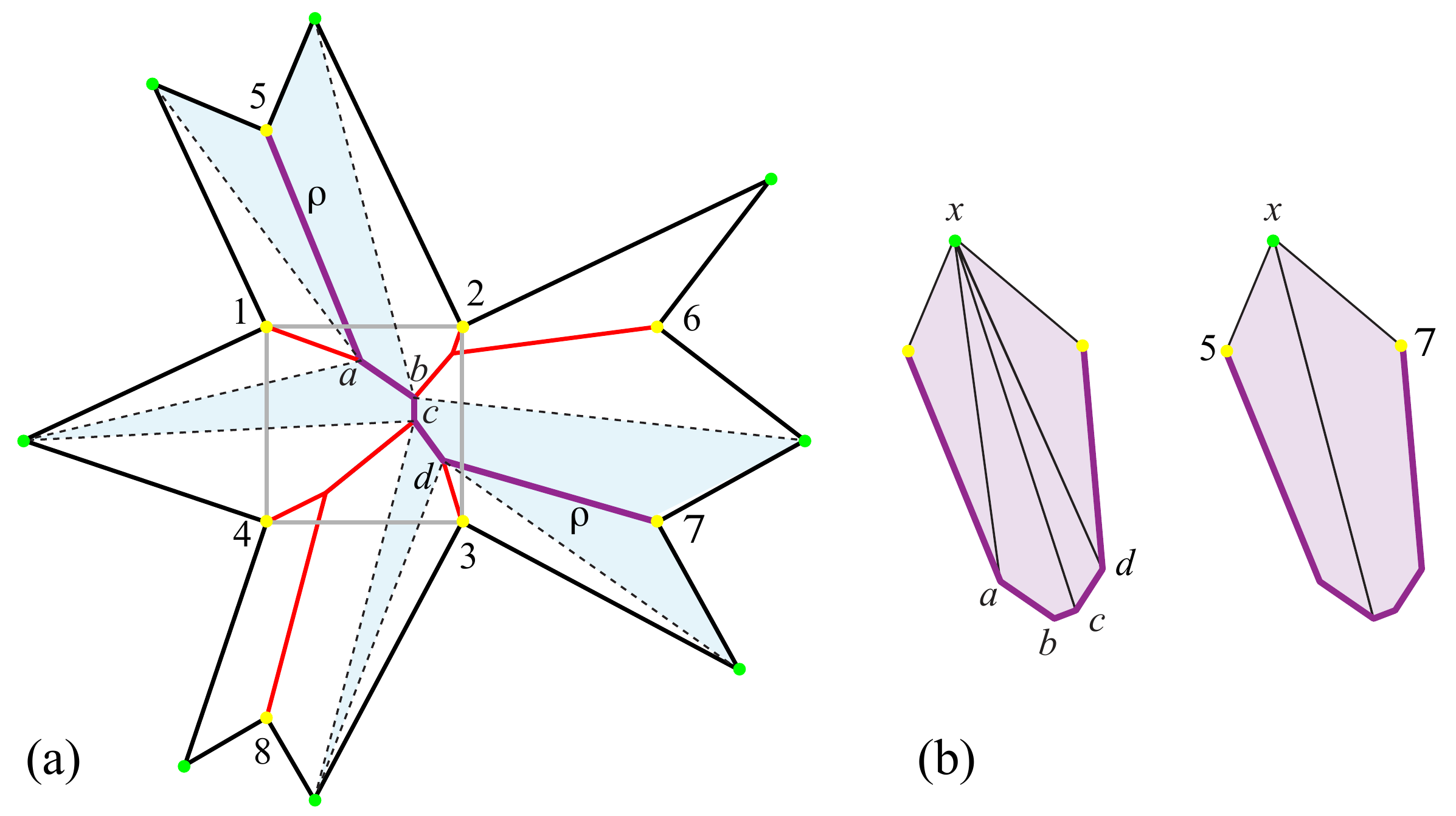}
\caption{Star unfolding of the cube in Fig.~\protect\ref{StarUnfCube}.
(a)~The path $\rho$ leaves four trees when removed from
$C(x)$. Excising four (white) digons leaves a surface (blue), which when
zipped closed folds to a doubly-covered $7$-gon, both sides of which
are shown in~(b).}
\label{fig5}
\end{figure}

Now we apply Lemma \ref{path}, which states that if $C(x)$ is a path, 
the polyhedron is a doubly-covered (flat) convex polygon, with $x$ on the rim. See Fig.~\ref{fig5}(b).
Now it is easy to tailor any doubly covered triangle $Q$ from $P_\textrm{flat}$.

\paragraph{General induction step.}
Assume we can tailor a homothetic copy of any polyhedron with $n-1$ vertices.
We want to prove the statement for $Q$ with $n$ vertices.

Assume first that $Q$ has two vertices whose sum of curvatures is strictly less than $2 \pi$, say $v_1$ and $v_2$.
Vertex merging $v_1$ and $v_2$  (by adding two back-to-back triangles $\triangle$ along the segment $\gamma$ connecting $v_1$ and $v_2$)
yields a convex polyhedron $Q'$ of $n-1$ vertices.
By the induction hypothesis, a homothetic copy of $Q'$ can be tailored from $P$.
Clearly, now $Q$ can be tailored from $Q'$ by cutting off $\triangle$s that we added to merge $v_1$ and $v_2$.

Assume now that for any two vertices of $Q$, their sum of curvatures is at least $2 \pi$.
In this case, $n \leq 4$ by Lemma~\ref{Iso_tetra}.

If $n=3$ we are done (by the induction base).

If $n=4$, $Q$ is an isosceles tetrahedron, again by Lemma~\ref{Iso_tetra}.

Recall from Example~\ref{iso_tetra} and Fig.~\ref{fig44} that such a $Q$ can be tailored to a doubly-covered rectangle $Q_\textrm{flat}$.
But it can also be tailored \emph{from} $Q_\textrm{flat}$, as that example shows. 
So now tailor $P$ to a doubly-covered polygon $P_\textrm{flat}$ as in the induction base case,
then tailor that flat polygon to a rectangle $Q_\textrm{flat}$, and finally tailor $Q_\textrm{flat}$ to $Q$, and we are done.
\end{proof}

\noindent
If the path $\rho$ in the above proof is chosen to be as long as possible,
then the flat $Q'$ has larger surface area than if $\rho$ is short.
See Open Problem~3 in Sec.~\ref{Open}.

\begin{rmk}
\label{tailor-}
The result $Q^T$ obtained by Theorem~\ref{main-} may be arbitrarily small compared to $Q$.

To see this, we return to the proof idea, rephrased for our purpose.
We excise digons from $P$, repeatedly until achieving a doubly-covered polygon $P_\textrm{flat}$.
We vertex-merge on $Q$, repeatedly until achieving a doubly-covered triangle or rectangle $Q_\textrm{flat}$.
We take a smaller copy $Q^s_\textrm{flat}$ of $Q_\textrm{flat}$ inside $P_\textrm{flat}$ and reverse the procedure, by tailoring out corresponding regions of what we added before.

\begin{enumerate}
\item The example of a regular pyramid shows that the area of $P_\textrm{flat}$ may be as small as $2/n$ of the original area of $P$.

\item The ratio between the area of $Q_\textrm{flat}$ and the area of $Q$ can be arbitrarily large.

\rm{To see this, consider an isosceles trapezoid $Z$ of base lengths $1$ and $1+ 2 \varepsilon$, and height $h$.
Its area is $(1+\varepsilon)h$.
Also consider the isosceles triangle $T$ obtained from $Z$ by extending its non-parallel sides until intersecting.
An elementary geometry argument provides the height of $T$, $(1+ 2 \varepsilon)h/2 \varepsilon$, and the area of $T$, $(1+ 2 \varepsilon)^2 h/4 \varepsilon$.

The ratio between the area of the doubles $Q_\textrm{flat}$ of $T$, and $Q$ of $Z$, is therefore 
$$
\frac{(1+ 2 \varepsilon)^2}{4  \varepsilon (1+ \varepsilon)} = 1+ \frac1{4  \varepsilon (1+ \varepsilon)}
$$ 
and can be arbitrarily large for $\varepsilon$ arbitrarily small.}
\end{enumerate}
The combination of $P_\textrm{flat}$ small and $Q_\textrm{flat}$ large leads to the arbitrarily-small claim for $Q^T$ with respect to $Q$.
\end{rmk}

\bigskip

%
%


\section{Another tailoring algorithm}
\label{Tailor_2}\label{secTailor_2}


\begin{algorithm}[htbp]
\caption{Tailor $P$ to $Q$.}
\DontPrintSemicolon

    \SetKwInOut{Input}{Input}
    \SetKwInOut{Output}{Output}

    \Input{Convex polyhedra $P$ and target $Q$}
    \Output{A tailored version of $P$ homothetic to $Q$}
    
    \BlankLine
    \tcp{Throughout "flat" = "doubly-covered"}
   
    \tcp{Vertex merge on $Q$ repeatedly} 
    \BlankLine

     \While{$Q \neq T_\textrm{isos}$ and $|Q| > 3$}{
    
      {Identify two vertices $v_i$ and $v_j$ such that $\o_i+\o_j < 2 \pi$.}
      
      {Vertex merge $v_i$ and $v_j$, reducing $Q$ by one vertex to $Q'$.}
      
      {$Q \leftarrow Q'$.}
        
      }
      \BlankLine
      \tcp{$Q$ now has $3$ or $4$ vertices.}
      \BlankLine
      
      \eIf(\tcp*[h]{$Q$ isosceles tetrahedron}){$Q = T_\textrm{isos}$}{
      {Special tailor $Q$ to flat rectangle $Q_\textrm{flat}$.}
      }
      {
      {$Q$ is already a flat triangle $Q_\textrm{flat}$.}
      }
       
     \BlankLine
     {Tailor $P$ to a flat polygon $P_\textrm{flat}$} \tcp{as in the proof base case.}
     
     {Tailor $P_\textrm{flat}$ to $Q_\textrm{flat}$.}
     
     {Scale $Q_\textrm{flat}$ to $Q^s_\textrm{flat}$ to fit on $P_\textrm{flat}$.}

     \BlankLine
    
     \tcp{Reverse the steps to reduce $Q$, each applied to $P$ starting
     with $Q^s_\textrm{flat}$.}
     
    \If(\tcp*[h]{so now a rectangle $R$}){$Q$ was an isos. tetrahedron $T_\textrm{isos}$}
    {Tailor $R$ back to $T_\textrm{isos}$.}
    
    \BlankLine
    \ForEach{vertex merging step that was applied to $Q$}
    {Reverse the step by cutting off the merge vertex.}
    
    \KwResult{A 3D polyhedron $Q^T$ homothetic to $Q$.}
     
\end{algorithm}

\noindent
In this section, we follow the proof of Theorem~\ref{main-} and convert it to a polynomial-time algorithm.
At a high level, $Q$ is reduced to a flat polygon $Q_\textrm{flat}$, $P$ is reduced to a flat polygon $P_\textrm{flat}$ and tailored to match $Q_\textrm{flat}$. 
Finally, the steps used to reduce $Q$ are reversed and applied to the flat remnant of $P$.

We illustrate the algorithm by tracing the steps to convert
a cube $P$ to the pentahedron $Q$ shown in Fig.~\ref{figPent}(a).
First, we reduce $Q$ as much as possible via vertex merging.
With just one vertex merge, we reach a regular tetrahedron, Fig.~\ref{figPent}(b,c), which is of course isosceles.
This then tailors to a rectangle $Q_\textrm{flat}$; see Fig.~\ref{figRegTetraRect}.

Now we reduce the cube $P$ to $P_\textrm{flat}$.
As we showed in Fig.~\ref{fig5}, this is a flat $7$-gon.
Now we tailor this to match the rectangle $Q_\textrm{flat}$.

After scaling $Q_\textrm{flat}$ to $Q^s_\textrm{flat}$ to fit on $P_\textrm{flat}$,
the remainder of the procedure reverses the steps that reduced $Q$ to $Q_\textrm{flat}$, but applied to the rectangle cut from $P_\textrm{flat}$. 
First the rectangle $R$ is restored to the regular tetrahedron by tailoring along one edge of $R$.
Then the vertex merging that produced the regular tetrahedron is reversed by cutting off vertex $d$ along $xy$.
We have now reached a homothetic copy of $Q$, considerably smaller than $Q$ but still homothetic.
And its surface is entirely composed of portions of the surface of $P$.

\begin{figure}
\centering
 \includegraphics[width=0.4\textwidth]{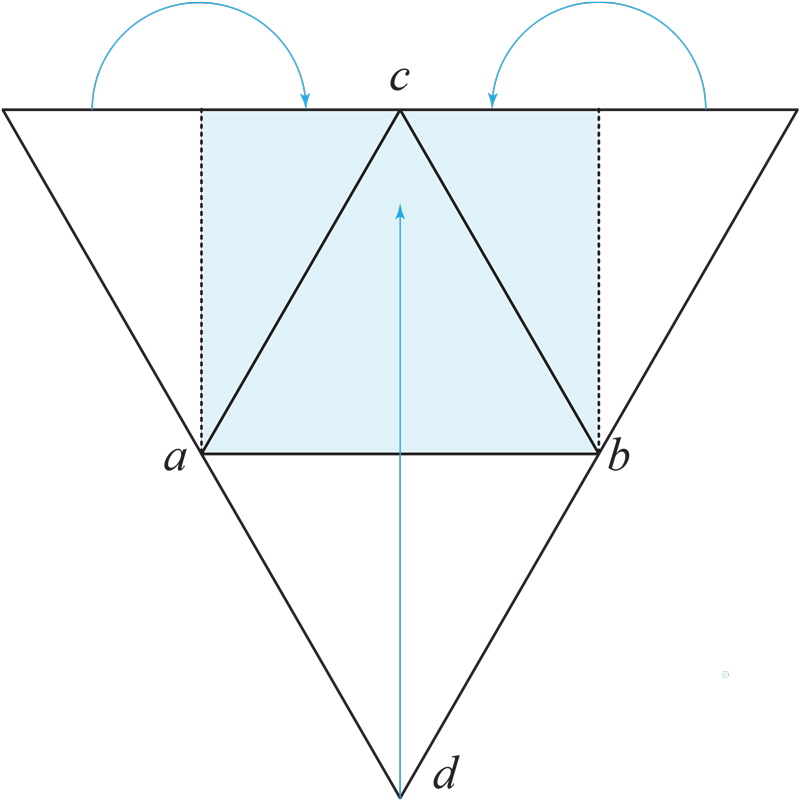}
\caption{Regular tetrahedron $abcd$ tailors to a doubly-covered rectangle $R$.
}
\label{figRegTetraRect}
\end{figure}

\medskip
We can now complete Theorem~\ref{main-}.
Let $|P|$ be the combinatorial size of the polyhedron $P$, i.e., the number of vertices.

\begin{thm}
\label{algo_main-}\label{thmAlgorithmNoSculpt}
For any given convex polyhedra $P$ and $Q$, one can tailor $P$ until it becomes homothetic to $Q$ in time $O(n^4)$, where $n=\max\{ |P|, |Q| \}$.
\end{thm}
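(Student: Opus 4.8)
The plan is to walk through Algorithm~4 phase by phase and bound the cost of each, then add the pieces. Let $n=\max\{|P|,|Q|\}$. The algorithm has three conceptual parts: (i) reduce $Q$ to a flat polygon $Q_\textrm{flat}$ by a chain of vertex mergings (plus one special "isosceles‑tetrahedron $\to$ rectangle" step if needed); (ii) reduce $P$ to a flat polygon $P_\textrm{flat}$ by excising the $O(n)$ digons cut off by the path $\rho$ in $C(x)$, and then tailor $P_\textrm{flat}$ to a scaled copy $Q^s_\textrm{flat}$ of $Q_\textrm{flat}$; (iii) reverse all the reduction steps of (i), applying each in reverse on the flat remnant of $P$. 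I would prove each of (i)--(iii) costs $O(n^4)$, whence the total is $O(n^4)$.

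For (i): each vertex merging removes exactly one vertex, so there are at most $n-1$ of them. For a single merging we must find a pair $v_i,v_j$ with $\o_i+\o_j<2\pi$ (possible by Lemma~\ref{Iso_tetra} until $Q$ is an isosceles tetrahedron or doubly-covered triangle); this is an $O(n^2)$ scan of curvature pairs, or better, but $O(n^2)$ suffices. We then cut along a geodesic segment $\g$ joining $v_i$ to $v_j$ and glue in two copies of the triangle $T$; computing a shortest geodesic between two vertices on a polyhedron of $O(n)$ vertices is polynomial — I would cite that it can be done in, say, $O(n^2)$ time, which is all we need here — and updating the combinatorial structure costs $O(n)$. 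So one merging is $O(n^2)$, and all of phase (i) is $O(n^3)$. The special step converting an isosceles tetrahedron to a doubly-covered rectangle (Example~\ref{iso_tetra}, Fig.~\ref{fig44}) is $O(1)$ combinatorially.

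For (ii): following the base case of the proof of Theorem~\ref{main-}, pick $x\in P$ joined by unique geodesic segments to all vertices (a generic point; found in polynomial time), compute the cut locus $C(x)$ and a longest (or any) leaf-to-leaf path $\rho$ in it — $C(x)$ has $O(n)$ combinatorial complexity and is computable in, e.g., $O(n^2)$ time via the star unfolding and Lemma~\ref{*unfCL}. The subtrees $T_i$ hanging off $\rho$ are $O(n)$ in total size; we excise one digon per subtree, for $O(n)$ digon removals, each costing $O(n)$ to trace its bounding geodesics across previously-sealed digons (exactly the cost analysis of Algorithm~3), so $O(n^2)$ to reach $P_\textrm{flat}$. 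Then $P_\textrm{flat}$ and $Q^s_\textrm{flat}$ are both doubly-covered polygons with $O(n)$ vertices, and tailoring one flat polygon to another is again a sequence of $O(n)$ digon excisions at $O(n)$ cost each — this is precisely the pyramid/flat-case machinery of Lemma~\ref{pyramid} and Algorithm~3 specialized to the flat setting (a doubly-covered polygon is the degenerate pyramid) — so $O(n^2)$. Hence phase (ii) is $O(n^2)$.

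For (iii): we reverse the (i) steps on the flat remnant, one per merging, each an "insert and seal a surface patch" operation that is the inverse of a vertex-cut and costs $O(n)$ combinatorially plus at most $O(n^2)$ to locate where the patch attaches (tracing geodesics as before); with $O(n)$ steps this is $O(n^3)$. Summing: $O(n^3)+O(n^2)+O(n^3)=O(n^3)$, which is within the claimed $O(n^4)$ — so the theorem follows, and we may simply assert $O(n^4)$ as a safe bound that also matches Theorem~\ref{SliceAlgorithm}. The main obstacle I anticipate is bookkeeping the geometric primitives precisely: specifically, bounding the cost of a single digon excision when its bounding geodesics may cross all previously-created seals, and confirming that tracing a geodesic across $k$ seal segments is $O(k)=O(n)$ rather than worse; this is handled exactly as in the Algorithm~3 analysis, where it is observed that one only needs the intrinsic surface, never the extrinsic $3$D shape guaranteed by AGT, so no (infeasible) Alexandrov reconstruction is required. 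A secondary point to state cleanly is that the scaling of $Q_\textrm{flat}$ to fit inside $P_\textrm{flat}$ (comparing diameters/inradii of two planar convex polygons) is trivially polynomial.
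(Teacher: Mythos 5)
Your three-phase structure (vertex-merge $Q$ down to $Q_\textrm{flat}$; reduce $P$ to $P_\textrm{flat}$ and tailor; reverse the merges) is exactly the paper's proof, and most of your per-phase accounting agrees — in particular the use of an intrinsic shortest-path algorithm (Chen--Han, $O(n^2)$ per query) because the $3$D embedding is unavailable after the first merge, and the observation that digon tracing across already-sealed seams costs $O(n)$.

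However, there is a genuine gap at the one place you wave your hands: the choice of the point $x \in P$ with a unique geodesic segment to each vertex. You write ``(a generic point; found in polynomial time)'' without saying how or at what cost, and then report phase~(ii) as $O(n^2)$ and the whole algorithm as $O(n^3)$. The paper identifies precisely this step as the bottleneck: the only way it knows to \emph{guarantee} a ridge-free (generic) $x$ deterministically is to compute all ridge-free regions on $P$, which per Agarwal--Aronov--O'Rourke--Schevon costs $O(n^4)$ time, and this is what forces the stated $O(n^4)$ bound. So as written your argument does not in fact establish even the claimed $O(n^4)$ — it asserts a polynomial bound for the crucial step without naming it — and your $O(n^3)$ conclusion is unjustified. (The paper does remark, as an aside and an open problem, that a random $x$ is generic with probability~$1$ and would give $O(n^3)$, but that is a randomized algorithm, not what Theorem~\ref{thmAlgorithmNoSculpt} claims.) To close the gap you need to either (a) cite the $O(n^4)$ ridge-free-region computation and accept that it dominates, which reproduces the paper's bound, or (b) supply a faster certified method for finding a generic $x$, which would be a genuine improvement and is explicitly left open.
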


\begin{proof}
The first step is to repeatedly apply vertex-merging to $Q$ until it is
reduced to $Q_\textrm{flat}$, when $|Q_\textrm{flat}| \in \{3,4\}$.
Identifying two vertices $v_i$ and $v_j$ such that $\o_i+\o_j < 2 \pi$
can be achieved in $O(n \log n)$ time just by sorting the curvatures
$\o_i$ and selecting the two smallest. 
From the initial sorting onward, only $O(\log n)$ would be needed to update
the list, but we'll see this efficiency is not necessary.

With $v_i$ and $v_j$ selected, the shortest path $\gamma$ between
them needs to be computed. Although there is a very complicated
optimal $O(n \log n)$ algorithm for computing shortest paths on a convex
polyhedron~\cite{SchreiberSharir}, that algorithm exploits the three-dimensional
structure of the polyhedron, which will not be available to us after the 
first vertex-merge. Recall that Alexandrov's gluing theorem guarantees that
the result of a vertex merge is a convex polyhedron, there as yet is no effective
procedure to construct the polyhedron.
However, we know the intrinsic structure of the polyhedron: its vertices, their
curvatures, a triangulation.
The algorithm of Chen and Han~\cite{ChenHan1,ChenHan2} can compute
shortest paths from this intrinsic data in $O(n^2)$ time.
Repeating this $n$ times to reach $Q_\textrm{flat}$ then
can be achieved in $O(n^3)$ time.

The next step is to tailor $P$ to $P_\textrm{flat}$ using the cut locus $C(x)$ from a ``generic point" $x$, i.e., 
one with a unique shortest path to each vertex of $P$.
As described in the proof of Theorem~\ref{main-} and Fig.~\ref{fig5}, from the star unfolding $\bar{P}_x$ of $P$ with respect to $x$, 
and $C(x) \subset \bar{P}_x$, the remaining steps to reach $P_\textrm{flat}$ can be achieved in linear time.

The star unfolding $\bar{P}_x$ can be computed in $O(n \log n)$ time using the complex Schreiber-Sharir algorithm~\cite{SchreiberSharir}, 
or in $O(n^2)$ time with the Chen-Han algorithm~\cite{ChenHan1,ChenHan2}.
$\bar{P}_x$ only needs to be computed once.
However, we know of no way to find a generic $x$ short of computing all the ``ridge-free'' regions on $P$, which takes $O(n^4)$ time~\cite{aaos-supa-97}.
See Open Problem~(3).

Reversing the $Q$ vertex-merging steps as tailorings cutting of the merged vertices on $P$, could easily be accomplished in $O(n \log n)$ time.

So the whole algorithm time-complexity is dominated by the $O(n^4)$ cost of finding a guaranteed generic $x$.

Keeping track of the considered vertices and employed digons gives in the end a correspondence between $Q^T$ and $Q$, and thus the 3D structure of $Q^T$.
\end{proof}

\noindent Because the ridge-free regions are determined by overlaying $n$ cut loci, the regions are delimited by a one-dimensional network of segments. 
Thus choosing a random point $x$ on $P$ is generic with probability $1$.
That still leaves the algorithm requiring $O(n^3)$ time.
We have little doubt this time complexity could be improved, perhaps to $O(n^2)$. 
We are currently exploring ways to reduce the complexity of the algorithm.



\section{Crests}
\label{secCrests}

In this section, we derive a method for identifying a crest that does not rely on digon removals, but rather works directly on a pyramid.
This allows us to achieve in Theorem~\ref{thmCrestTailoring} reshaping of $P$ to $Q$ by the removal of crests to flatten pyramids.
We call this process \emph{crest-tailoring}, in contrast to the \emph{digon-tailoring} explored in Sections~\ref{Tailor_1}-\ref{Tailor_2}.
We first illustrate the process of identifying a crest on a pyramid before proving that it always works.

\subsection{Examples}
As before, let $P = L \cup X$ be a pyramid with base $X$ and lateral sides $L$, with $\partial X = \partial L = (x_1, x_2, \ldots, x_k)$, for vertices $x_i$.
The apex is $v$, which projects orthogonally to $\bar{v}$.

Recall from Section~\ref{tas} that a \emph{crest} is a subset of $L$ enclosing $v$ whose removal and suitable suturing via AGT will reduce $L$ to $X$. 
We will describe the procedure for identifying a crest first for $\bar{v} \in X$ and then for $\bar{v} \not\in X$. Although the cases initially feel different,
the proofs will show that they are nearly the same.

Fig.~\ref{PyHexV_1} illustrates $\bar{v} \in X$.
Let $T_i = x_i x_{i+1} v$ and $\bar{T}_i=x_i x_{i+1} \bar{v}$.
We proved in Lemma~\ref{lemAngles} that the angle $\bar{\q}_i$ at $x_i$ in $X$ is strictly smaller than than the angle $\q_i$ on $L$, 
the sum of the angles in $T_{i-1}$ and $T_i$ incident to $x_i$ (as long as $|v \bar{v}| > 0$).
\begin{figure}
\centering
 \includegraphics[width=1.0\textwidth]{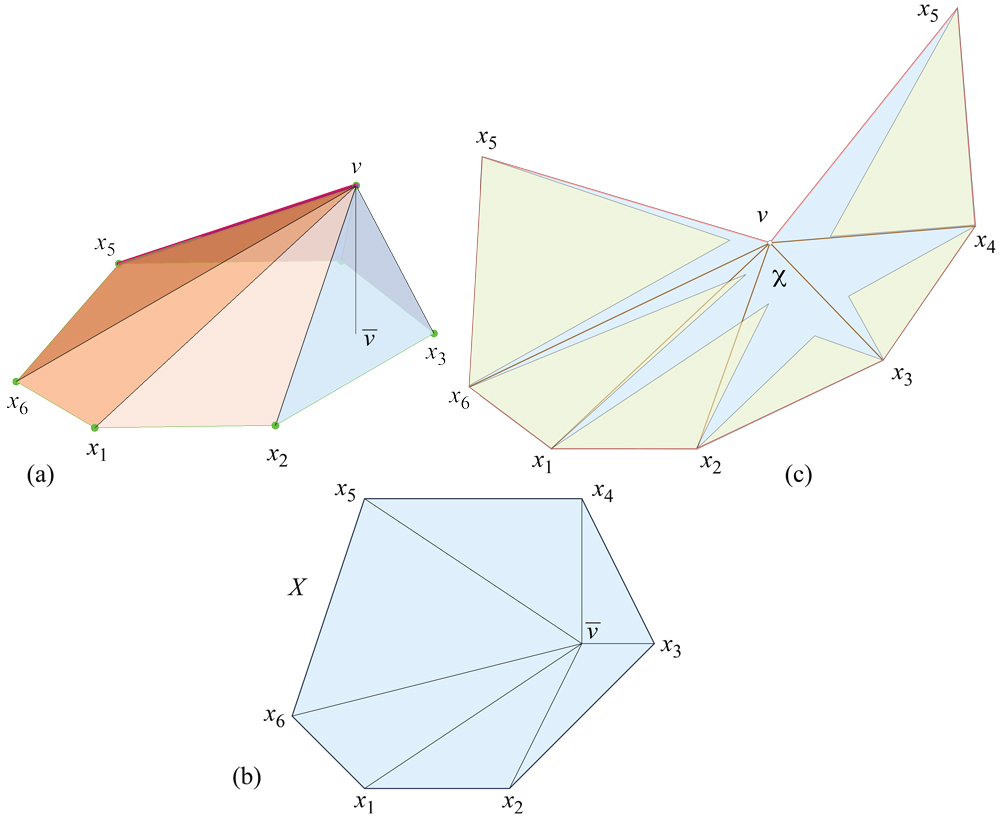}
\caption{(a)~Pyramid.
(b)~$X$ with projected triangles $\bar{T}_i=x_i x_{i+1} \bar{v}$.
(c)~Flattening of $L$ to $\bar{L}$, in this case by cutting the edge $x_5 v$.
The lifted triangles $T^L_i$ are shown yellow. The crest $\chi$ is blue.
}
\label{PyHexV_1}
\end{figure}

A key definition is the \emph{lift} of $\bar{T}_i$ onto $L$.
Let $\bar{\a}_i$ and $\bar{\b}_i$ be the base angles of $\bar{T}_i$, at $x_i$ and $x_{i+1}$ respectively.
On $L$, extend geodesic $\g_i$ from $x_i$ at angle $\bar{\a}_i$, and extend geodesic $\g_{i+1}$ from $x_{i+1}$ at angle $\bar{\b}_i$.
Let $v_i$ be the point on $L$ at which these geodesics meet; $v_i$ is the image of $\bar{v}$.
(We will not establish that indeed these geodesics meet on $L$ until Lemma~\ref{lemLifting}.)
Then $\operatorname{lift}(\bar{T}_i) = T^L_i$ is a geodesic triangle on $L$ isometric to $\bar{T}_i$.
Another way to view the lift of $\bar{T}_i$ is to imagine $\bar{T}_i$ rotating about $x_i x_{i+1}$ by the dihedral angle there and pasting it on the inside of $L$.

Yet another way to view the lift is as follows.
$L$ is isometric to a cone and can be flattened by cutting along a generator, i.e., a segment from $v$ to $\partial L$.
Let $\bar{L}$ be a particular flattening, with the cut generator not ``near" $x_i$ just for simplicity.
Then place a copy of $\bar{T}_i$ on $\bar{L}$ matching $x_i x_{i+1}$.
Then refold $\bar{L}$ to $L$.
We will continue to reason with a flattened $\bar{L}$ but remembering
that $\bar{L}$ is a representation of $L$, and so the cut edge is not relevant.

This last layout-viewpoint yields a method to construct a full crest, call it $\chi$.
The base $X$, partitioned into $\bar{T}_i$, can be modified by opening the angle at $x_i$ from $\bar{\q}_i$ to $\q_i$. 
After opening at all $x_i$, this figure can be superimposed on the flattened $\bar{L}$, matching the boundaries $x_1, \ldots, x_k$.
This is illustrated in Fig.~\ref{PyHexV_2}(c).
The crest is then the portions of $L$ not covered by the lifted $\bar{T}_i$.
It should be clear that cutting out $\chi$ and suturing closed the matching edges will reduce $L$ to $X$,
for the $\bar{T}_i$ remaining after removing $\chi$ exactly partition $X$.

We next illustrate the case when $\bar{v} \not\in X$;
see Fig.~\ref{PyHexV_2}.
We perform the exact same process of lifting triangles $\bar{T}_i$
to $L$, but we clip those triangles to $X$---i.e.,
form the polygon $\bar{T}_i \cap X$---as indicated in~(c) of the figure.
Notice that two triangles, $\bar{T}_3$ and $\bar{T}_4$, are removed by
the clipping intersection.
Again it should be clear that cutting out $\chi$ and suturing closed 
will reduce $L$ to $X$.
\begin{figure}
\centering
 \includegraphics[width=1.0\textwidth]{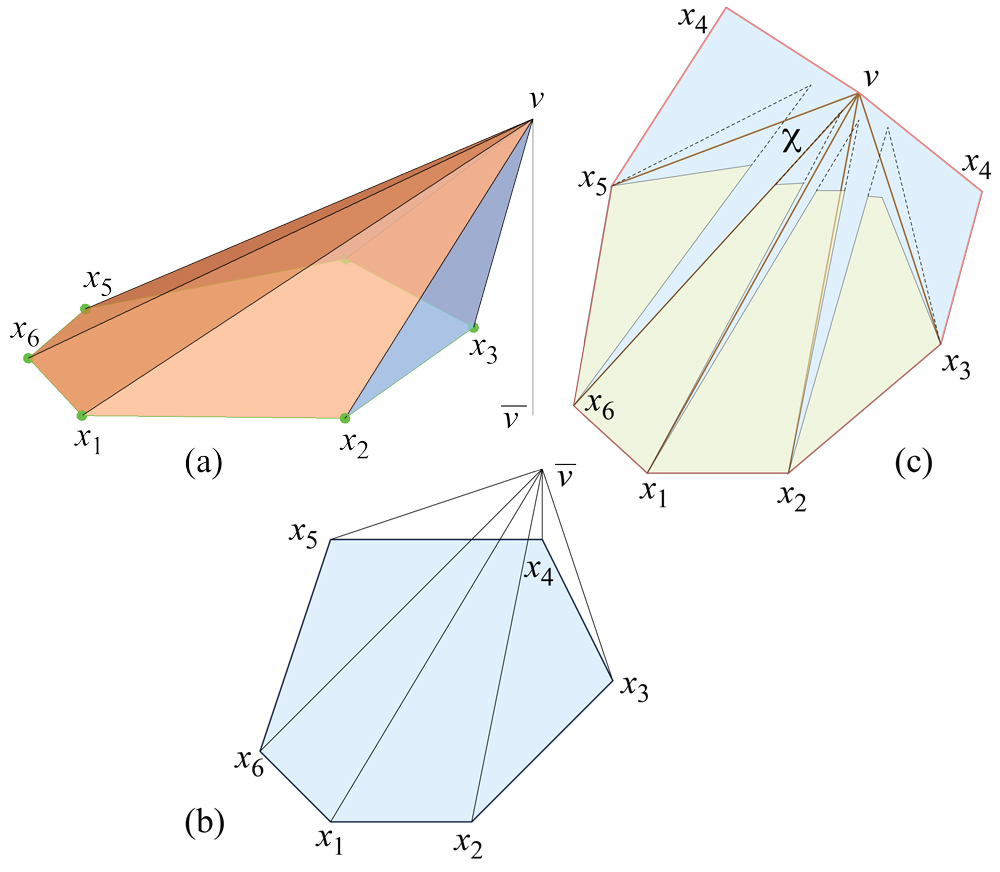}
\caption{Following the same conventions as in  Fig.~\protect\ref{PyHexV_1}:
(a)~Pyramid.
(b)~$X$; $\bar{v} \not\in X$.
(c)~$\bar{L}$, yellow clipped lifted triangles $T^L_i$, and blue crest $\chi$.}
\label{PyHexV_2}
\end{figure}


\subsection{Proofs}

We will need several geometric properties.

Let $\bar{\q}_i$ be the angle at $x_i$ in $X$, and  $\q_i$ the angle at $x_i$ on $L$, the sum of two triangle angles incident to $x_i$.
Then, by Lemma \ref{lemAngles}, $\q_i > \bar{\q}_i$.

\begin{lm}
\label{lemAltitude}
Let $T=a b c$ be a triangle in $\mathbb{R}^3$, with $a b$ on plane $\Pi$ and $c$ above that plane.
Let $\bar{c}$ be the orthogonal projection of $c$ onto $\Pi$, and $\bar{T}= a b \bar{c}$ the projected triangle.
Finally, let $T^r = a b c^r$ be the triangle $T$ flattened to $\Pi$ by rotating about $ab$.
Then $c^r$ and $\bar{c}$ lie on the altitude line perpendicular to the line containing $ab$.
\end{lm}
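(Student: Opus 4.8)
The plan is to funnel the whole statement through one auxiliary plane. Let $\ell$ be the line containing $ab$, and let $f$ be the foot of the perpendicular dropped from $c$ to $\ell$, so that $cf\perp\ell$. Let $\Sigma$ be the plane through $c$ orthogonal to $\ell$; then $f\in\Sigma\cap\ell$, and $\Sigma$ is the \emph{unique} plane through $c$ perpendicular to $\ell$. The claim will follow once we show that both $\bar c$ and $c^r$ lie on the line $m:=\Sigma\cap\Pi$, which passes through $f$ and is perpendicular to $\ell$ inside $\Pi$.

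First I would place $\bar c$. Since $c\bar c$ is orthogonal to $\Pi$ and $\ell\subset\Pi$, we get $c\bar c\perp\ell$; combined with $cf\perp\ell$, the plane spanned by $c$, $f$, $\bar c$ is orthogonal to $\ell$, hence equals $\Sigma$. As $\bar c\in\Pi$ as well, we conclude $\bar c\in\Sigma\cap\Pi=m$. Next I would place $c^r$. Flattening $T=abc$ onto $\Pi$ by rotating about the axis $\ell$ fixes $\ell$ pointwise, preserves the distance $|cf|$ from $c$ to $\ell$, and moves $c$ within the unique plane through $c$ perpendicular to $\ell$ — that is, within $\Sigma$ — along the circle of center $f$ and radius $|cf|$. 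Its image $c^r$ lies on $\Pi$, so again $c^r\in\Sigma\cap\Pi=m$. Therefore $\bar c$ and $c^r$ both lie on $m$, the line through $f$ perpendicular to $\ell$; and since $f$ is the common foot of the altitudes from $\bar c$ in $\bar T=ab\bar c$ and from $c^r$ in $T^r=abc^r$, this $m$ is exactly the asserted altitude line. (If $f$ happens to fall outside the segment $ab$, nothing changes, since the statement refers only to the line containing $ab$.)

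The one point requiring a little care — more bookkeeping than genuine obstacle — is making the rotation precise: one must record that rotation about $\ell$ fixes $\ell$, preserves $\mathrm{dist}(c,\ell)=|cf|$, and confines $c$ to the plane through $c$ orthogonal to $\ell$, and then identify that plane with $\Sigma$. Once that identification is in place, the two constructions meet on $m$ and the proof is complete; everything else is elementary Euclidean geometry.
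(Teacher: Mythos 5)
Your proof is correct and is essentially an expanded version of what the paper does: the paper simply cites the Theorem of the Three Perpendiculars (which is exactly your observation that the plane through $c$, $f$, $\bar c$ is orthogonal to $\ell$, so $\bar c f\perp\ell$) and notes $T^r\cong T$, while you unpack both of those facts via the auxiliary plane $\Sigma$ and make the rotation argument explicit. Same idea, more detail.
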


\begin{proof}
See Fig.~\ref{AltitudeLemma}.
The claim follows from the Theorem of the Three Perpendiculars.
Note that $T^r$ is congruent to $T$.
\end{proof}
\begin{figure}
\centering
 \includegraphics[width=1.0\textwidth]{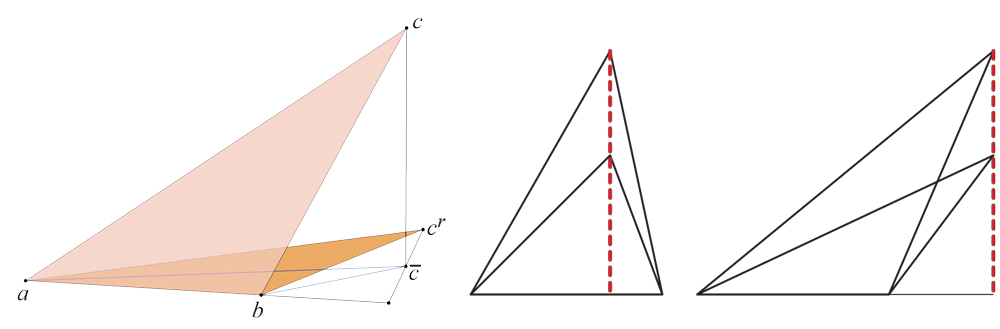}
\caption{
Rotated apex $c^r$ and projected apex $\bar{c}$ lie on same altitude.
}
\label{AltitudeLemma}
\end{figure}
The consequence of Lemma~\ref{lemAltitude} is that, 
superimposing a lifted triangle $\bar{T}^L$ on a planar layout $\bar{L}$ of $L$,
the images of $v$ in $\bar{L}$ and $\bar{v}$ of $\bar{T}$, lie along the altitude of $T$.


The following lemma assumes that $\bar{v} \in X$.
The case when $\bar{v} \not\in X$ will be treated separately.
Let $\bar{L}$ be a planar layout of $L$, say, cut open at edge $x_1 v$.
Let $\t_i= \pi-(\b_{i-1}+\a_i)=\pi - \q_i$ be the turn angle at vertex $x_i$ in the layout.
Because $\bar{v} \in X$, $\t_i > 0$, i.e., the planar image
of $\partial L$ in $\bar{L}$ is a convex chain,
and also convex wrapping around the cut edge $x_1 v$.

Let $a_i$ be the segment altitude of triangle $T_i = x_i v x_{i+1}$ in the layout $\bar{L}$.
Let $\nu = 2\pi-\o(v)$ be the surface angle of $P$ incident to $v$.

\begin{lm}
\label{lemAltTurns}
When  $\bar{v} \in X$ and consequentially $\bar{L}$ is a convex chain $x_1,\ldots,x_k$, the following hold (with $k+1 \equiv 1 \bmod{k}$):
\begin{enumerate}[label=(\alph*)]
\squeezelist
\item The sum of the turn angles, $\sum_i \t_i=\nu$, the surface angle incident to $v$.
\item The angle between $a_i$ and $a_{i+1}$ at $v$ on $L$ is exactly
equal to the turn angle $\t_i$ at vertex $x_i$.
\item The altitudes occur in order around $v$, in the sense that 
$a_{i+1}$ is counterclockwise of $a_i$ around $v$.
\end{enumerate}
\end{lm}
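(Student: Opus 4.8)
The plan is to carry out everything inside the planar layout $\bar L$ of the cone $L$, using the fan of $k$ triangles (the layout copies of $T_1,\dots,T_k$) around the common image $v^{r}$ of the apex, and to extract all three statements from a single bookkeeping of the angles these triangles cut out at $v^{r}$. I would write $\phi_i$ for the angle of $T_i$ at $v^{r}$ — which equals its angle at $v$ on $L$, since $\bar L$ is an isometric unrolling — and keep $\a_i,\b_i$ for the base angles of $T_i$ at $x_i$ and $x_{i+1}$, so that $\a_i+\b_i+\phi_i=\pi$, that $\q_i=\b_{i-1}+\a_i$ is the angle of $L$ at $x_i$, and that $\sum_{i=1}^{k}\phi_i=\nu$ because $L$ is a cone of total angle $\nu=2\pi-\o(v)$ about $v$ (its only faces incident to $v$ are the $T_i$).

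For part (a) this is then purely formal: $\sum_i\t_i=\sum_i(\pi-\q_i)=k\pi-\sum_i(\b_{i-1}+\a_i)=k\pi-\sum_i(\a_i+\b_i)=k\pi-\sum_i(\pi-\phi_i)=\sum_i\phi_i=\nu$, where the only manipulation is the cyclic reindexing $\sum_i\b_{i-1}=\sum_i\b_i$. No convexity is needed here.

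For parts (b) and (c) the key is a local observation about one triangle: in the layout copy of $T_i$, the altitude from $v^{r}$ to the line through $x_ix_{i+1}$ makes, measured along the fan, a (signed) angle $\tfrac{\pi}{2}-\a_i$ with the generator $v^{r}x_i$ and $\tfrac{\pi}{2}-\b_i$ with the generator $v^{r}x_{i+1}$, and these add to $\phi_i$ as they must; Lemma~\ref{lemAltitude} identifies this altitude with the line that also carries the image of $\bar v$, which is the link back to the triangulation of $X$. Reading the fan counterclockwise, between the altitude $a_{i-1}$ of $T_{i-1}$ and the altitude $a_i$ of $T_i$ we sweep $(\tfrac{\pi}{2}-\b_{i-1})$ from $a_{i-1}$ to the shared generator $v^{r}x_i$ and then $(\tfrac{\pi}{2}-\a_i)$ from that generator to $a_i$, for a total of $\pi-(\b_{i-1}+\a_i)=\pi-\q_i=\t_i$; this is claim (b), read as: consecutive altitudes are separated, at $v$, by the turn angle at the vertex their two triangles share. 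Summing these consecutive gaps around one full loop recovers $\sum_i\t_i=\nu$, consistent with (a). Finally, since $\bar v\in X$ every $\t_i>0$ — this is precisely the convexity of the boundary polyline of $\bar L$ noted just before the lemma — so the cumulative angles $0,\t_2,\t_2+\t_3,\dots$ at which $a_1,a_2,\dots,a_k$ sit around $v^{r}$ are strictly increasing with total $\nu-\t_1<\nu<2\pi$, hence the $a_i$ occur in strict counterclockwise cyclic order with no wraparound, which is claim (c).

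The one delicate point, and the main obstacle, is that $\a_i$ or $\b_i$ may be obtuse, so the altitude foot falls outside the segment $x_ix_{i+1}$ and the altitude ray points outside $T_i$'s wedge at $v^{r}$. I would deal with this by treating $\tfrac{\pi}{2}-\a_i$ and $\tfrac{\pi}{2}-\b_i$ as signed directed angles along the fan from the start, and checking that both the additivity $(\tfrac{\pi}{2}-\a_i)+(\tfrac{\pi}{2}-\b_i)=\phi_i$ and the telescoping used for (b) and (c) are insensitive to these signs; once this convention is fixed, the positivity $\t_i>0$ still delivers the genuine unsigned counterclockwise ordering of the altitudes. Everything else is the short angle-chase above.
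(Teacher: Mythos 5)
Your proof is correct and reaches all three conclusions by essentially the same geometric ideas as the paper, but your handling of part~(a) is cleaner and a little different. The paper proves~(a) by treating the planar layout $\bar{L}$ as a simple polygon whose turn angles sum to $2\pi$, which forces it to account separately for the anomalous turns at the two images of the cut generator $x_1 v$ (getting $\t'_1=\pi+\t_1$) and at $v$ itself (turn $\pi-\nu$); your version avoids this bookkeeping entirely by the telescoping identity
\[
\sum_i\t_i=\sum_i(\pi-\q_i)=k\pi-\sum_i(\b_{i-1}+\a_i)=k\pi-\sum_i(\pi-\phi_i)=\sum_i\phi_i=\nu,
\]
which uses only the triangle angle sums and the cyclic reindexing $\sum_i\b_{i-1}=\sum_i\b_i$, and is insensitive to which generator was slit. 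For~(b) and~(c) you do what the paper does — the gap between consecutive altitude directions at $v$ is $(\tfrac\pi2-\b_{i-1})+(\tfrac\pi2-\a_i)=\t_i$, and positivity of all $\t_i$ (from $\bar v\in X$) plus $\sum_i\t_i=\nu<2\pi$ gives the monotone cyclic ordering — but you are more careful than the paper about the case where a base angle is obtuse and the altitude foot leaves the base segment, noting that the argument goes through with signed angles. That extra care is warranted and is glossed over in the paper's ``extend $T_i$ to a right triangle'' picture. One small indexing remark: as stated, your version says the angle between $a_{i-1}$ and $a_i$ is $\t_i$ (the turn at the shared vertex $x_i$), which is the natural reading; the paper's statement pairs $a_i,a_{i+1}$ with $\t_i$ at $x_i$ even though those two triangles share $x_{i+1}$, so your indexing is actually the consistent one.
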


\begin{proof}
\begin{enumerate}[label=(\alph*)]
\item Viewing the entire layout $\bar{L}$ as a simple polygon, $\sum_i \t_i = 2 \pi$.
But we need to distinguish between $\t_1$ at the edge $x_1 v$ cut to flatten $L$, and $\t'_1$, the turns at the two images of $x_1$ in $\bar{L}$:
$$\t'_1 = 2\pi-(\a_1+\b_k) = \pi+ \t_1 \;.$$ 
The second anomalous turn in $\bar{L}$ is $\pi-\nu$ at $v$. So we have
\begin{eqnarray*}
\t'_1 + \sum_{i=2}^k \t_i + (\pi-\nu) &=& 2 \pi \;, \\
\sum_{i=1}^k \t_i &=& \nu \;.
\end{eqnarray*}

\item This can be seen by extending $T_i=x_i v x_{i+1}$ to a right triangle, with right angle at the foot of altitude $a_i$. 
See Fig.~\ref{AltitudeTurn}.

\item This follows directly from~(b). Note that here we rely on the turns $\t_i$ being positive, i.e., convex.
See Fig.~\ref{AltTurns}.
\end{enumerate}
\end{proof}
%
\begin{figure}
\centering
 \includegraphics[width=0.5\textwidth]{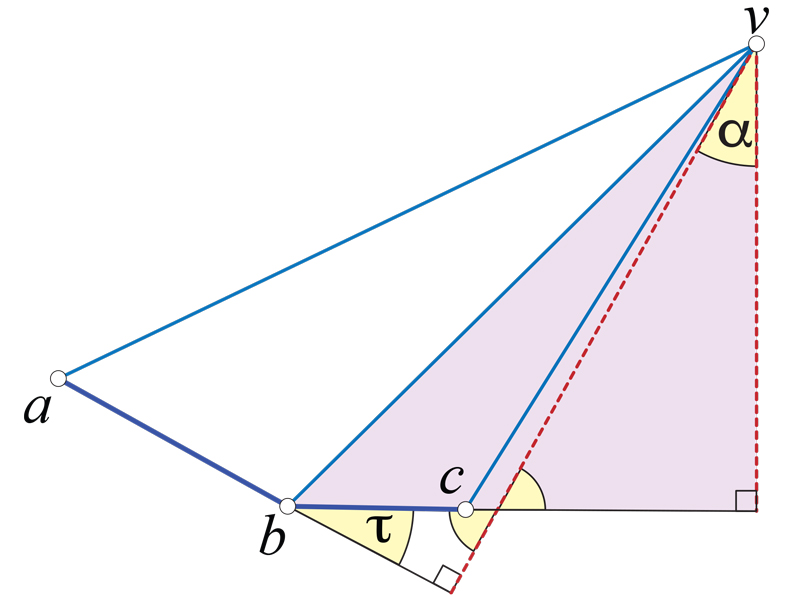}
\caption{
Triangles $avb$ and $bvc$ turn
$\t$ at $b$ of the convex chain $abc$. 
$\t$ is equal to the altitude turn $\a$;
angles of similar right triangles marked.
}
\label{AltitudeTurn}
\end{figure}

\begin{figure}
\centering
 \includegraphics[width=1.0\textwidth]{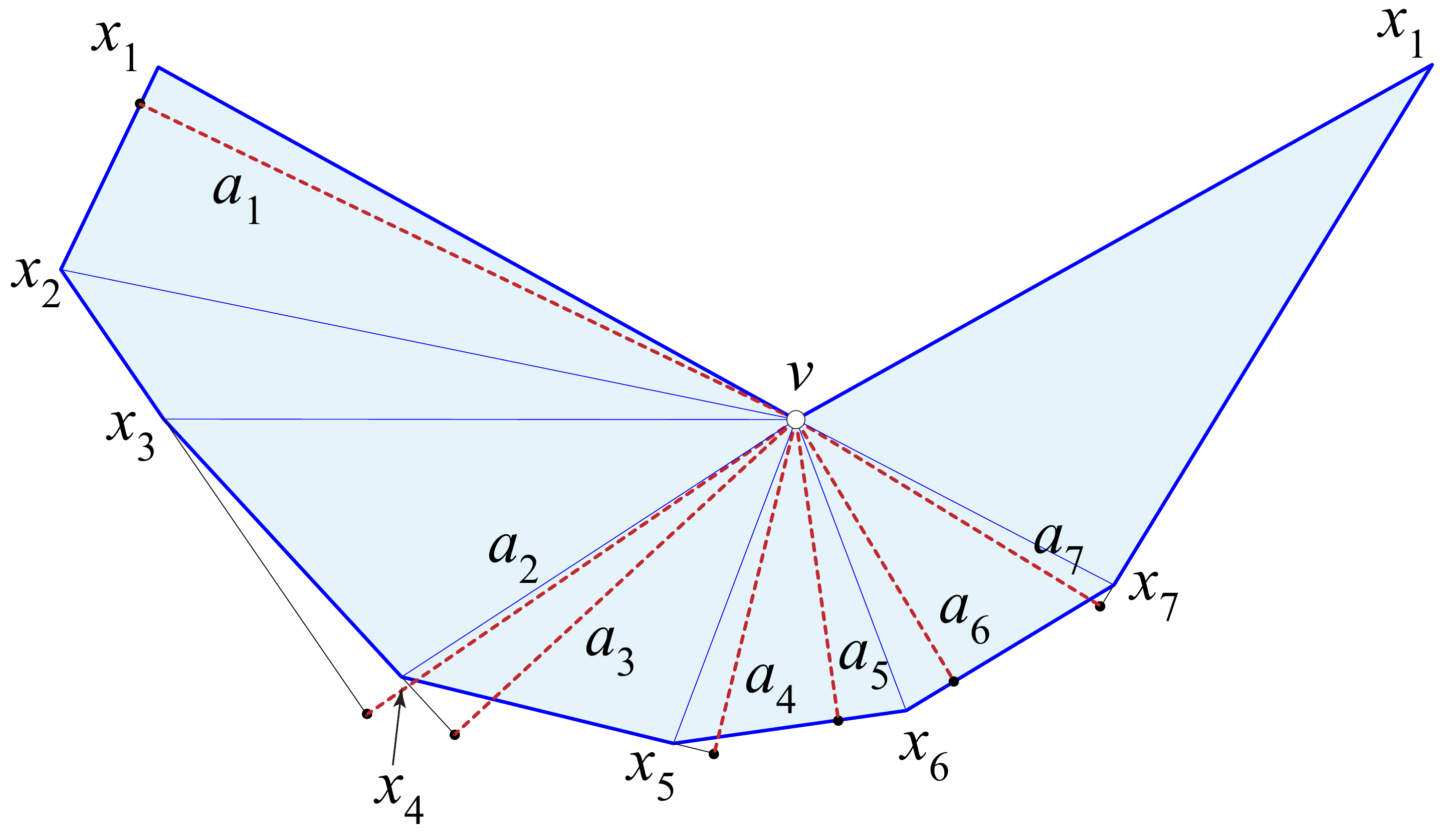}
\caption{
A convex chain $x_1,\ldots,x_7$ and the corresponding altitudes $a_1,\ldots,a_7$.
}
\label{AltTurns}
\end{figure}

The consequence of Lemma~\ref{lemAltTurns} is that the surface angle $\nu$ around $v$
is partitioned by the altitudes $a_i$ in order, because $\sum_i \t_i = \nu$, and
the angle between $a_i$ and $a_{i+1}$ is $\t_i$.
Moreover, Lemma~\ref{lemAltitude} shows that the apexes of each lifted triangle
$\bar{T}^L_i$ lie on those altitudes, at some positive distance from $v$.
Consequently, we can connect those apexes to form a simple geodesic polygon
enclosing $v$ on $L$.
Because every turn angle $\t_i$ is strictly less than $\pi$,
connecting two adjacent apexes along $a_i$ and $a_{i+1}$ 
will keep $v$ to the same (counterclockwise) side. 
Call this polygon the \emph{moat} $M$ of $P$.\footnote{
We do not know whether $M$ is always convex, but we only need it to be simple.}
Fig.~\ref{MoatV_1} illustrates the moat for the 
example in Fig.~\ref{PyHexV_1}(c).
\begin{figure}
\centering
 \includegraphics[width=0.8\textwidth]{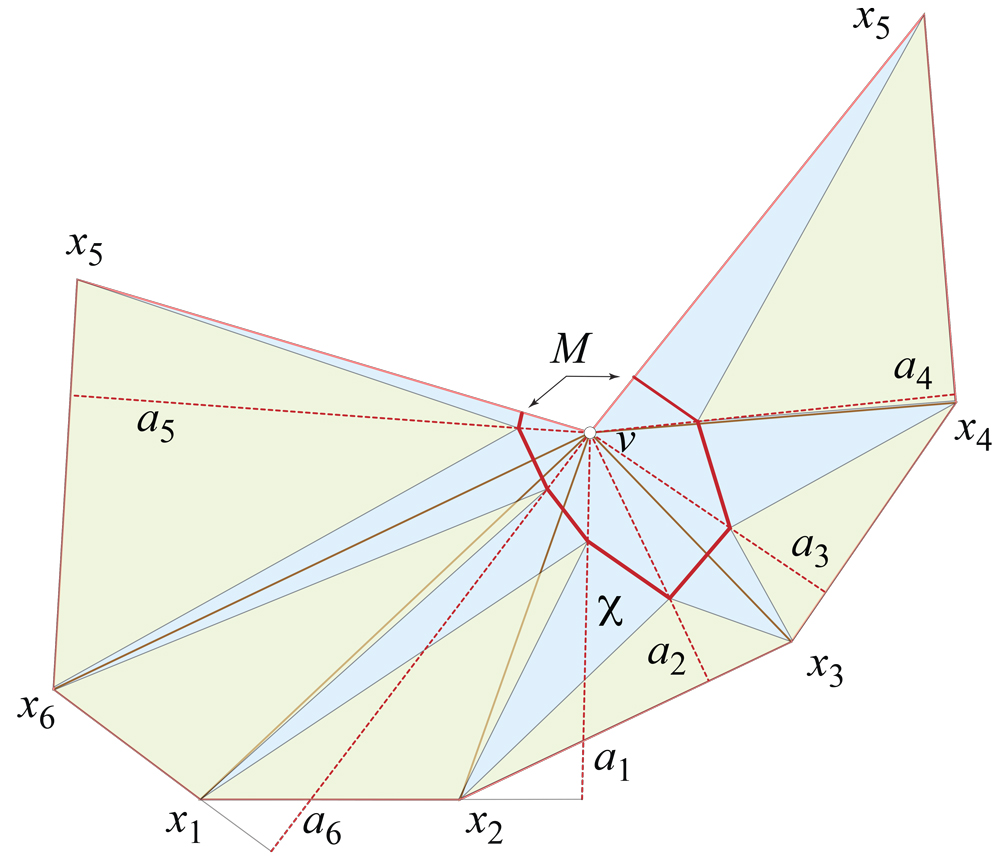}
\caption{The layout from Fig.~\protect\ref{PyHexV_1}(c)
shown with  moat $M$ and altitudes $a_i$ identified.
}
\label{MoatV_1}
\end{figure}

\begin{lm}
\label{lemLifting}
For the case $\bar{v} \in X$, the lifting of all triangles $\bar{T}_i$ to $\bar{T}^L_i$ onto $L$ has the following properties,
(where we shorten ``geodesic triangle" to ``triangle"):
\begin{enumerate}[label=(\alph*)]
\item Each lifted triangle $\bar{T}^L_i$ fits on $L$: $\bar{T}^L_i \subset L$.
\item $v$ does not lie in any triangle $\bar{T}^L_i$.
\item No lifted triangle self-overlaps, and no pair of triangles overlap.
\end{enumerate}
\end{lm}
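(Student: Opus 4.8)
The plan is to exploit the two structural lemmas already established—Lemma~\ref{lemAltitude} (the rotated apex $c^r$ and the projected apex $\bar c$ share the altitude of $T_i$) and Lemma~\ref{lemAltTurns} (the altitudes $a_i$ partition the surface angle $\nu$ around $v$ in cyclic order, with the angle between consecutive $a_i,a_{i+1}$ equal to the turn $\t_i$)—to control the positions of all the lifted apexes $v_i$ simultaneously. The key point is that lifting $\bar T_i$ onto $L$ places its apex image $v_i$ at distance $|a_i|$ from $v$ measured \emph{along} the altitude direction $a_i$, because the lift is an isometric copy of $\bar T_i$ attached along $x_i x_{i+1}$, and by Lemma~\ref{lemAltitude} the apex sits on the altitude of the triangle $T_i = x_i v x_{i+1}$ on $L$. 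So all the data we need—where each $v_i$ lands and in what angular order—is already encoded in the convex chain layout $\bar L$.

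First I would argue (a): fixing $i$, flatten $L$ by cutting along the generator $x_i v$ so that the triangle $T_i$ lies flat and undistorted in the layout; then the lift $\bar T^L_i$ is literally the planar triangle $x_i x_{i+1} v_i$ with $v_i$ on the altitude from $v$, and it sits inside the cone sector bounded by the two generators through $x_i$ and $x_{i+1}$, hence inside $L$. Since the lift operation is intrinsic to $L$ (it only uses the metric, the boundary segment $x_i x_{i+1}$, and the prescribed base angles $\bar\a_i,\bar\b_i \le \bar\q_i < \q_i \le \pi$), this shows $\bar T^L_i \subset L$ and incidentally that the two geodesics $\g_i,\g_{i+1}$ really do meet on $L$, discharging the parenthetical promise made before the statement of Lemma~\ref{lemLifting}. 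For (b), in that same flattened picture $v_i$ lies strictly between $x_i x_{i+1}$ and $v$ along the altitude (the foot of the altitude is interior to the segment $x_i x_{i+1}$ because the angles $\bar\a_i,\bar\b_i$ are both acute—recall $\bar\q_i \le \pi$ and $\bar T_i$ is a genuine base triangle of a pyramid, so its base angles are less than $\pi/2$... actually here I should be careful, since $\bar\q_i$ can exceed $\pi/2$; instead I would observe that $v_i$ lies on the \emph{open} altitude segment strictly below $v$ and strictly above the line $x_i x_{i+1}$, which is enough), so $v$ itself, lying beyond $v_i$ on the altitude ray, is not in the closed triangle $\bar T^L_i$.

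The substantive part is (c), the non-overlap statement, and this is where I expect the main obstacle. The clean route is via the moat $M$: by Lemma~\ref{lemAltTurns}(b,c) the altitudes $a_1,\dots,a_k$ emanate from $v$ in cyclic order and the angular sector between $a_i$ and $a_{i+1}$ has opening exactly $\t_i = \pi - \q_i$; each lifted triangle $\bar T^L_i$ has its two base vertices $x_i, x_{i+1}$ on $\partial L$ and its apex $v_i$ on $a_i$, and—this is the claim to nail down—$\bar T^L_i$ lies entirely within the ``annular'' region between $\partial L$ and the moat polygon $M$, in fact within the single cell of that annulus cut out by the two altitudes $a_i$ and $a_{i+1}$ (equivalently, between the generators through $x_i$ and $x_{i+1}$ and ``below'' the moat edge $v_i v_{i+1}$). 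Since consecutive cells share only a boundary altitude and non-consecutive cells are disjoint, non-overlap of distinct triangles follows; self-non-overlap follows because each $\bar T^L_i$ is an isometric copy of a genuine (nondegenerate) planar triangle and the lift is injective on it. To prove the containment-in-one-cell claim I would work in the flattened layout $\bar L$, use that the turn angles $\t_i$ are all in $(0,\pi)$ so the chain $\partial L$ is convex and the altitudes fan out monotonically, and use Lemma~\ref{lemAltitude} to pin $v_i$ on $a_i$; the only delicate issue is confirming that the far edges $v_i x_{i+1}$ and $v_{i+1} x_{i+1}$ of two adjacent triangles don't cross before reaching the moat, which again reduces to the monotone-fanning of altitudes and the fact that the foot of each altitude is interior to its base segment. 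I would present this as: flatten $L$; check each triangle lies in its sector; invoke the cyclic monotonicity of altitudes from Lemma~\ref{lemAltTurns} to conclude the sectors are interior-disjoint.
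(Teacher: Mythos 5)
Your approach to part~(c) is genuinely different from the paper's, and it has a gap. The paper proves non-overlap via Cauchy's Arm Lemma: if $p_i\in\bar T^L_i$ and $p_j\in\bar T^L_j$ coincided on $L$, then the convex chain in $X$ running from $p_i$ out to $\partial X$, around $\partial X$, and back in to $p_j$, once its vertices $x_m$ are opened by the positive angles $\theta_m-\bar\theta_m$ in the layout, would force the chord $|p_ip_j|$ to strictly increase---contradicting coincidence. You instead try to pack each $\bar T^L_i$ into a single cell of a fan decomposition of $L$; but the packing claim is false. First, the two cell structures you propose are not equivalent: the altitude $a_i$ runs through the \emph{interior} of the sector $T_i=x_ivx_{i+1}$ and terminates at the apex $v_i$, so $\bar T^L_i$ straddles $a_i$ rather than lying on one side of it---the ``cell cut out by $a_i$ and $a_{i+1}$'' is not the same region as ``between the generators through $x_i$ and $x_{i+1}$.'' Second, and more fundamentally, the containment $\bar T^L_i\subset T_i$ invoked in your (a) and relied on in (c), and the claim ``the foot of each altitude is interior to its base segment,'' both fail in general, even for $\bar v$ in the interior of $X$. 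Since $v$ projects orthogonally to $\bar v$, the altitude foot $f$ on the line $x_ix_{i+1}$ is shared by $T_i$ and $\bar T_i$, and $|x_iv|\cos\alpha_i=|x_i\bar v|\cos\bar\alpha_i$; hence when the base angle $\bar\alpha_i$ of $\bar T_i$ is obtuse, $f$ lies beyond $x_i$ outside the segment, and in fact $\alpha_i<\bar\alpha_i$, so the geodesic $\gamma_i$ leaves $x_i$ at angle $\bar\alpha_i>\alpha_i$, crosses the generator $vx_i$, and $v_i\notin T_i$. This already occurs with, say, $X$ a regular pentagon (interior angle $108^\circ$) and $\bar v$ near one side.

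So the sectors $T_i$ are not disjoint cells containing the lifted triangles, and the ``delicate issue'' you defer to monotone fanning plus interior feet is exactly the point that breaks. The paper's Cauchy-arm argument is global and needs no such local sector containment, which is why it sidesteps the obtuse-base-angle obstruction entirely; for (a) the paper likewise avoids the sector claim and argues only that $v_i$ lies on the moat $M$ surrounding $v$, so $\bar T^L_i$ stays on the outer side of $M$. If you wish to salvage a flattening picture for (a), you must at least cut along a generator away from $T_{i-1}\cup T_i\cup T_{i+1}$ so any spill-over of $\bar T^L_i$ into a neighboring sector remains visible in the flat layout, and for (c) you would still need a separate argument for why spilled-over pieces cannot collide with $\bar T^L_{i\pm1}$.
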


\begin{proof}
\begin{enumerate}[label=(\alph*)]
\item
Because the apex of the lifted $\bar{T}^L_i$ is on the moat $M$ which surrounds $v$, $\bar{T}^L_i$ remains on the portion of $L$ outside the moat.
\item Therefore no $\bar{T}^L_i$ includes $v$.
\item If we view the overlay of $\bar{L}$ with the opening of $\partial X$ by the angle $\q_i-\bar{\q}_i$ at each $x_i$ image, as in Fig.~\ref{PyHexV_1}(c),
then Cauchy's Arm Lemma shows that two lifted triangles cannot overlap.
Suppose $\bar{T}^L_i$ and $\bar{T}^L_j$ overlap, $i < j$.
Then we can identify two points $p_i \in \bar{T}^L_i$ and $p_j \in \bar{T}^L_y$ that coincide in the layout.
But in $X$, $p_i$ and $p_j$ were separated by a positive distance $d=|p_i p_j|$.
In $X$, draw a convex chain from $p_i$ to $\partial X$, around that boundary, to $p_j$.
The layout opens this chain by the positive angles $\q_i-\bar{\q}_i$, and so in the layout, $p_i$ and $p_j$ must be separated further than $d$, a contradiction.
\end{enumerate}
\end{proof}

\noindent
Lemma~\ref{lemLifting} shows that $\chi$, the region of $L$ not covered by the lifted triangles,
is indeed a crest.

\medskip

We now turn to the case $\bar{v} \not\in X$.
The difficulty here is that $\partial L$ in a layout $\bar{L}$ of $L$ may not be a convex chain,
and Lemma~\ref{lemAltTurns} relies on convexity for the altitudes to connect to $v$
in the same order as the vertices around $X$.
Indeed if $v$ were closer the plane of $X$ in the example in Fig.~\ref{PyHexV_2}(a),
then the angle at $x_3$ would be reflex. In general, a contiguous portion of $\partial L$
could be reflex. Lifting triangles incident to that reflex chain could lead to overlap,
violating~(c) of Lemma~\ref{lemLifting}.

However, as described earlier in Fig.~\ref{PyHexV_2}(c), the crest is formed by clipping the
triangles $\bar{T}_i$ to $X$.
Triangles $\bar{T}_3=x_3 \bar{v} x_4$ and $\bar{T}_4=x_4 \bar{v} x_5$ in Fig.~\ref{PyHexV_2}(b)
fall entirely outside $X$, and so play no role.
The convex portion of $\partial L$ still satisfies Lemma~\ref{lemAltTurns},
so the corresponding altitudes are incident to $v$ in the same order as the vertices
along the convex chain.
This allows us to define a partial moat $M$, and then close it off to a simple polygon by a geodesic  path surrounding $v$.
This is illustrated in Fig.~\ref{MoatV_2}.

\begin{figure}
\centering
 \includegraphics[width=0.6\textwidth]{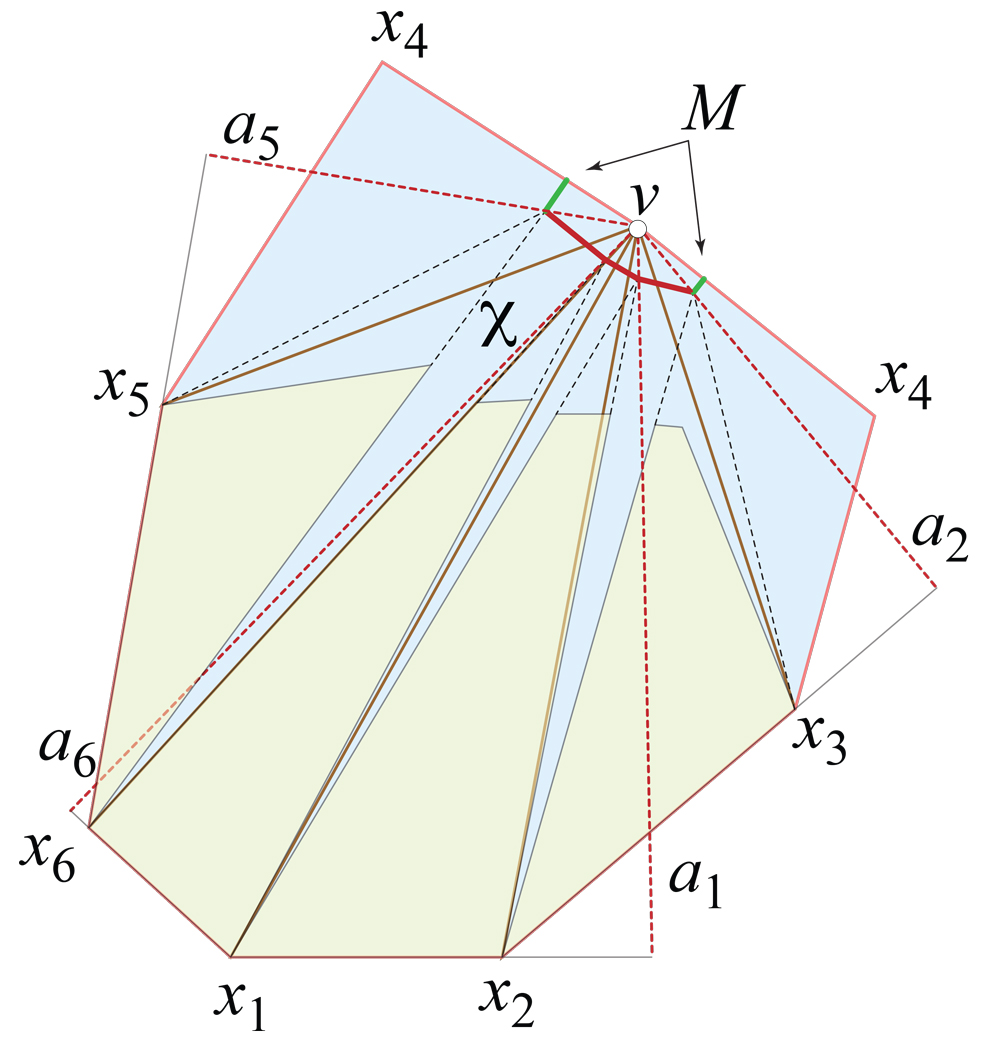}
\caption{The layout from Fig.~\protect\ref{PyHexV_2}(c)
shown with moat $M$ and altitudes $a_i$ marked.
Note $a_3$ and $a_4$ are missing because $T_3$ and $T_4$ are clipped
as outside $X$.
The green edges mark the closing of the partial moat around $v$.
}
\label{MoatV_2}
\end{figure}

This renders Lemma~\ref{lemLifting} true for the lifted triangles along the convex chain of $\partial L$, which are the only ones not clipped entirely away.
We summarize in a theorem:

\begin{thm}
\label{thmCrest}
A crest $\chi$ can be constructed as the portion of $L$ not covered by lifted triangles in the case of $\bar{v} \in X$, 
and clipped lifted triangles in the case $\bar{v} \not\in X$, as described above.
\end{thm}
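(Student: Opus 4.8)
The plan is to assemble the two cases ($\bar v \in X$ and $\bar v \not\in X$) into one statement by observing that the clipping operation in the second case reduces it to an instance of the first case, restricted to the convex subchain of $\partial L$. First I would recall the machinery already in place: Lemma~\ref{lemAltitude} locates the apex of each lifted triangle $\bar T^L_i$ on the altitude $a_i$ of $T_i$ in a planar layout $\bar L$; Lemma~\ref{lemAltTurns} (for the convex case) shows the altitudes $a_i$ partition the surface angle $\nu$ at $v$ in cyclic order, with the angle between consecutive altitudes equal to the turn angle $\t_i$; and Lemma~\ref{lemLifting} shows the lifted triangles fit on $L$, avoid $v$, and do not overlap. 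Given these, the moat $M$ — the simple geodesic polygon through the lifted apexes, closed off around $v$ using that every $\t_i<\pi$ — separates $v$ from all the lifted triangles, so $\chi := L \setminus \bigcup_i \bar T^L_i$ is a well-defined region of $L$ enclosing $v$.

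The key verification is that $\chi$ is genuinely a \emph{crest} in the sense of Section~\ref{tas}: removing $\chi$ from $L$ and suturing the resulting boundary closed yields a surface isometric to $X$. For this I would argue that after excising $\chi$, the pieces of $L$ that remain are exactly the lifted triangles $\bar T^L_i$ (or, in the clipped case, the polygons $\bar T_i \cap X$), each isometric to the corresponding planar triangle $\bar T_i$ (resp.\ $\bar T_i \cap X$). The suturing identifies the free edges of $\chi$ in the layout precisely so that the $\bar T_i$ are reassembled along their shared edges $x_i \bar v$ and $x_{i+1}\bar v$; since those triangles tile $X$ exactly (the base partition $X = \bigcup_i \bar T_i$), the sutured surface is isometric to $X$. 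One must check the angle condition so AGT applies: at each $x_i$ the surface angle after suturing drops from $\q_i$ to $\bar\q_i<\q_i\le$, and $\bar\q_i<\pi$ since $X$ is convex, while $v$ and the moat vertices disappear into the excision, so at most $2\pi$ angle is glued everywhere; hence AGT (and Lemma~\ref{lemLifting}(c), nonoverlap) guarantees a convex polyhedron, namely $X$ doubly covered.

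For the case $\bar v \not\in X$, I would note that a contiguous arc of $\partial L$ may be reflex in the layout, so Lemma~\ref{lemAltTurns} and Lemma~\ref{lemLifting} no longer apply verbatim. The fix, already sketched around Fig.~\ref{PyHexV_2} and Fig.~\ref{MoatV_2}, is that clipping each $\bar T_i$ to $X$ discards entirely those triangles whose apex angle at $x_i$ is reflex (they lie outside $X$), and leaves a convex subchain of $\partial L$ to which Lemma~\ref{lemAltTurns} does apply; the corresponding altitudes attach to $v$ in cyclic order, a partial moat is defined, and it is closed off by a geodesic path around $v$ to make a simple polygon. The clipped lifted triangles still tile $X$, so the same suturing argument goes through. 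I expect the main obstacle to be bookkeeping in the clipped case: showing that the \emph{clipped} lifted polygons $\bar T_i \cap X$, when transported to $L$, still fit disjointly outside the partial moat, and that the closing geodesic path can always be drawn simple and enclosing $v$ — this is where the convexity of the surviving subchain and the monotone ordering of altitudes around $v$ must be invoked carefully, and where the Cauchy-Arm-Lemma nonoverlap argument of Lemma~\ref{lemLifting}(c) has to be re-run on the clipped pieces. Once that is in hand, the theorem follows by simply combining the two cases.
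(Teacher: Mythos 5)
Your proposal follows the paper's own path exactly: it rests on the same three lemmas (the altitude/projection lemma, the altitude--turn-angle ordering lemma, and the lifting lemma), constructs the moat from the altitudes, and handles $\bar v \not\in X$ by clipping the $\bar T_i$ to $X$ and restricting attention to the surviving convex subchain of $\partial L$, exactly as the paper does before stating the theorem as a summary. The one slight imprecision is your characterization of the discarded triangles as ``those whose apex angle at $x_i$ is reflex'' --- the clipping criterion is $\bar T_i \cap X = \emptyset$, and the paper never asserts an exact coincidence between the clipped-away triangles and the reflex vertices of $\partial L$ --- but this does not alter the argument, and the residual bookkeeping you flag (re-running Lemma~\ref{lemLifting}(c) on the clipped pieces and closing the partial moat by a simple geodesic around $v$) is handled with equal brevity in the paper itself.
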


\noindent
We remark that the same procedure will work for other points $w \in X$ within some neighborhood of $\bar{v} \in X$, resulting in different crests.
However, a simple example shows that not every point $w \in X$ will produce a crest.
Consider $X=x_1 x_2 x_3$ an equilateral triangle of centre $o$, and $v$ close to $o$, with $vo$ orthogonal to $X$.
Take $w \in o x_3$ close to $x_3$. Then the isosceles triangle $w x_1 x_2$ is  larger than the isosceles triangle
$v x_1 x_2$, so no congruent copy of the former can fit inside $L$ without encompassing $v$ and so self-overlapping.

We have this as a counterpart to Theorem~\ref{main}:

\begin{thm}
\label{thmCrestTailoring}
For any convex polyhedra $P$ and $Q$, one can crest-tailor $P$ to any homothetic copy of $Q$ inside $P$, in time $O(n^4)$, where $n=\max\{ |P|, |Q| \}$.
\end{thm}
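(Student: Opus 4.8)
The plan is to mirror the structure of the proof of Theorem~\ref{main}, substituting crest removals for the sequence of digon removals at each vertex truncation. First I would recall from Theorem~\ref{general_slice} that any $Q$ homothetic copy placed inside $P$ can be sculpted from $P$ by a finite sequence of plane slices, and that each such slice removes a portion $P'$ which Theorem~\ref{pyra-dome} partitions into g-domes, each of which Theorem~\ref{thmDomePyr} further reduces to a stack of pyramids by plane cuts through base edges. So it suffices to show that a single pyramid $P=L\cup X$ with apex $v$ can be reduced to its base $X$ by one crest-tailoring step, i.e.\ by excising a single crest $\chi\subset L$ enclosing $v$ and suturing the hole closed. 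This is exactly what Theorem~\ref{thmCrest} provides: it constructs $\chi$ as the portion of $L$ not covered by the lifted triangles $\bar T^L_i$ (clipped to $X$ when $\bar v\notin X$), and Lemma~\ref{lemLifting} guarantees these lifted triangles tile an isometric copy of $X$ inside $L$ without overlap and without covering $v$, so cutting out $\chi$ and gluing along the matching boundary edges of the $\bar T_i$ reassembles precisely $X$. By AGT the resulting surface is a convex polyhedron, and it is isometric to $X$ doubly covered, so the pyramid has been flattened onto its base, which is exactly the vertex-truncation effect of the slice plane. Iterating over all g-dome pyramids and all sculpting slices yields $Q$, proving the first claim; shrinking $Q$ by a dilation to fit inside $P$ handles the case $Q\not\subset P$.

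For the complexity bound, I would reuse the three-part analysis of Section~\ref{secTailor_1} verbatim for Algorithms~1 and~2: the sculpting produces $O(n)$ slices, each partitioned (under the assumption retained from that section) into g-domes totalling $O(n)$ vertices, and each g-dome of $O(n)$ vertices is partitioned by Theorem~\ref{pyra-dome} into $O(n)$ pyramids of size $O(n)$ together with $O(n^2)$ pyramids of constant size. The only change is in the innermost step: instead of Algorithm~3's $O(n)$ digon removals at cost $O(n)$ each, a single pyramid of $O(n)$ vertices is flattened by constructing one crest. Constructing the crest requires flattening $L$ to a layout $\bar L$ (cutting one generator, $O(n)$ work), computing the $k=O(n)$ base angles $\bar\a_i,\bar\b_i$ of the projected triangles $\bar T_i$, tracing the $O(n)$ pairs of geodesics $\g_i,\g_{i+1}$ to locate the apexes $v_i$ on the altitudes $a_i$ (each trace crossing $O(n)$ faces of the layout), assembling the moat $M$, and—when $\bar v\notin X$—performing $O(n)$ triangle-versus-$X$ clipping operations. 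Each of these is $O(n)$ per pyramid vertex, giving $O(n^2)$ per pyramid, the same bound as digon-tailoring. Multiplying, $O(n)$ g-domes $\times\,O(n)$ pyramids $\times\,O(n^2)$ per pyramid $=O(n^4)$.

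The main obstacle I anticipate is not in the logical skeleton, which is essentially inherited, but in making precise that the crest construction of Theorem~\ref{thmCrest}—proven there for a single pyramid with a fixed apex—composes correctly through the g-dome partition: after Theorem~\ref{pyra-dome} slices a pyramid off a g-dome, that pyramid is genuinely a pyramid in the sense required by Section~\ref{secCrests} (convex polygonal base lying in the slice plane, single apex off that plane), so Theorem~\ref{thmCrest} applies directly; one should note that the constant-size ``clean-up'' pyramids of Algorithm~2 are likewise genuine pyramids and contribute only $O(1)$ each. A secondary point worth a sentence is that crest-tailoring, like digon-tailoring, is only applied to flatten pyramids to their bases and never needs the extrinsic 3D coordinates of the intermediate AGT polyhedra—all crest computations run on the intrinsic layout $\bar L$—so the algorithm is well-defined despite the absence of an effective reconstruction procedure for AGT. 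With these observations in place the theorem follows, and I would close by remarking that it is a genuine counterpart to Theorem~\ref{main}: both reshape $P$ to any inscribed homothet of $Q$ in $O(n^4)$ time, one by digon excisions and one by crest excisions.
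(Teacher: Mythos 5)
Your proposal matches the paper's own proof: both reduce to the chain sculpting slice $\to$ g-domes $\to$ pyramids (via Theorems~\ref{general_slice} and~\ref{thmDomePyr}) and then replace the $k-1$ digon removals per pyramid with a single crest removal supplied by Theorem~\ref{thmCrest}, reusing Algorithms~1--2 and substituting an $O(n^2)$ crest computation for Algorithm~3 to get $O(n^4)$. (One small attribution slip: the slice-to-g-dome partition is carried out in the proof of Theorem~\ref{general_slice}, not by Theorem~\ref{pyra-dome}, which is the g-dome-to-pyramids result you cite correctly in the next clause.)
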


\begin{proof}
The lemmas leading to Theorem~\ref{main} 
established that ultimately we need to tailor single vertex truncations, i.e., tailor pyramids.
So the claim follows from Theorem~\ref{thmCrest}.
\end{proof}

\paragraph{Complexity Analysis.}
Let $v$ have degree-$k$ with $k=O(n)$.
The overlays of the planar layouts shown in Figs.~\ref{PyHexV_1} and~\ref{PyHexV_2} can be constructed in linear time.
To compute the points along the edges $x_i v$ at which the geodesic triangle edges cross will cost $O(n)$ per geodesic.
To clip the lifted triangle to $X$ naively costs $O(n)$ per triangle,
but with some care we believe this could be accomplished also in amortized linear time overall.
Nevertheless the total cost of computing the crest $\chi$ is $O(n^2)$,
and we believe there are examples with total combinatorial complexity 
(number of geodesic/edge intersections) of $\Omega(n^2)$.

Note that $O(n^2)$ is the same complexity for reducing $P$ to $X$ via digon-tailoring described in Section~\ref{Tailor_1}.

We are assuming that the combinatorial complexity of the crest $\chi$ on $L$
determines the time complexity of computing the crest.
However, a crest flattened to the plane, not overlayed on $\bar{L}$, has
combinatorial complexity $O(n)$---$k$ (possibly clipped) triangles---and can be constructed in $O(n)$ time.
There are circumstances where implicit representations of 
shortest paths suffice for subsequent uses, such as supporting queries. A prominent example is
the optimal algorithm for shortest paths on a convex polyhedron~\cite{SchreiberSharir}.
An implicit representation of $\chi$ can be constructed in linear time.
In contrast, digon-tailoring to flatten $L$ to $X$ appears to require quadratic time even for 
an implicit representation.


\section{Surface enlarging and reshaping}
\label{enlarging}

In this short section we consider first the problem of enlarging surfaces, 
the opposite in some sense to our goal in previous sections.
Formally, consider convex polyhedra $P$ and $Q \subset P$.
We show how to locally cut open $Q$ and insert surface pieces into the cuts to obtain $P$.

Previous sections have established three methods of tailoring $P$ to $Q$:
\begin{itemize}
\squeezelist
\item Theorem~\ref{main} digon-tailors according to a sculpting of $P$ to $Q$.
\item Theorem~\ref{main-} digon-tailors, without sculpting, $P$ to a (possibly small) homothet of $Q$.
\item Theorem~\ref{thmCrestTailoring} crest-tailors according to a sculpting of $P$ to $Q$.
\end{itemize}
For the two digon-tailorings,
we tailor $P$ to $Q$ and afterward reverse the process.
During the tailoring process, we keep track of the cuts and digons removed.
Then, starting with $Q$, we cut each geodesic and insert the corresponding digon,
in reverse order.
For crest-tailoring, the process is the same, except that we cut the boundary of crests
and insert the removed crests in reverse order.

These, together with the corresponding algorithm analyses, lead to 
the next result.

\begin{thm}
\label{Enlarge_Algorithm1}
Given convex polyhedra $P$ and $Q$ of at most $n$ vertices each, and $Q \subset P$,
$Q$ can be enlarged to $P$, following any of the above three processes, in time $O(n^4)$.
\end{thm}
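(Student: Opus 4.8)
The plan is to observe that Theorem~\ref{Enlarge_Algorithm1} is essentially a corollary of the three tailoring constructions already established, run in reverse, together with the three complexity analyses (Theorems~\ref{SliceAlgorithm}, \ref{thmAlgorithmNoSculpt}, \ref{thmCrestTailoring}). So the proof is a bookkeeping argument: each tailoring step is invertible, the inverse of a tailoring sequence is an enlarging sequence, and reversing does not change the asymptotic cost. First I would fix one of the three tailoring processes applied to the pair $P \supset Q$ (after, if necessary, a dilation bringing a homothet of $Q$ inside $P$), and record the ordered list of elementary operations it produces: for digon-tailoring, a sequence of digons $D_1, D_2, \ldots, D_N$ together with the geodesic arcs along which each is sutured; for crest-tailoring, a sequence of crests $\chi_1, \ldots, \chi_N$ with their boundary identifications. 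By Theorems~\ref{main}, \ref{main-}, and~\ref{thmCrestTailoring}, such a sequence exists and transforms $P$ into $Q$ (or a homothet), and by the cited algorithmic theorems it has total length $N = O(n^4)$ and is computable within that bound.

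Next I would spell out invertibility of a single step. Excising a digon $D_i = (x_i, y_i)$ and suturing the two equal-length geodesics amounts, on the intrinsic surface, to deleting a planar polygon and naturally identifying a pair of equal-length sides; the inverse operation cuts the surface open along the image of that identification (a geodesic segment with a marked point, the ``scar'' $\bar{\sigma}_i$) and re-inserts an isometric copy of the excised digon, re-gluing along the original digon-boundary geodesics. Since $D_i$ had at most one vertex in its interior and the suturing left $\le 2\pi$ total angle everywhere, the re-insertion also glues $\le 2\pi$ angle at every point, so AGT applies and yields a convex polyhedron again; this reconstructs exactly $P_{i-1}$ from $P_i$ because the gluing data uniquely determine the surface (AGT uniqueness). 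The same remark applies verbatim to crests, cutting along $\partial\chi_i$ and re-inserting $\chi_i$. Hence running the recorded list in reverse order, starting from $Q$, and at step $N - i + 1$ cutting open and re-inserting the patch removed at the original step $i$, produces the chain $Q = P_N \to P_{N-1} \to \cdots \to P_0 = P$, which is precisely a sequence of local cut-and-insert enlargements of $Q$ into $P$.

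Finally I would address the complexity. No new geometric computation is needed in the reverse pass: the locations of the scars $\bar{\sigma}_i$ (respectively the clipped crest boundaries), the patch isometry types, and the gluing identifications were all computed during the forward tailoring, whose cost the earlier theorems bound by $O(n^4)$. Reversing a list of $O(n^4)$ operations and performing each insertion (each the reverse of a constant- or linear-cost forward step, totalling what the forward analysis already counted) costs $O(n^4)$. Therefore $Q$ can be enlarged to $P$ by any of the three processes in time $O(n^4)$, as claimed. The main obstacle — really the only non-routine point — is making precise that the inverse of ``excise a digon/crest and suture'' is again a legitimate, AGT-governed gluing step producing the previous polyhedron: one must check that re-inserting the patch never glues more than $2\pi$ at a point (which holds because the forward suturing created points of angle $\le 2\pi$, and un-suturing then re-adding the original angle restores the original $\le 2\pi$ configuration) and invoke AGT uniqueness to conclude the reconstructed polyhedron is exactly $P_{i-1}$ rather than merely some polyhedron with the same intrinsic metric. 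Everything else is recordkeeping already carried out in Sections~\ref{secTailor_1}--\ref{secCrests}.
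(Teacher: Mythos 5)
Your proof is correct and follows essentially the same approach as the paper: run the chosen tailoring process forward from $P$ to $Q$, record the sequence of excised patches and suture arcs, then reverse by cutting along each scar and re-inserting the corresponding patch, with the cost dominated by the already-bounded $O(n^4)$ forward computation. Your explicit verification that each re-insertion again satisfies the $\le 2\pi$ angle condition of AGT, with uniqueness then forcing the reconstructed surface to coincide with $P_{i-1}$, is a justified elaboration of a step the paper leaves implicit.
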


\noindent
We also obtain an analogue of Corollary \ref{approx_tailoring}:

\begin{co}
\label{approx_enlarging}
For any convex polyhedron $Q$ and any convex surface $S \supset Q$, one can enlarge $Q$ to approximate $S$.
\end{co}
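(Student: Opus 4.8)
The plan is to mirror the proof of Corollary~\ref{approx_tailoring}, but running the surface-enlarging machinery of Theorem~\ref{Enlarge_Algorithm1} in place of tailoring. Let $K$ be the convex body whose boundary is $S$, so that by hypothesis $Q \subseteq K$.

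First I would construct an auxiliary convex polyhedron $P$ with $Q \subseteq K \subseteq P$ and with $\partial P$ as close to $S$ as desired. Fix $\varepsilon>0$, choose finitely many points $s_1,\ldots,s_m\in S$, and for each $s_j$ a supporting plane $\Pi_j$ of $K$ at $s_j$; let $H_j$ be the closed half-space bounded by $\Pi_j$ that contains $K$, and set $P=\bigcap_{j=1}^m H_j$. Since $K\subseteq H_j$ for every $j$, we have $K\subseteq P$, hence $Q\subseteq P$; and provided the outer normals of the $\Pi_j$ positively span $\R^3$, $P$ is a bounded convex polyhedron. It is a classical fact that, by taking the $s_j$ to form a sufficiently dense subset of $S$, the Hausdorff distance between $\partial P$ and $S$ can be made smaller than $\varepsilon$. (If a face or vertex of $Q$ happens to lie on $S$, a supporting plane of $K$ at that contact point still leaves $Q$ weakly on its inner side, so the inclusion $Q\subseteq P$ is not affected.)

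Next I would invoke Theorem~\ref{Enlarge_Algorithm1} with this $P$ and the given $Q\subseteq P$: it enlarges $Q$ to $P$ by cutting $Q$ open and inserting the appropriate surface patches (digons or crests), following any of the three tailoring-reversal processes. The result is exactly $P$, which by construction approximates $S$ within $\varepsilon$ in the Hausdorff metric. Since $\varepsilon>0$ was arbitrary, the corollary follows.

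I expect no genuine obstacle here beyond bookkeeping: the only step needing (routine) care is the approximation claim — that circumscribed polytopes assembled from a dense family of tangent planes converge to $S$ in the Hausdorff metric, and that such a polytope always contains the fixed $Q$. Both are standard facts about convex bodies and do not interact with the intrinsic-geometry subtleties (cut loci, Cauchy's arm lemma, rigidity) that drove the earlier sections; the real content has already been packaged into Theorem~\ref{Enlarge_Algorithm1}.
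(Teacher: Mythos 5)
Your argument is correct and is exactly the intended one: the paper merely states this corollary as an ``analogue of Corollary~\ref{approx_tailoring}'' without spelling it out, and mirroring that proof — circumscribe a polyhedron $P \supseteq K \supseteq Q$ from finitely many supporting planes of $S$, make $P$ Hausdorff-close to $S$ by densifying the tangency points, then invoke Theorem~\ref{Enlarge_Algorithm1} to enlarge $Q$ to $P$ — is precisely what is meant. Your parenthetical on $Q$ touching $S$ is a reasonable point of care but harmless, since supporting half-spaces of $K$ always contain $K \supseteq Q$.
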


We are now in the position to solve the general problem: for any convex polyhedra $P$ and $Q$, reshape $Q$ to $P$ by tailoring and enlarging.
The solution is, at this point, obvious: enlarge $Q$ sufficiently to include $P$, and afterward tailor it. For completeness, we state next the formal results.

\begin{thm}
\label{reshape}
Given convex polyhedra $P$ and $Q$ of at most $n$ vertices each,
one can reshape $Q$ to $P$, following any of the above three processes and their reverse processes, in time $O(n^4)$.
\end{thm}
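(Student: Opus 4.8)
The plan is to realize the reshaping as the composition of an enlargement followed by a tailoring, precisely as the paragraph before the statement foreshadows: first blow $Q$ up to a convex polyhedron big enough to contain $P$, then tailor that polyhedron back down to $P$. Concretely, I would fix a homothety $h$ of ratio $\lambda>1$ centered at an interior point of $Q$, with $\lambda$ so large that, after a suitable rigid motion, $P$ lies in the interior of $\hat{Q}:=h(Q)$; such a $\lambda$ exists because $\hat{Q}$ contains a ball whose radius grows linearly in $\lambda$. Then $Q\subset\hat{Q}$ with $|\hat{Q}|=|Q|\le n$, so $(Q,\hat{Q})$ meets the hypotheses of Theorem~\ref{Enlarge_Algorithm1}; and $P\subset\hat{Q}$ with $|P|,|\hat{Q}|\le n$, so $(P,\hat{Q})$ meets the hypotheses of Theorem~\ref{main} (equivalently Theorem~\ref{thmCrestTailoring}).

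The first step is to invoke Theorem~\ref{Enlarge_Algorithm1} to enlarge $Q$ to $\hat{Q}$: this is the reverse of one of the three tailoring schemes, records a sequence of cuts-and-insertions of surface patches, and runs in $O(n^4)$ time, yielding a polyhedron isometric to the homothet $\hat{Q}$ of $Q$ that we have arranged to contain $P$. The second step is to apply Theorem~\ref{main}, or Theorem~\ref{thmCrestTailoring} if crest-tailoring is preferred, to tailor $\hat{Q}$ down to the copy of $P$ inside it (which is itself a homothetic copy of $P$, of ratio $1$), again in $O(n^4)$ time. Composing the two phases reshapes $Q$ to $P$ in $O(n^4)+O(n^4)=O(n^4)$ time, using one forward process together with one reverse process; since nothing in the argument couples the two halves, these may be taken to be the reverse and the forward version of the same one of the three schemes, or mixed freely.

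The points requiring care are bookkeeping rather than real obstacles. First, every intermediate polyhedron must have $O(n)$ vertices: this holds because a homothet has the same vertex count as its original, and because each of Theorems~\ref{Enlarge_Algorithm1},~\ref{main},~\ref{thmCrestTailoring} introduces only $O(n)$ new vertices overall. Second, and this is the one subtlety I expect a careful reader to want spelled out, the target of the enlargement must be specified in advance, so I would take it to be the scaled copy $\hat{Q}$ of $Q$ --- chosen independently of where $P$ sits --- and only afterwards move $P$ rigidly into $\hat{Q}$; this keeps Theorem~\ref{Enlarge_Algorithm1} applicable verbatim and makes the final output a genuine (rigid-motion) copy of $P$. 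Finally, if one reads ``reshape $Q$ to $P$'' as only asking for a homothet of $P$, the enlargement phase can be dropped entirely and one tailors $Q$ directly to a sufficiently small homothet $P^{-}\subset Q$ of $P$ via Theorem~\ref{main}; but the enlarge-then-tailor route is what both produces $P$ at its given size and exercises a process together with its reverse, as the statement requests.
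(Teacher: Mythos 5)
Your proposal matches the paper's argument exactly: the paper's own proof is the one-line remark preceding the theorem, ``enlarge $Q$ sufficiently to include $P$, and afterward tailor it,'' which is precisely your enlarge-$Q$-to-$\hat{Q}\supset P$, then tailor-$\hat{Q}$-to-$P$ composition with the same $O(n^4)+O(n^4)$ accounting. The additional care you take (fixing a homothety ratio $\lambda$ so a ball of linearly growing radius sits inside $\hat{Q}$, then rigidly placing $P$ there) is a correct and slightly more explicit rendering of the same step the paper leaves implicit.
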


\noindent
In particular, this result extends Thm.~\ref{main-} to ``any'' $Q$ inside $P$.

\begin{co}
\label{approx_reshaping}
For any convex polyhedron $Q$ and any convex surface $S$, one can reshape $Q$ as in Theorem~\ref{reshape} to approximate $S$.
\end{co}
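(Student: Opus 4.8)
The plan is to reduce Corollary~\ref{approx_reshaping} to Theorem~\ref{reshape}, in exactly the way Corollary~\ref{approx_tailoring} was reduced to Theorem~\ref{main} and Corollary~\ref{approx_enlarging} to the enlarging theorem. Fix the target convex surface $S$ and a tolerance $\varepsilon>0$. First I would choose a convex polyhedron $P=P_\varepsilon$ that approximates $S$ within $\varepsilon$: take $P$ to be the convex hull of a finite $\varepsilon$-dense subset of $S$, so that $P$ lies inside the convex body bounded by $S$ and every point of $S$ is within $\varepsilon$ of $P$; by convexity the two boundary surfaces are then Hausdorff-$\varepsilon$-close. This is the same device already used in the proof of Corollary~\ref{approx_tailoring}. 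Then I would invoke Theorem~\ref{reshape} with this $P$ and the given $Q$: it guarantees that $Q$ can be reshaped, via enlarging followed by tailoring (in any of the three variants), into a polyhedron congruent to $P$.

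The point that makes this immediate — as opposed to the earlier corollaries, which only reached \emph{homothets} of the target — is that reshaping is not area-monotone. Following the proof idea of Theorem~\ref{reshape}, one first enlarges $Q$ (reversing tailoring, inserting surface patches) until the enlarged surface contains $P$, and then tailors that surface down to $P$ itself. Hence $Q$ is reshaped to $P$ at the true scale of $S$, not merely to a scaled copy. Since the resulting surface equals $P=P_\varepsilon$, which lies within $\varepsilon$ of $S$, the reshaped $Q$ approximates $S$ within $\varepsilon$; letting $\varepsilon\to 0$ yields the corollary. Running the argument through the crest-tailoring route of Theorem~\ref{thmCrestTailoring} instead of the digon-tailoring routes of Theorems~\ref{main} and~\ref{main-} gives the same conclusion, which is why the statement reads ``as in Theorem~\ref{reshape}''.

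I do not expect a genuine obstacle here: the entire content is packaged in Theorem~\ref{reshape} together with the classical fact that a convex body is approximable in Hausdorff distance by inscribed convex polytopes. The two small things worth stating carefully are (i) the meaning of ``approximate'' — I would take it to be smallness of the Hausdorff distance between the boundary surfaces (equivalently, between the corresponding convex bodies), consistent with Corollaries~\ref{approx_tailoring} and~\ref{approx_enlarging} — and (ii) that no uniform running-time estimate is asserted, since the vertex count of $P_\varepsilon$ must grow as $\varepsilon\to 0$, so the $O(n^4)$ bound of Theorem~\ref{reshape} applies only to each fixed approximating polyhedron.
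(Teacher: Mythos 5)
Your reduction is correct and is exactly the argument the paper intends: approximate $S$ in Hausdorff distance by a convex polyhedron $P_\varepsilon$, apply Theorem~\ref{reshape} to reshape $Q$ to $P_\varepsilon$ at full scale, and let $\varepsilon\to 0$. The paper leaves this corollary without an explicit proof for precisely this reason, as it is immediate once Theorem~\ref{reshape} is in hand; the only cosmetic difference is that the analogous Corollary~\ref{approx_tailoring} in the paper approximates $S$ by tangent-plane slices (a circumscribed polyhedron) rather than by the convex hull of an $\varepsilon$-net (an inscribed one), but either choice works and changes nothing. One small inaccuracy in your framing: Corollary~\ref{approx_enlarging} also reaches $S$ itself rather than a mere homothet (since it assumes $S\supset Q$), so the ``earlier corollaries only reached homothets'' remark applies only to Corollary~\ref{approx_tailoring}.
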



\section{$P$-unfoldings}
\label{P-unfoldings}

As a direct consequence of Subsection~\ref{enlarging}, we can answers positively the following problem, apparently not previously explored.

Given convex polyhedra $P$ and $Q \subset P$, can one cut-up the surface $Q$ so that the pieces may be pasted onto $P$, non-overlapping,
and so form an isometric subset of $P$? 
Call this a \emph{$P$-unfolding} of $Q$, or an unfolding of $Q$ to the surface $P$.
The use of the word ``unfolding" here is intentionally suggestive, but note that planar unfoldings of polyhedra are generally connected.

\begin{thm}
\label{thmP-unf}
Given convex polyhedra $P$ and $Q$ of at most $n$ vertices each, and $Q \subset P$, a $P$-unfolding of $Q$ can be determined in time $O(n^4)$, following 
Theorem~\ref{Enlarge_Algorithm1}.
\end{thm}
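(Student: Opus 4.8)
The plan is to read the $P$-unfolding off directly from the enlargement procedure of Theorem~\ref{Enlarge_Algorithm1}, without doing any new geometry. Recall how that procedure works: one first tailors $P$ down to $Q$ (by Theorem~\ref{main}, \ref{main-}, or~\ref{thmCrestTailoring}), recording every geodesic cut and every excised digon or crest; then, starting from $Q$, one reinserts those patches in reverse order, each time slitting the current surface along the recorded geodesic and gluing the patch back in, until $P$ is rebuilt. The key observation is that one reinsertion step takes a convex polyhedron $R$, cuts it open along a geodesic $\g$, and glues in a patch $\Pi$ along $\g$ to produce $R'$; away from $\g$ this is an isometric embedding $R\setminus\g\hookrightarrow R'$ whose image is exactly $R'\setminus\inn\Pi$, the inserted open patch $\inn\Pi$ accounting for the rest of $R'$.

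Composing all the reinsertion steps gives an isometry $\iota$ from $Q$, cut along a certain geodesic graph, onto a subset of $P$. Concretely, the geodesics along which $Q$ gets slit during the reinsertions assemble into a finite geodesic graph $\Gamma_Q\subset Q$ --- in the digon case this is precisely the final seal graph (the scars left on $Q$ by the tailoring of $P$), and in the crest case the union of the final crest boundaries. Cutting $Q$ along $\Gamma_Q$ yields finitely many closed pieces $Q_1,\dots,Q_m$. The composed embedding maps each $Q_j$ isometrically into $P$, and because $\iota$ is injective off $\Gamma_Q$, the images $\iota(Q_1),\dots,\iota(Q_m)$ have pairwise disjoint interiors. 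Hence they form a non-overlapping isometric copy of the cut-up surface of $Q$ inside $P$; this is the desired $P$-unfolding. As a bonus, $P$ is then tiled by this $P$-unfolding of $Q$ together with the images of the reinserted patches.

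For the complexity, all the needed combinatorial data --- the list of cuts, the patches, and therefore $\Gamma_Q$ as an arrangement of polynomially many geodesic segments on $Q$ --- is generated while running the algorithm behind Theorem~\ref{Enlarge_Algorithm1}, whose cost is $O(n^4)$; building and outputting the arrangement $\Gamma_Q$ and its face structure is dominated by this. The one step I expect to require genuine care is verifying that $\Gamma_Q$ is in fact a \emph{finite} geodesic graph: a cut performed at a late stage of the forward tailoring may run across patches and scars created at earlier stages, so one must track which portion of each such cut ultimately survives on $Q$ and check that this portion, pulled back through the finitely many intervening piecewise-isometric maps, is still a finite concatenation of geodesic arcs. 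This follows from the finiteness of the step sequence in Theorem~\ref{Enlarge_Algorithm1} together with the polyhedral nature of each intermediate surface, but it is the place where the argument must be made precise.
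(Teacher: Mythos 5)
Your proposal is correct and takes essentially the same approach as the paper: the paper's proof is simply ``enlarge $Q$ to $P$, then remove all inserted digons or crests; the result is a subset of $P$ isometric to the cut-up $Q$,'' which is exactly your composed-reinsertion argument read in reverse. You give more detail (and rightly flag that finiteness of the cut graph $\Gamma_Q$ deserves care, a point the paper leaves implicit), but the underlying idea and the $O(n^4)$ bound inherited from Theorem~\ref{Enlarge_Algorithm1} are the same.
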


\begin{proof}
Just enlarge $Q$ to $P$, and then remove all inserted digons or crests.
The result is a subset of $P$ isometric to the cut-up $Q$.
\end{proof}

To justify the use of ``unfolding," here is an example of a connected $P$-unfolding that is
not a planar \emph{net}, i.e., a non-overlapping simple, planar polygon.
Let $Q$ be the classical thin, nearly flat tetrahedron with an overlapping edge-unfolding.
See, e.g., \cite[Fig.~22.8, p.~314]{DOR}.
Take $P$ to be a slightly larger homothet of $Q$.
Then the same edge-cuts embed $Q$ onto $P$ without overlap.

As with the ``fewest nets" unfolding problem~\cite[Prob.~22.1, p.~308]{DOR}, there could be interest in minimizing the number of disconnected pieces of a $P$-unfolding.
We believe the $P$-unfolding question could also be answered by applying the Wallace-Bolyai-Gerwien dissection theorem.
However, this will result in a ``pseudopolynomial number of pieces"~\cite{abbott2012hinged}.\footnote{
``pseudopolynomial means polynomial in the combinatorial complexity ($n$) and the dimensions of an integer grid on which the input is drawn."}
A future task is exploring bounds on the number of pieces achieved by Theorem~\ref{thmP-unf}.

\medskip

Simple examples show that the $P$-unfolding of $Q$ produced by Theorem~\ref{thmP-unf} is not necessarily simply connected, i.e., without holes.
However, in general, that is indeed the case, as shown by the following result. The next paragraph explains the meaning of ``in general.''

Consider the space ${\cal S}$ of all convex surfaces, endowed with the topology induced by the usual Pompeiu-Hausdorff metric.
Fix some $P \in {\cal S}$.
Consider in ${\cal S}$ the subset ${\cal P}={\cal P}_P^n$ of all polyhedra $Q\subset P$ with 
precisely $n$ vertices, with the induced topology.
Two polyhedra in ${\cal P}$ are then close to each other if and only if they have close respective vertices.
``General'' refers to polyhedra $Q$ in an open and dense subset of ${\cal P}$.

\begin{thm}
\label{thmQPflat}
For any convex polyhedron $P$ and any $n \in \N$, there exists a subset ${\cal Q}={\cal Q}_P^n$ open and dense in ${\cal P}$, 
such that the $P$-unfolding $Q_P$ of each $Q \in {\cal Q}$ is flat (i.e., contains no internal vertices), and is simply connected.
\end{thm}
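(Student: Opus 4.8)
The plan is to show that the properties ``$Q_P$ is flat'' and ``$Q_P$ is simply connected'' each hold on an open dense subset of ${\cal P}$, and then intersect the two. Recall that, by Theorem~\ref{Enlarge_Algorithm1} and the construction underlying Theorem~\ref{thmP-unf}, the $P$-unfolding is obtained by reversing a tailoring of $P$ to $Q$: one tailors $P$ down to $Q$ (via Theorem~\ref{main} or Theorem~\ref{thmCrestTailoring}, tracking a sculpting of $P$ to $Q$), records the excised digons (or crests), and then reinserts them. Thus $Q_P$ is $P$ with a finite union of geodesic slits cut open and surface patches removed; equivalently it is the subset of $P$ left uncovered by the patches. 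The internal vertices of $Q_P$, and any holes, arise only from ``coincidences'' in this construction: the sculpting planes of $Q$ passing through vertices of $P$, several cut loci or digon endpoints colliding, generic-point choices failing, ramification points of cut loci landing on slits, etc. Each such coincidence is an algebraic condition on the vertex coordinates of $Q$ (with $P$ fixed), hence defines a closed nowhere-dense subset of the finite-dimensional manifold ${\cal P}={\cal P}_P^n$ (whose points are parametrized by the $3n$ coordinates of the vertices of $Q$, subject to $Q\subset P$ and $|Q|=n$, an open condition).

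First I would make precise what ``flat'' requires. An internal vertex of $Q_P$ comes from a point of positive curvature created by a suturing that does not lie at a vertex of $Q$; tracing through the digon-tailoring (Lemmas~\ref{vertex_small}--\ref{VertexTruncation}) or crest-tailoring (Theorem~\ref{thmCrest}) construction, the reinserted patches fit together exactly along the sculpting faces of $Q$ provided (i) no face-plane of $Q$ contains a vertex of $P$, (ii) each chosen star-unfolding base point $x$ (and its images under $\iota$) is generic in the sense of Theorem~\ref{rigid} and Lemma~\ref{*unfCL}, and (iii) the finitely many digon endpoints and ramification points used in the recursion are in ``general position'' relative to one another and to the slits. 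Conditions (i)--(iii) are each the complement of a finite union of proper real-algebraic subvarieties of ${\cal P}$, so their conjunction defines an open dense set ${\cal Q}_1\subset{\cal P}$; on ${\cal Q}_1$ the patches tile their union with no internal vertex, i.e.\ $Q_P$ is flat.

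Next, for simple connectivity: a hole in $Q_P$ would be a bounded component of $P\setminus(\text{reinserted patches})$, equivalently a region of $P$ covered by the patches but not adjacent to $Q$'s copy. Because the patches are reinserted in reverse tailoring order and each one is attached along a geodesic (digon seam) or a crest boundary that was cut open \emph{from} the then-current surface, the union of patches is built up by attaching each new patch along a connected arc of the boundary of the previously-built region; a standard induction (attaching a disk along a connected boundary arc keeps the complement connected and its closure simply connected) shows no hole is created \emph{provided} no two successive seams are disjoint in a way that pinches off a region — again a non-degeneracy condition. I would argue that this pinching can only happen on a lower-dimensional subvariety: it requires two seams, or a seam and a face-edge of $Q$, to become tangent or to cross at a prescribed combinatorial pattern, which is one more algebraic equality on the vertices of $Q$. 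Hence there is an open dense ${\cal Q}_2\subset{\cal P}$ on which $Q_P$ is simply connected. Setting ${\cal Q}={\cal Q}_P^n={\cal Q}_1\cap{\cal Q}_2$, which is open and dense as a finite intersection of open dense sets in the (locally Euclidean, hence Baire) space ${\cal P}$, gives the theorem.

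The main obstacle is the simple-connectivity step: flatness is essentially a statement that the reinsertion is ``exact,'' which the earlier lemmas already nearly deliver once degeneracies are excluded, but ruling out holes requires understanding the \emph{global} incidence pattern of all the seams/crest-boundaries laid onto $P$ simultaneously, and showing that the only way a bounded complementary component appears is through a codimension-$\ge 1$ coincidence. I would handle this by fixing the combinatorial type of the whole sculpting-plus-tailoring construction (which is locally constant on an open dense set, since it only changes when some polynomial inequality degenerates to an equality), and within each such open cell verifying directly that the patch-union is a topological disk with the $Q$-copy on its boundary; the boundary between cells is where holes might appear, and that boundary is the requisite nowhere-dense ``bad'' set. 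Verifying the disk property cell-by-cell is the genuinely technical part, but it is a finite combinatorial check rather than a new geometric idea.
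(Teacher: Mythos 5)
Your high‑level strategy — perturb $Q$ to avoid a codimension‑$\ge 1$ set of degeneracies, then take the resulting open dense complement — is the same as the paper's. But the mechanism you invoke for identifying the degeneracies is quite different from the paper's, and as written it has a genuine gap: you never actually show that your genericity conditions preclude internal vertices or holes.

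The key observation the paper uses, which is missing from your proposal, is that both obstructions are forced to be \emph{curvature coincidences}, independent of any choices in the tailoring/enlarging construction. If $Q_P\subset P$ has an internal vertex $v$, then $v$ is a point of positive curvature interior to an isometric copy of (cut‑up) $Q$ inside $P$, so $v$ must simultaneously be a vertex of $P$ and the image of a vertex $w$ of $Q$ with $\o_Q(w)=\o_P(v)$. And if $Q_P$ has a noncontractible curve $\s\subset P\cap Q_P$, then one Gauss--Bonnet computation on each side of $\s$ (the isometry preserves the geodesic curvature of $\s$) forces the total curvature enclosed on $P$ to equal the total curvature enclosed on $Q$, i.e.\ a partial sum of vertex curvatures of $P$ equals a partial sum of vertex curvatures of $Q$. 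Both are single polynomial equalities in the vertex coordinates of $Q$, so the paper simply perturbs $Q$ to satisfy the property that no vertex curvature (respectively, no partial sum of vertex curvatures) of $Q$ equals any vertex curvature (respectively, any partial sum) of $P$, and this is open and dense in ${\cal P}$.

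By contrast, your conditions (i)--(iii) for flatness are facts about the three‑dimensional sculpting configuration (face planes of $Q$ not through vertices of $P$, generic star‑unfolding base point, ramification points in general position). These are plausible genericity conditions, but you do not derive from them the statement that no internal vertex appears, and it is not clear that they do: nothing in (i)--(iii) by itself prevents some vertex of $Q$ from being carried onto a vertex of $P$ of equal curvature. Similarly, your argument for simple connectivity appeals to an induction about attaching patches ``along connected boundary arcs'' and to avoiding ``pinching,'' but this is a heuristic about the combinatorics of the seam layout on $P$, not a proof; the global topology of the patch union on $P$ is exactly the hard part, and you defer it to an unspecified ``finite combinatorial check'' over cells. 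The intrinsic Gauss--Bonnet argument bypasses this entirely: no case analysis over tailoring schedules or combinatorial types is needed, because the obstruction is an equality between curvature sums that does not depend on how $P$ was tailored to $Q$. In short, the architecture of your proposal is sound, but you picked the wrong invariants; the paper's curvature equalities are the correct and much simpler degeneracy conditions.
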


\begin{proof}
Assume we have some convex polyhedron $Q \subset P$ such that $Q_P$ contains an internal vertex $v$, and so the curvatures of $P$ and $Q$ at $v$ are equal: $\o_Q(v)=\o_P(v)$.
Slightly alter the position of the vertices of $Q$, to get $\o_Q(w) \neq \o_P(u)$, for any vertices $w \in Q$ and $u \in P$.
Of course, this remains valid in a small neighborhood of the new $Q$.

Assume now we have some convex polyhedron $Q \subset P$ such that $Q_P$ is not simply connected, 
i.e., $Q_P$ contains a noncontractible curve $\s \subset P \cap Q_P$.
The Gauss-Bonnet Theorem shows that the total curvature $\Omega_Q (\s)$ of $Q_P$ inside $\s$ equals the total curvature $\Omega_P (\s)$ of $P$ inside $\s$: $\Omega_Q (\s) = \Omega_P (\s)$. We next show that every such $Q$ that violates the theorem can be approximated with polyhedra that do satisfy the theorem.

Slightly alter the position of the vertices of $Q$, to get a new polyhedron $Q'$ on which the following property (V) is verified. 
(V): any partial sum of vertex curvatures is different from any partial sum of vertex curvatures in $P$. 
This implies that,  for any simple closed curve $\t$ on $Q'$, $\Omega_{Q'} (\t)$ cannot be written as the sum of vertex curvatures of $P$.
Therefore, $Q'_P$ has no curve in common with $P$, noncontractible in $Q'_P$. And so $Q'$ does satisfy the theorem.

Since the property (V) is valid on a neighborhood $N$ of $Q'$, it follows that all polyhedra in $N$ do satisfy the theorem.
\end{proof}


\section{Remarks and Open Problems}
\label{Open}

Our work leaves open several questions of various natures, some of which  have been mentioned in the text.
Others are presented in this section, accompanied with related remarks.

\medskip

{\bf 1.} 
Cutting $P$ with no restrictions and zipping closed to obtain a homothetic copy of $Q$ can be easily solved as follows.
Unfold $Q$ to the star-unfolding $S_Q$ with respect to some point $p$, shrink $S_Q$ to fit inside a face $F$ of $P$, cut out $S_Q$ from $F$, refold and glue.
(This bears some similarity with Theorem~\ref{thmTailorNoSculpting}.)
Possibly more efficient in terms of surface area is an interesting suggestion of Jin-ichi Itoh~\cite{I}: 
first star-unfold $P$ to $S_P$, shrink $S_Q$ to fit inside $S_P$, and
then cut out $S_Q$ from $S_P$.
Clearly the same strategy could be followed for any non-overlapping unfoldings of $P$ and $Q$, but {\it completing the details might be quite challenging}.

\medskip

{\bf 2.} 
We have presented three methods for tailoring, of very different flavours.
On one hand, the method of tailoring with sculpting given in Sections~\ref{domes}--\ref{finer} seems appropriate for local tailoring, and it can produce any $Q$ inscribed in $P$.
This is true for either digon-tailoring or crest-tailoring.

On the other hand, the method of tailoring without sculpting given in Section~\ref{no_sculpt} is purely intrinsic, in that it doesn't need the spatial structure of $P$ and $Q$ to work. 
The surfaces can be given, in this case, as a collection of polygons glued together as in AGT.
But it has the disadvantage of being ``non-economical,'' in the sense that it ``discards'' a lot of 
$P$'s surface area; see Problem~$3$ below. 

All three methods can be reversed to enlarge surfaces, where the results are the same, but not-requiring the spatial structure might be a clear advantage.

\medskip

{\bf 3.}
We do not know if any of our algorithm complexities are best possible. Perhaps several could be improved, or lower bounds established.
Especially notable is the problem of {\it finding a cut-locus generic point $x$ on a convex polyhedron of $n$ vertices in less than $O(n^4)$ time}.
See Theorem~\ref{thmAlgorithmNoSculpt}.

\medskip

{\bf 4.} 
In the class of convex surfaces, the lower bound for the ratio of area to diameter is $0$, because line-segments can be approximated by convex bodies.
Therefore, one could necessarily lose almost all surface area in the process of tailoring.
Also, in order to approximate by tailoring a sphere inscribed in a very long convex surface, one 
is forced to lose almost all surface area.

{\it What is the minimum amount of surface area one has to discard, in order to approximate a sphere of diameter $1$ inscribed in a convex body $K$ of diameter $d>1$, by tailoring $K$?}

\medskip

{\bf 5.} 
Consider a convex polyhedron $P$, mark a face $X$ of it as a base, and let $D$ be a fixed g-dome over $X$, interior to $P$.
In view of Theorem~\ref{thmDomePyr}, one could ask the following question.

{\it Is it possible to partition $P$ into pyramids and $D$, with planes through the edges of $X$?
After each sectioning we move away the sliced pyramid.}

Our procedure described in Theorem~\ref{thmSliceGDomes} uses many $X$, one per slice, and many g-domes per $X$. Here we are asking
whether a single $X$ and a single g-dome might suffice.

The restriction to domes is necessary, otherwise the answer is {\sc no}, as simple examples can show.
The restriction to planes through base sides is also necessary,  otherwise the answer is {\sc yes}, implied by Theorem~\ref{thmSliceGDomes} and the proof of Theorem~\ref{thmSliceGDomes}.

\medskip

{\bf 6.} 
We have shown, with different methods, that the unfolding of $Q \subset P$ onto $P$ exists. Clearly, the $P$-unfolding depends on the order of tailoring operations.
The $P$-unfolding does not seem to be necessarily a connected subset of $P$.

{\it Is there some method and/or orderings of operations that would render 
a $P$-unfolding connected?}

Notice that Theorem~\ref{thmQPflat} established simply connectedness in general.
But simply connected does not imply connected, e.g., the union of several disjoint disks is a disconnected set but simply connected.

\medskip

{\bf 7.} 
It would be interesting to study tailoring for other classes of convex surfaces.

\setlength{\leftskip}{8mm}

\medskip
\noindent{\bf 7.1.}
Despite Corollary~\ref{approx_tailoring}, which shows that any convex surface can be
approximated as closely as desired, it does not seem possible to start with a strictly convex surface and tailor it to a polyhedron.
However, one can sculpt a surface to a polyhedron.

\medskip
\noindent{\bf 7.2.}
Tailoring spheres is limited to digons between antipodal points, and the only possible results are constant curvature ``spindles,'' but one could continue with tailoring spindles. 
It seems that at least a part of the present work could  apply to $1$-polyhedra.
These are polyhedra whose faces are (congruent to) geodesic polygons on the unit sphere. 
They can approximate convex surfaces with curvature bounded below by $1$
(in the sense of A.~D. Alexandrov), 
just as convex polyhedra can approximate ordinary 
convex surfaces~\cite{AZ67},~\cite{itoh2015moderate}.

\medskip
\noindent{\bf 7.3.}
One could also define tailoring for general (i.e., not necessarily convex) polyhedra, of arbitrary topology.
Clearly, a necessary condition for $Q$ to be tailored from a homothetic copy of $P$ is to have the same topology as $P$. 
Our Theorem~\ref{thmMainTailoring} might suggest this is also sufficient, but that is not true.
By Alexandrov's Gluing Theorem (AGT), tailoring a convex polyhedron always produces a convex polyhedron, 
never a nonconvex polyhedron homeomorphic to the sphere but having negative curvature at some vertex.
There is as yet no counterpart to AGT for nonconvex polyhedra.
Therefore, in the general framework, tailoring could be a much subtler topic.



\end{document}